\documentclass[12pt]{article}
\usepackage[a4paper,margin=1in]{geometry}

\usepackage{tikz-cd,amsmath,amsthm,amssymb,mathtools,stmaryrd}

\newcommand{\lowershriek}{_!}
\newcommand{\upperstar}{^{\raisebox{-0.25ex}[0ex][0ex]{\(\ast\)}}}

\def\overarrow#1{{\vec{#1}}}
\def\nondeg{\overarrow}

\newcommand{\Q}{\mathbb{Q}}

\newcommand{\Phieven}{\Phi_{\text{\rm even}}}
\newcommand{\Phiodd}{\Phi_{\text{\rm odd}}}
\renewcommand{\epsilon}{\varepsilon}

\DeclareRobustCommand\shortsetminus{%
  \mathchoice%
    {\kern0pt\raise0.25ex\hbox{$\displaystyle \mathbb r$}\kern0.8pt}
    {\kern0pt\raise0.28ex\hbox{$\textstyle \mathbb r$}\kern0.8pt}
    {\kern0pt\raise0.20ex\hbox{$\scriptstyle \mathbb r$}\kern0.4pt}
    {\kern0pt\raise0.17ex\hbox{$\scriptscriptstyle \mathbb r$}\kern0.2pt}
}%

\newcommand{\inertto}{\rightarrowtail}
\newcommand{\into}{\hookrightarrow}
\newcommand{\actto}{\rightarrow\Mapsfromchar}

\tikzset{
  act /.tip = >|
}

\tikzset{
    onto/.style={/tikz/commutative diagrams/twoheadrightarrow}
}

\tikzset{
    into/.style={/tikz/commutative diagrams/hookrightarrow}
}

\tikzset{
    inertto/.style={/tikz/commutative diagrams/rightarrowtail}
}

\tikzset{
    rot/.style={shift={(-4.5pt,0pt)}, rotate=-45}
}

\newcommand{\drpullback}
  {\arrow[phantom]{dr}[very near start,description]{\lrcorner}}
\newcommand{\dlpullback}
  {\arrow[phantom]{dl}[very near start,description]{\llcorner}}

\usepackage{eucal}

\DeclareMathAlphabet{\mathbbe}{U}{bbold}{m}{n}
\newcommand{\simplexcategory}{\mathbbe{\Delta}}

\DeclareSymbolFont{bbold}{U}{bbold}{m}{n}
\DeclareSymbolFontAlphabet{\mathbbold}{bbold}

\setcounter{secnumdepth}{2}

\newtheorem{lemma}{Lemma}[subsection]
\newtheorem{prop}[lemma]{Proposition}
\newtheorem{cor}[lemma]{Corollary}
\newtheorem{theorem}[lemma]{Theorem}
\theoremstyle{definition}
\newtheorem{remark}[lemma]{Remark}
\newtheorem{example}[lemma]{Example}

\newtheorem{taller}[lemma]{$\!\!$}

\newenvironment{blanko}[1]%
{\begin{taller}{\normalfont\bfseries  #1}\normalfont}%
{\end{taller}}

\makeatletter
\renewcommand{\tableofcontents}{%
   \begin{center}
\begin{minipage}{100mm}
    
   \begin{center}
     
     
       \bf{\contentsname}
   \end{center}
	
       \vspace{-18pt}
	
   \footnotesize
   \begin{center}
\@starttoc{toc}
   \end{center}	
\end{minipage}
	\end{center}
	\addvspace{2em \@plus\p@}
}

\makeatother

\newcommand{\spaces}{\mathcal{S}}

\newcommand{\id}{\operatorname{id}}

\newcommand{\name}[1]{\ulcorner #1\urcorner}
\newcommand{\coname}[1]{\raisebox{-1.5pt}{$\llcorner$}\kern0.8pt#1\kern0.8pt\raisebox{-1.5pt}{$\lrcorner$}}

\newcommand{\op}{^{\text{{\rm{op}}}}}

\newcommand{\N}{\mathbb{N}}

\newcommand{\Decbot}[1]{\operatorname{Dec}_\bot{}\kern-2pt{#1}}
\newcommand{\Dectop}[1]{\operatorname{Dec}_\top{}\kern-2pt{#1}}

\providecommand{\kat}[1]{\text{\textbf{\textsl{#1}}}}

\usepackage[hang,flushmargin]{footmisc}

\newcommand\blfootnote[1]{%
  \begingroup
  \renewcommand\thefootnote{}\footnote{#1}%
  \addtocounter{footnote}{-1}%
  \endgroup
}


\usepackage[pdfencoding=auto,raiselinks=false,colorlinks=true,allcolors=.,bookmarks=false]{hyperref}

\begin{document}
\title{Convex decomposition spaces and \\[4pt]
Crapo complementation formula}

\author{Imma G\'alvez-Carrillo, Joachim Kock, and Andrew Tonks}

\date{}

\maketitle

\begin{abstract}
  We establish a Crapo complementation formula for the M\"obius function
  $\mu^X$ in a general decomposition space $X$ in terms of a convex
  subspace $K$ and its complement: $\mu^X \simeq \mu^{X\shortsetminus K} +
  \mu^X*\zeta^K*\mu^X$. We work at the objective level, meaning that the
  formula is an explicit homotopy equivalence of $\infty$-groupoids. Almost
  all arguments are formulated in terms of (homotopy) pullbacks. Under
  suitable finiteness conditions on $X$, one can take homotopy cardinality
  to obtain a formula in the incidence algebra at the level of
  $\Q$-algebras. When $X$ is the nerve of a locally finite poset, this recovers the Bj\"orner--Walker
  formula, which in turn specialises to the original Crapo complementation
  formula when the poset is a finite lattice.
  A substantial part of the work is to introduce and develop the notion of
  convexity for decomposition spaces, which in turn requires some general
  preparation in decomposition-space theory, notably some results on
  reduced covers and ikeo and semi-ikeo maps. These results may be of wider
  interest. Once this is set up, the objective proof of the Crapo formula is quite
  similar to that of Bj\"orner--Walker.
\end{abstract}

\blfootnote{
		\raisebox{-0.1pt}{\includegraphics[height=1.65ex]{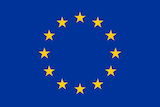}}
		This work has received funding from the European Union's Horizon 2020 research
        and innovation programme under Marie Sk\l odowska-Curie grant agreement 
        No.~101028099 and a Spanish university requalification and mobility 
		grant (UP2021-034, UNI/551/2021) with NextGenerationEU funds. More
		indirectly it was supported by research grants PID2019-103849GB-I00, 
		PID2020-116481GB-I00,
        and
        PID2020-117971GB-C22
        (AEI/FEDER, UE) of Spain, grants
		2021-SGR-0603 and 2021-SGR-1015 of Catalonia, and was also supported through 
		the Severo Ochoa and Mar\'ia de Maeztu Program for Centers and Units 
		of Excellence in R\&D grant number CEX2020-001084-M.}

\small

\vspace*{-14pt}

\tableofcontents

\vspace*{-14pt}

\section{Introduction}

The theory of incidence algebras and M\"obius inversion for locally finite 
posets was developed by 
Rota~\cite{Rota:Moebius} (see also Joni--Rota~\cite{JoniRotaMR544721}).
Leroux~\cite{Leroux:1976}, \cite{Content-Lemay-Leroux} showed how the theory can be generalised from 
locally finite posets to certain locally finite categories called {\em 
M\"obius categories}.
However, beyond the basic constructions, the theory did not develop much
for some decades.
An important development, independent of Leroux theory, was the
simplicial viewpoint taken by D\"ur~\cite{Dur:1986}. Next, an important
step was the objective viewpoint of Lawvere and Menni~\cite{LawvereMenniMR2720184},
upgrading algebraic identities to bijections of sets and equivalences of 
groupoids.

The present authors \cite{Galvez-Kock-Tonks:1512.07573},
\cite{Galvez-Kock-Tonks:1512.07577} (see also
\cite{Galvez-Kock-Tonks:1612.09225}) introduced the notion of decomposition
space (the same thing as the $2$-Segal spaces of Dyckerhoff and
Kapranov~\cite{Dyckerhoff-Kapranov:1212.3563}) as a general framework for
incidence algebras and M\"obius inversion. Following the direction set out
by Leroux~\cite{Leroux:1976}, \cite{Content-Lemay-Leroux}, the theory is
categorical, and in fact $\infty$-categorical. A benefit of this 
homotopical viewpoint is that
symmetries are built in, which is useful even in classical combinatorial
situations that do not have any $\infty$-category appearance. Following the
direction of D\"ur~\cite{Dur:1986} the theory is simplicial and covers a
class of simplicial $\infty$-groupoids which are not Segal spaces. This allows
many combinatorial co-, bi- and Hopf
algebras to be realised as
incidence coalgebras of decomposition spaces while they are not incidence 
coalgebras of posets or categories. Finally following
the direction set out by Lawvere and Menni~\cite{LawvereMenniMR2720184}, the theory
is objective (with the link to ordinary algebra over $\Q$ given by
homotopy cardinality). In particular, remarkably many arguments
can be formulated in terms of (homotopy) pullbacks.
One benefit of the objective approach is that
many formulae can be established without imposing finiteness conditions:
they are still valid homotopy equivalences of $\infty$-groupoids. The
finiteness conditions are required only to be able to take cardinality.

With the new toolbox at hand it is now an overall programme to upgrade the 
classical theory from posets to decomposition spaces, and investigate new 
applications.
Beyond the general theory, an important extension of Rota's original 
contribution was the formula of Carlier~\cite{Carlier:1801.07504} for 
the relationship between the 
M\"obius function of two decomposition spaces related by an 
$\infty$-adjunction, which generalises Rota's formula for a Galois 
correspondence of posets.

In the present paper we give a generalisation to the decomposition-space
setting of another classical 
formula, namely Crapo's complementation formula, originally formulated
in the setting of lattices~\cite{Crapo} but generalised to arbitrary posets
by Bj\"orner and Walker~\cite{Bjoerner-Walker}.

To do so, we first have to develop some general theory on convex 
subspaces of a decomposition space, and some general results about ikeo 
and semi-ikeo maps.

Functoriality is an important aspect of the objective approach to incidence
algebras. {\em Culf} maps between decomposition spaces induce algebra
homomorphisms contravariantly on incidence algebras, whereas {\em ikeo} maps
induce algebra homomorphisms covariantly on incidence algebras. Culf maps
have been exploited a lot already both in the original series of papers
\cite{Galvez-Kock-Tonks:1512.07573,Galvez-Kock-Tonks:1512.07577,Galvez-Kock-Tonks:1512.07580}
and in later works (see notably \cite{Hackney-Kock:2210.11191}). Ikeo maps
have not yet received the same attention, and our first task is to develop
some basic theory about them needed for the Crapo formula. While the culf
condition interacts very nicely with the original characterisation of decomposition
spaces in terms of active-inert pullbacks, the ikeo condition interacts
better with an alternative characterisation of decomposition spaces in
terms of pullbacks with inert covers (to be made precise below), so we
take the opportunity to develop that viewpoint 
(cf.~Theorem~\ref{thm:decomp}).

A subtle issue 
is the preservation of units for the convolution product in the 
incidence algebras. While for the decomposition-space axioms unitality has 
turned out to be 
automatic~\cite{Feller-Garner-Kock-Proulx-Weber:1905.09580} (the incidence
algebra of a simplicial set is automatically unital if just it is 
associative), and while the contravariant functoriality in simplicial  maps
preserves units automatically if it preserves the convolution product, the 
same is not true for the covariant functoriality: there are simplicial maps
that are not quite ikeo, which preserve the convolution product without 
preserving the unit. Reluctantly we call them {\em semi-ikeo}.
We show that full inclusions are such maps. We show that
if a simplicial space is semi-ikeo over a decomposition space then it is
itself a decomposition space (Lemma~\ref{semiikeo / decomp is decomp}).

A full inclusion of simplicial spaces is called {\em convex} when it is
furthermore culf. A convex subspace of a decomposition space is thus again
a decomposition space, and its complement is a decomposition space too 
(although of course not generally convex).

With these preparations we are ready to state and prove the Crapo
complementation formula for decomposition spaces: for an arbitrary
decomposition space $X$ and a convex subspace $K$, we have the following
formula (Theorem~\ref{crapo}) relating the M\"obius function of $X$ with
that of $K$ and its complement:
$$
\mu^X = \mu^{X\shortsetminus K} + \mu^X *\zeta^K *\mu^X  .
$$

The statement here involves formal differences, since each M\"obius 
function is an alternating sum, but after moving all negative terms to the 
other side of the equation, the formula is established as an explicit 
homotopy equivalence of $\infty$-groupoids. The formula determines
$\mu^X$ from $\mu^{X\shortsetminus K}$ and $\zeta^K$ by a well-founded 
recursion expressed by the convolution product.

\section{Decomposition spaces}
\label{Sec:decomp}

The main contribution of this section is the characterisation of decomposition spaces
in terms of squares of reduced covers against active injections (Conditions
(3) and (4) in Theorem~\ref{thm:decomp} below). This 
condition plays well together with semi-ikeo maps, as we shall see in  
Section~\ref{sec:ikeo}.

\begin{blanko}{Active and inert maps.}
  The simplex category $\simplexcategory$ (whose objects are the nonempty
  finite ordinals $[n]$ and whose morphisms are the monotone maps)
  has an active-inert factorisation system.
  An arrow in $\simplexcategory$ is \emph{active}, written $a: [m]\actto [n]$, when it preserves
  end-points, $a(0)=0$ and $a(m)=n$; it is \emph{inert}, written $a: [m]\inertto [n]$, if it is distance
  preserving, $a(i+1)=a(i)+1$ for $0\leq i\leq m-1$.
  The active maps are generated by the codegeneracy maps $s^i : [n+1] \actto [n]$ and by the {\em
  inner} coface maps $d^i : [n-1] \actto [n]$, $0 < i < n$, while the inert maps are
  generated by the {\em outer} coface maps $d^\bot := d^0$ and $d^\top:= d^n$.
  Every morphism in $\simplexcategory$ factors uniquely as an active map
  followed by an inert map.  Furthermore, it is a basic fact 
  \cite[Lemma 2.7]{Galvez-Kock-Tonks:1512.07573}
  that active and inert maps in $\simplexcategory$ admit pushouts along each
  other, and the resulting maps are again active and inert.
\end{blanko}

\begin{blanko}{Decomposition spaces \cite{Galvez-Kock-Tonks:1512.07573}.}
  A simplicial space $X: \simplexcategory\op\to\spaces$ is called a {\em
  decomposition space} when it takes active-inert pushouts to pullbacks.
  It has turned out \cite{Feller-Garner-Kock-Proulx-Weber:1905.09580}
  that the degeneracy maps are not required among the 
  active maps to state the condition, so to check the decomposition-space 
  axioms, it is enough to check the following squares for all $0<i<n$:
 
  \[
  \begin{tikzcd}[column sep={4.5em,between origins}, row sep={3.5em,between origins}]
  X_{1+n} \drpullback \ar[d, -act, "d_{1+i}"'] \ar[r, inertto, "d_{\bot}"] & X_n 
  \ar[d, -act, "d_i"]  \\
  X_n \ar[r, inertto, "d_{\bot}"'] & X_{n-1}
  \end{tikzcd}
  \qquad
  \begin{tikzcd}[column sep={4.5em,between origins}, row sep={3.5em,between origins}]
  X_{n+1} \drpullback \ar[d, -act, "d_{i}"'] \ar[r, inertto, "d_{\top}"] & X_n 
  \ar[d, -act, "d_i"]  \\
  X_n \ar[r, inertto, "d_{\top}"'] & X_{n-1}
  \end{tikzcd}
  \]
  As is custom, we use the words (and symbols) `active' and `inert'
  also for their images in $\spaces$ under a functor $X: 
  \simplexcategory\op\to\spaces$.
\end{blanko}

Since the decomposition-space axiom is formulated in terms of
pullbacks --- as are the notions of culf, ikeo, semi-ikeo, fully faithful,
convex, and convolution product featured in this work --- the following
simple lemma becomes an indispensable tool (used a dozen times in this
paper):

\begin{lemma}[Prism Lemma]
In a prism diagram
  \[
  \begin{tikzcd}[cramped]
  \cdot \ar[r] \ar[d] & \cdot \drpullback \ar[r] \ar[d] & \cdot \ar[d]  \\
  \cdot \ar[r] & \cdot \ar[r] & \cdot
  \end{tikzcd}
  \]
  the left-hand square is a pullback if and only if the whole rectangle is
  a pullback.
\end{lemma}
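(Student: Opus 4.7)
The plan is to establish both implications directly from the universal property of pullbacks. Label the diagram with top row $A \to B \to C$ and bottom row $A' \to B' \to C'$, with vertical maps $p,q,r$ and horizontals $f,g$ on top, $f',g'$ on bottom; the hypothesis is that the right-hand square is a pullback.

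For the forward direction, I assume the left square is a pullback. Given a test object $T$ equipped with compatible maps $T \to C$ and $T \to A'$, I would first produce the composite $T \to A' \to B'$ and invoke the right-square pullback property on the pair $(T \to C, T \to B')$ to obtain a canonical map $T \to B$. Then, applying the left-square pullback property to the pair $(T \to B, T \to A')$, I obtain a canonical map $T \to A$. Chaining the two universal properties gives the universal property of the outer rectangle, and contractibility of the space of choices propagates through both steps.

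For the reverse direction, I assume the outer rectangle is a pullback. Given $T$ with a compatible pair $(T \to B, T \to A')$ for the left square, I extend to a cone on the outer rectangle by composing $T \to B \to C$, and invoke the outer pullback property to produce a canonical $T \to A$. The one point that requires care---and the main (indeed only) obstacle in the whole argument---is verifying that the composite $T \to A \xrightarrow{f} B$ agrees with the originally given map $T \to B$. The two candidate maps in question become equal after postcomposition with $q : B \to B'$ (by the given cone data together with commutativity of the left square) and after postcomposition with $g : B \to C$ (by construction of $T \to C$), so the pullback property of the right square forces them to coincide.

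Everything else is a mechanical propagation of universal properties. In the ambient $\infty$-categorical setting of $\spaces$, ``equal'' and ``unique'' should be read in the homotopical sense, with equalities replaced by canonical equivalences in mapping spaces; but the structural argument is identical, and can equivalently be phrased by testing against all $T$ and reducing to the classical prism lemma pointwise in mapping spaces.
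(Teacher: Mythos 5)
Your argument is the standard pasting law for pullbacks, and it is correct; the paper itself offers no proof of the Prism Lemma, treating it as a known fact (it is \cite[Lemma~4.4.2.1]{Lurie:HTT}), so there is nothing to compare against beyond noting that your route is the expected one. The only point deserving emphasis is the one you already flag: in the $\infty$-categorical setting the reverse direction's verification that $T\to A\to B$ ``coincides'' with the given $T\to B$ is not a property but the production of a path in $\Map(T,B)$, coherently compatible with the given cone data; this is handled cleanly by your closing remark that one should test against all $T$ and use that a square is a pullback precisely when the induced square of mapping spaces is a homotopy pullback of spaces, reducing everything to the prism lemma for spaces.
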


\subsection{Decomposition spaces from the inert viewpoint}

  We work towards an alternative characterisation of decomposition spaces,
  but first we need to set up some terminology.

\bigskip

For each $[k]\in \simplexcategory$ there are $k$ inert maps 
$$
\rho_i : [1] \inertto [k] \qquad \qquad i = 1,\ldots,k  ,
$$
namely picking out the principal edge $(i-1,i)$. For $k=0$ there are zero such
maps.

\begin{blanko}{Special reduced-cover squares.}
  For an active map $\alpha: [k] \actto [n]$, write $[n_i]$ for the 
  ordinal $[\alpha(i)-\alpha(i-1)]$ appearing in the active-inert factorisation of $\alpha
  \circ \rho_i$:
  \[
  \begin{tikzcd}
  {[1]} \ar[r, inertto, "\rho_i"] \ar[d, dotted, -act, "\alpha_i"']
  & {[k]} \ar[d, -act, "\alpha"]
  \\
  {[n_i]}  \ar[r, inertto, dotted, "\gamma^\alpha_i"'] & {[n]}  .
  \end{tikzcd}
  \]
  If $k>0$, the maps $\gamma^\alpha_i$ together constitute a {\em cover} of $[n]$,
  meaning that they are jointly surjective. A cover is called {\em reduced}
  if no edges are hit twice (for $(\gamma^\alpha_i)$ this is clear) and
  if there are no copies of $[0]$ involved (which
  is the case when $\alpha$ is injective). The notions of cover and
  reduced cover in the inert part of $\simplexcategory$ were first studied 
  by Berger~\cite{Berger:Adv2002}, including the important characterisation of 
  categories: a simplicial set is a category if and only if it is a sheaf 
  for this notion of cover.
  
  The maps $\alpha_i : [1] \actto [n_i]$
  together constitute the unique join decomposition of $\alpha$ into active 
  maps with domain $[1]$: we have
  $$
  \alpha = \alpha_1 \vee \cdots \vee \alpha_k .
  $$
  
  The $k$-tuple of maps $\gamma^\alpha_i$ (and the $k$-tuple of squares)
  thus define for any simplicial space $X$ a diagram
  \begin{equation*}\label{alpha-pbk}\tag{SRCS}
  \begin{tikzcd}[column sep={10em,between origins}]
  X_1 \times \cdots \times X_1 & \ar[l, 
  "{(\rho_1,\ldots,\rho_k)\upperstar}"']   X_k   \\
  X_{n_1} \times \cdots \times X_{n_k} 
  \ar[u, -act, "(\alpha_1\times\cdots\times\alpha_k)\upperstar"] & X_n .
  \ar[u, -act, "{\alpha\upperstar} = (\alpha_1\vee\cdots\vee\alpha_k)\upperstar"'] 
  \ar[l, "{(\gamma^\alpha_1,\ldots,\gamma^\alpha_k)\upperstar}"]
  \end{tikzcd}
  \end{equation*}
  We refer to these squares as {\em special reduced-cover squares}. 
  Note that the vertical maps are active, or products of active 
  maps, while the components of the horizontal maps are inert.
  Here and in the text below we use notation
  such as $(\alpha_1\times\cdots\times\alpha_k)\upperstar$
    and $(\rho_1,\ldots,\rho_k)\upperstar$
    for
    $\alpha_1\upperstar\times\cdots\times\alpha_k\upperstar$
      and $(\rho_1\upperstar,\ldots,\rho_k\upperstar)$ respectively.

\end{blanko}

\begin{blanko}{General reduced-cover squares.}
  More generally, instead of starting with the reduced cover of $[k]$ consisting 
  of the $k$ maps $\rho_i: [1] \inertto [k]$, we can start with an 
  arbitrary reduced cover of $[k]$, namely $m$ inert maps $\tau_i : [k_i] 
  \inertto [k]$ with $\sum_i k_i = k$ and such that they are jointly 
  surjective and $k_i \neq 0$.
  With this data, just as before, we write $[n_i]$ for the
  ordinal appearing in the active-inert factorisation of $\alpha \circ 
  \tau_i$:
  \[
  \begin{tikzcd}
  {[k_i]} \ar[r, inertto, "\tau_i"] \ar[d, dotted, -act, "\alpha_i"']
  & {[k]} \ar[d, -act, "\alpha"]
  \\
  {[n_i]}  \ar[r, inertto, dotted, "\gamma^{\alpha,\tau}_i"'] &
  {[n]}  .
  \end{tikzcd}
  \]
  Again, if $k>0$, the maps $\gamma^{\alpha,\tau}_i: [n_i] \inertto [n]$
  together constitute a cover of $[n]$, which is reduced if $\alpha$ is
  injective. Note also that we have $\alpha = \alpha_1 \vee \cdots \vee
  \alpha_m$. For convenience we assume the cover is in the canonical order,
  that is, $\tau_{i}(0)<\tau_{i+1}(0)$ for $1\leq i\leq m-1$, and write
  $\beta:[m]\actto[k]$ for the active map with $\beta(i)=\tau_{i+1}(0)$.

  The squares together define for any simplicial space $X$
  a diagram
  \begin{equation*}\label{alpha-tau-pbk}\tag{GRCS}
  \begin{tikzcd}[column sep={10em,between origins}]
  X_{k_1} \times \cdots \times X_{k_m} & 
  \ar[l, "{(\tau_1,\ldots,\tau_m)\upperstar}"']   X_k   \\
  X_{n_1} \times \cdots \times X_{n_m} 
  \ar[u, -act, "(\alpha_1 \times\cdots\times \alpha_m)\upperstar"] & X_n  .
  \ar[u, -act, "{\alpha\upperstar = (\alpha_1\vee\cdots\vee\alpha_k)\upperstar}"'] 
  \ar[l, "{(\gamma^{\alpha,\tau}_1,\ldots,\gamma^{\alpha,\tau}_m)\upperstar}"]
  \end{tikzcd}
  \end{equation*}
  We refer to these squares as {\em general reduced-cover squares}.
  Note again that the vertical maps are active and the components of the 
  horizontal maps are inert.  
\end{blanko}

\begin{theorem}\label{thm:decomp}
  For any simplicial space $X$, the following are equivalent.
  \begin{enumerate}
    \item Active-inert squares are pullbacks (i.e.~$X$ is a decomposition 
	space).
  
    \item Squares formed by inert maps and active injections are pullbacks.
  
    \item For every active injection $\alpha: [k] \actto [n]$ with $k\neq 
	0$, the special reduced-cover square \eqref{alpha-pbk} is a pullback.
  
    \item For every reduced cover $( \tau_i : [k_i] \inertto [k]) _{1\leq 
	i \leq m}$ and every every active injection $\alpha: [k] \actto [n]$ with $k\neq 
	0$, the general reduced-cover square \eqref{alpha-tau-pbk} is a pullback.
  \end{enumerate}
\end{theorem}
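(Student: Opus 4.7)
The plan is to prove the four conditions equivalent via the cycle $(1) \Rightarrow (2) \Rightarrow (3) \Rightarrow (4) \Rightarrow (2) \Rightarrow (1)$, together with the separate trivial specialisation $(4) \Rightarrow (3)$. Three of these arrows are essentially immediate: $(1) \Rightarrow (2)$ since active injections are a special case of active maps; $(4) \Rightarrow (3)$ by specialising the cover to the principal edges $\tau_i = \rho_i$ (so each $k_i = 1$); and $(2) \Rightarrow (1)$ by invoking the Feller-Garner-Kock-Proulx-Weber reduction cited in the text, according to which the decomposition axiom reduces to checking inner-face-vs-outer-face squares, which are a special case of active-injection-vs-inert. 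The substance lies in $(2) \Rightarrow (3)$, $(3) \Rightarrow (4)$, and $(4) \Rightarrow (2)$.

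For $(2) \Rightarrow (3)$, I build the SRCS from $k$ iterated active-inert pushouts following the join decomposition $\alpha = \alpha_1 \vee \cdots \vee \alpha_k$. At step $i$ one pushes out $\alpha_i: [1] \actto [n_i]$ (an active injection since $\alpha$ is) against the principal edge of the previously enlarged ordinal that corresponds to the $i$-th edge of $[k]$; each such pushout is an active-injection-vs-inert pushout square, so by (2) its image in $\spaces$ is a pullback. Combining the $k$ resulting pullback squares via the Prism Lemma together with a Fubini-type reshuffling of pullbacks over products yields the single square $X_n \isopil X_k \times_{X_1^k} \prod_i X_{n_i}$, which is exactly the SRCS pullback. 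For $(3) \Rightarrow (4)$, by uniqueness of active-inert factorisation the principal-edge SRCS for each restricted map $\alpha_j: [k_j] \actto [n_j]$ (itself an active injection with $k_j > 0$, since the cover is reduced) recovers precisely the corresponding block of the principal-edge SRCS for $\alpha$. Taking the product of these block SRCS pullbacks, substituting into the GRCS cospan, and applying an associativity/Fubini simplification together with the SRCS for $\alpha$, exhibits the candidate GRCS pullback as $X_n$.

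For $(4) \Rightarrow (2)$: given $\iota: [a] \inertto [c]$ and active injection $\alpha: [a] \actto [b]$ with active-inert pushout producing $\iota': [b] \inertto [d]$ and $\alpha': [c] \actto [d]$ (the degenerate case $c = 0$ forces $a = 0$ and the required pullback is trivial), apply (4) to the active injection $\alpha': [c] \actto [d]$ using the reduced cover of $[c]$ that partitions the edges into the image of $\iota$ together with (when non-empty) its left and right complementary intervals. The explicit pushout formula makes $\alpha'$ equal to $\alpha$ on the middle piece and the identity on the complements, so the active parts of the GRCS for the complementary pieces are identities and those components of the base-of-pullback match identically; the product-over-product pullback collapses to the single pullback $X_d \isopil X_b \times_{X_a} X_c$, which is the required active-injection-vs-inert pullback. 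The main obstacle is the Prism Lemma and Fubini bookkeeping in $(2) \Rightarrow (3)$: verifying carefully that the $k$ sequential pullbacks over single copies of $X_1$ correctly assemble into one pullback over $X_1^k$ with all the $\alpha_i$'s acting simultaneously, keeping track at each stage of which principal edge of the growing ordinal corresponds to the next original edge of $[k]$.
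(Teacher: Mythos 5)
Your proposal is correct: every implication checks out, and the overall toolbox is the same as the paper's (the prism lemma, Fubini for iterated pullbacks, the combinatorics of active--inert pushouts, and the Feller--Garner--Kock--Proulx--Weber theorem for $(2)\Rightarrow(1)$). But the decompositions you use differ from the paper's in three places, and the comparison is worth recording. For $(3)\Rightarrow(4)$ the paper pastes on top of the GRCS the \emph{special} reduced-cover square of the auxiliary active injection $\beta\colon[m]\actto[k]$ determined by the cover, so that the outer rectangle is the SRCS of $\alpha\beta$; you instead paste alongside the GRCS the \emph{product of the block SRCS's} of the restrictions $\alpha_j\colon[k_j]\actto[n_j]$, so that the outer rectangle is the SRCS of $\alpha$ itself. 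Both are two-line prism arguments using only instances of (3). For the hard direction out of (2), the paper proves $(2)\Rightarrow(4)$ directly, slicing the GRCS vertically into $m$ squares and using auxiliary projection squares to recognise each slice as an active--inert pullback; you instead prove $(2)\Rightarrow(3)$ by iterating single-edge active--inert pushouts along the join decomposition $\alpha=\alpha_1\vee\cdots\vee\alpha_k$, obtaining $X_n$ as an iterated fibre product over single copies of $X_1$ and then reassembling over $X_1^k$, and you reach (4) via your $(3)\Rightarrow(4)$. This concentrates all the Fubini bookkeeping in one step (which you correctly flag, and which does go through: at stage $i$ the relevant principal edge of $[m_{i-1}]$ factors through $[k]$, so each successive fibre product is taken over a map factoring through the projection to $X_k$). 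Finally, for $(4)\Rightarrow(2)$ the paper treats only the generating squares ($d_\top$, resp.\ $d_\bot$, against inner faces) via a two-chart cover and a projection square, while you treat a general active-injection-vs-inert pushout in one stroke via an (up to) three-chart cover of $[c]$; this is the same mechanism, just run uniformly. One tiny point to patch: besides $c=0$ you should also dispose of the case $a=0$ with $c>0$ separately (the middle chart $[0]\inertto[c]$ is not allowed in a reduced cover), but an active injection $[0]\actto[b]$ forces $b=0$, so that square is trivially a pullback --- a cosmetic omission, not a gap.
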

  
\begin{proof}
  The equivalence of (1) and (2) is the content of the theorem of Feller et
  al.~\cite{Feller-Garner-Kock-Proulx-Weber:1905.09580} (that is, the
  statement that every $2$-Segal space is unital).
  
  The special reduced-cover squares \eqref{alpha-pbk} are special cases 
  of the general reduced-cover squares \eqref{alpha-tau-pbk}, so it is clear that (4) implies (3).
  Conversely, (3) implies (4) by an easy prism-lemma argument.
  Write an arbitrary general reduced-cover square as the bottom square:
  \[
  \begin{tikzcd}[column sep={10em,between origins}]
  X_1 \times \cdots \times X_1 & \ar[l, 
  "{(\rho_1,\ldots,\rho_m)\upperstar}"']   X_m   \\
  X_{k_1} \times \cdots \times X_{k_m} \ar[u, -act]& 
  \ar[l, "{(\tau_1,\ldots,\tau_m)\upperstar}"']   X_k  \ar[u, -act, "\beta\upperstar"'] \\
  X_{n_1} \times \cdots \times X_{n_m} 
  \ar[u, -act, "(\alpha_1 \times\cdots\times \alpha_m)\upperstar"] & X_n
  \ar[u, -act, "{\alpha\upperstar}"'] 
  \ar[l, "{(\gamma^{\alpha,\tau}_1,\ldots,\gamma^{\alpha,\tau}_m)\upperstar}"]
  \end{tikzcd}
  \]
  and complete it by pasting a special reduced-cover square on top of it.
  Assuming Condition~(3), both the upper square and the whole rectangle are
  pullbacks, so by the prism lemma also the lower square is a pullback,
  which means that (4) holds.

  It is not difficult to show that (4) implies (2): we want to establish that the square
  \[
  \begin{tikzcd}
  X_n\ar[d, -act, "d_i"'] & \ar[l, inertto, "d_\top"'] X_{n+1} \ar[d, -act, "d_i"]  \\
  X_{n-1} & X_n \ar[l, inertto, "d_\top"]
  \end{tikzcd}
  \]
  is a pullback (for $0 < i <n$). (We should of course similarly deal with 
  the analogous squares with bottom face maps.) Decompose the square as
  \[
  \begin{tikzcd}
  X_n\ar[d, -act, "d_i"'] & \ar[l, "\operatorname{pr}_1"'] X_n \times X_1 \ar[d, -act, "d_i\times 
  \id"']& \ar[l, 
  "{(d_\top,d_\bot^n)}"'] X_{n+1} \ar[d, -act, "d_i"]  \\
  X_{n-1} & \ar[l, "\operatorname{pr}_1"] X_{n-1} \times X_1 & \ar[l, "{(d_\top,d_\bot^n)}"] X_n  .
  \end{tikzcd}
  \]
  Now the left-hand square is a pullback since it projects away an 
  identity, and the right-hand square is a pullback since it is a general 
  reduced-cover square as in Condition~(4)
  
  The most 
  interesting part is to show that (2) implies (4). So we assume that all 
  the squares in Condition~(2) are pullbacks, and aim to show that a general 
  reduced-cover square \eqref{alpha-tau-pbk} is a pullback. 
  For ease of exposition we describe explicitly
  the case where there are only $m=2$ charts in the cover $\tau$.
  This means that the square has the form
  \[
    \hspace*{2.6em}
    \begin{tikzcd}[column sep={9em,between origins}, row sep={5em,between origins}]
{}&  X_{k_1} \times X_{k_2} & \ar[l,
  "{(d_\top^{k_2},d_\bot^{k_1})}"']   X_k   \\
{}&  X_{n_1} \times  X_{n_2} \ar[u, -act, "(\alpha_1\times \alpha_2)\upperstar"] & X_n .
  \ar[u, -act, "{\alpha\upperstar =(\alpha_1 \vee \alpha_2)\upperstar}"']
  \ar[l, "{(d_\top^{n_2},d_\bot^{n_1})}"]
  \end{tikzcd}
  \]
  Such a square we can decompose into two (or $m$, in the general case)
  smaller squares vertically like the solid part of this diagram:
  \[
  \begin{tikzcd}[column sep={9em,between origins}, row sep={2.4em,between origins}]
	X_{k_2}& & 
	\\
 &  X_{k_1} \times X_{k_2} \ar[lu, dotted, "\operatorname{pr}_2"']
 & \ar[l, "{(d_\top^{k_2},d_\bot^{k_1})}"']
  X_{k_1+k_2}   
  \\
X_{n_2} \ar[uu, -act, dotted, "\alpha_2\upperstar"] & & 
 \\
 & X_{k_1} \times  X_{n_2} 
 \ar[ld, dotted, "\operatorname{pr}_1"] \ar[lu, dotted, "\operatorname{pr}_2"']
  \ar[uu, -act, "(\id \times \alpha_2)\upperstar"'] &
  X_{k_1+n_2} 
  \ar[uu, -act, "{(\id \vee \alpha_2)\upperstar}"']
  \ar[l, "{(d_\top^{n_2},d_\bot^{k_1})}"] 
  \\
X_{k_1} & & 
 \\
 & X_{n_1} \times  X_{n_2} \ar[ld, dotted, "\operatorname{pr}_1"]
  \ar[uu, -act, "(\alpha_1\times \id)\upperstar"'] &
  X_{n_1+n_2} .
  \ar[uu, -act, "{(\alpha_1 \vee \id)\upperstar}"']
  \ar[l, "{(d_\top^{n_2},d_\bot^{n_1})}"] \\
X_{n_1} \ar[uu, -act, dotted, "\alpha_1\upperstar"] &&
  \end{tikzcd}
    \]
  The dotted projection squares, which are pullbacks, serve to show that the two
  (respectively $m$)
  solid squares are pullbacks. Indeed, the horizontal rectangles
  are pullbacks because they are active-inert pullbacks (under 
  Condition~(2)), so by the prism lemma the solid squares are pullbacks,
  and therefore the vertical solid rectangle is a pullback, as we wanted 
  to show.
\end{proof}

\begin{remark}
  The equivalences involving Conditions (3) and (4) are new in this
  generality, as far as we know. A version of (1)$\Leftrightarrow$(4) but
  with all active maps instead of only active injections (hence a weaker
  statement) was given in \cite[Prop.~6.9]{Galvez-Kock-Tonks:1512.07573}
  via a detour into the twisted arrow category of the category of active
  maps. The full strength of Condition~(3) is important in the following,
  because it is the one that immediately interacts with the notion of
  semi-ikeo map, which we come to next. (In particular, Condition~(3) is
  the key to Proposition~\ref{semiikeo / decomp is decomp}.)
\end{remark}

\subsection{Convolution and M\"obius function}

A combinatorial coalgebra is generally the vector space spanned
by the iso-classes of certain combinatorial objects (classically
intervals in a given poset), and the comultiplication is given in terms of
decomposition of those objects. Linear functionals on such a coalgebra,
such as the zeta and M\"obius functions, form the convolution algebra.
Homotopy linear algebra~\cite{Galvez-Kock-Tonks:1602.05082} gives a rather
systematic way of lifting such constructions to the objective level and
transforming algebraic proofs into bijective ones. Instead of the vector space
spanned by iso-classes of combinatorial objects, one considers the slice
category over the groupoid (or $\infty$-groupoid) $I$ of the combinatorial
objects themselves, with linear functors between such slices. Linear
functors are given by spans $I \leftarrow M \to J$, and instead of
algebraic identities one looks for homotopy equivalences between spans.

The reason why this works so well is that the slice category over $I$ is
the homotopy-sum completion of $I$, just as a vector space is the
linear-combination completion, and that linear functor means homotopy-sum
preserving, just like linear map means linear-combination preserving.
Furthermore the span representation of a linear functor corresponds to the
matrix representation of a linear map.
Thus the standard algebraic identities can be
recovered from these homotopy equivalences by taking homotopy cardinality,
under certain finiteness conditions. Specifically, all spans must be of
finite type meaning that the left leg $I \leftarrow M$ must have (homotopy)
finite fibres. But it is usually the case that the homotopy equivalences
can be established even without the finiteness conditions.

Let us briefly see how this procedure looks in the case of
interest, M\"obius functions~\cite{Galvez-Kock-Tonks:1512.07577}.
Recall that for any decomposition space $X$, the {\em incidence coalgebra} is the 
$\infty$-category $\spaces_{/X_1}$ equipped with the comultiplication $\Delta$ and
counit $\epsilon$ given by the spans
$$
X_1 \stackrel{d_1}\longleftarrow X_2 \stackrel{(d_2,d_0)}\longrightarrow 
X_1 \times X_1 \qquad \qquad
X_1 \stackrel{s_0}\longleftarrow X_0 \longrightarrow 1 .
$$
The incidence {\em algebra} is the convolution algebra 
$\kat{Lin}(\spaces_{/X_1}, \spaces) \simeq \spaces^{X_1}$.
Its objects are linear functionals, that is, given by spans $X_1
\leftarrow F \to 1$, with the standard convolution product $*$ given by 
the pullback formula
\[
\begin{tikzcd}[column sep={6em,between origins}, row sep={5em,between 
  origins}]
X_1 && \\
X_2 \ar[u, "d_1"] \ar[d, "{(d_2,d_0)}"']  & F*G \dlpullback \ar[lu] \ar[l] \ar[d] \ar[rd] & \\
X_1\times X_1 & F\times G \ar[l] \ar[r] & 1,
\end{tikzcd}
\]
and unit $\epsilon$. The incidence algebra at the level of
$\Q$-vector spaces is obtained by taking homotopy cardinality, provided 
certain finiteness conditions hold,
cf.~Subsection~\ref{sub:fin} below.

\bigskip

The relevance of the `inert' characterisation of decomposition spaces is
that it shows to what extent one can compose. Composition in the sense of
arrows in a category is not possible, but the convolution product provides
an alternative.
  In a Segal space, given a $p$-simplex whose last vertex coincides 
  with the zeroth simplex of a $q$-simplex, one can compose to get an
  $(p+q)$-simplex. This is provided by the equivalence $X_p \times_{X_0} 
  X_q \simeq X_{p+q}$. This is not generally possible in a decomposition 
  space, but it {\em is} possible in case the $p$-simplex and the 
  $q$-simplex already `sit on a $2$-simplex':
  if the long
  edges of the two simplices form the short edges of a $2$-simplex, then
  the $2$-simplex serves as a mould for the gluing.
  
  This is precisely what the convolution product allows, thanks to the
  decomposition-space axiom, which naturally appears in the `inert' form:
  to convolve the linear functionals $X_1 \leftarrow X_p\to 1$ and $X_1
  \leftarrow X_q\to 1$ (where the left-hand maps send a simplex to its long
  edge), we follow the pullback formula above to get
\[
\begin{tikzcd}[column sep={6em,between origins}, row sep={5em,between 
  origins}]
X_1 && \\
X_2 \ar[u, "d_1"] \ar[d, "{(d_2,d_0)}"']  & X_{p+q} \dlpullback \ar[lu] 
\ar[-act,l]
\ar[d] \ar[rd] & \\
X_1\times X_1 & X_p\times X_q
\ar[-act,l]
\ar[r] & 1,
\end{tikzcd}
\]
That $X_{p+q}$ appears as the pullback is precisely one of the basic
instances of the decomposition space axiom, inert version 
(Theorem~\ref{thm:decomp}).
 
\begin{blanko}{Completeness.}
  A decomposition space is called {\em complete}~\cite{Galvez-Kock-Tonks:1512.07577}
  when $s_0 : X_0 \to X_1$ is mono. The complement is then denoted 
  $\nondeg X_1$, the space of {\em nondegenerate edges},
  so as to be able to write $X_1 = X_0 + \nondeg X_1$. Since in a 
  decomposition space all degeneracy maps are pullbacks of this first 
  $s_0$ (cf.~\cite{Galvez-Kock-Tonks:1512.07577}), it follows that they are all mono, 
  and there is a well-defined space $\nondeg X_n \subset X_n$ of 
  nondegenerate $n$-simplices.
\end{blanko}
 
\begin{lemma}[{\cite{Galvez-Kock-Tonks:1512.07577}}]\label{nondeg by edges}
  An $n$-simplex of a complete decomposition space 
  is nondegenerate if and only if each of its principal 
  edges is nondegenerate.
\end{lemma}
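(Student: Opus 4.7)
The plan is to leverage the fact that in a complete decomposition space each degeneracy $s_j : X_{n-1} \to X_n$ arises as a pullback of $s_0 : X_0 \to X_1$ along the $(j+1)$-th principal-edge map $\rho_{j+1}\upperstar : X_n \to X_1$. This converts the statement ``$\sigma$ lies in the image of $s_j$'' into ``the $(j+1)$-th principal edge $\rho_{j+1}\upperstar(\sigma)$ lies in the image of $s_0$''. Since an $n$-simplex is degenerate precisely when it lies in the image of some $s_j$ (with $0 \le j \le n-1$), and an edge is degenerate precisely when it factors through $s_0$, ranging over $j$ yields the stated equivalence.

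The concrete task is thus to produce, for each $0 \le j \le n-1$, the pullback square
\[
\begin{tikzcd}
X_{n-1} \ar[r, "s_j"] \ar[d] & X_n \ar[d, "\rho_{j+1}\upperstar"] \\
X_0 \ar[r, "s_0"'] & X_1 \mathrlap{.}
\end{tikzcd}
\]
This is the image under $X$ of the active-inert pushout in $\simplexcategory$
\[
\begin{tikzcd}
{[1]} \ar[r, inertto, "\rho_{j+1}"] \ar[d, -act, "s^0"'] & {[n]} \ar[d, -act, "s^j"] \\
{[0]} \ar[r, inertto] & {[n-1]}
\end{tikzcd}
\]
encoding the active-inert factorisation of $s^j \circ \rho_{j+1}$, which is the constant map at the vertex $j \in [n-1]$. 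That $X$ carries this pushout to a pullback is precisely where the full decomposition-space axiom is needed (the degeneracy case, not merely the inner face case), justified via the unitality theorem of Feller et al.\ already invoked in the proof of Theorem~\ref{thm:decomp}. Completeness then guarantees that each $s_j$, being a pullback of the monomorphism $s_0$, is itself mono, so the subspaces $\nondeg X_n \subset X_n$ are well-defined.

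With the pullback in hand, the conclusion is a routine chase: $\sigma \in X_n$ lies in the image of $s_j$ iff $\rho_{j+1}\upperstar(\sigma)$ factors through $s_0$, i.e.~its $(j+1)$-th principal edge is degenerate. Taking the (homotopy) union over $j = 0, \ldots, n-1$ then identifies the degenerate locus of $X_n$ with those $n$-simplices having at least one degenerate principal edge, and the contrapositive is the lemma. The only mildly delicate point is the reliance on the unitality theorem rather than the stripped-down ``inner face'' form of the axiom stated in Section~\ref{Sec:decomp}; this aside, the argument is immediate.
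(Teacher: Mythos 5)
Your proof is correct and follows essentially the same route as the source: the paper itself gives no proof (it cites \cite{Galvez-Kock-Tonks:1512.07577}), but the key ingredient you establish --- that each degeneracy $s_j:X_{n-1}\to X_n$ is the pullback of $s_0:X_0\to X_1$ along the $(j+1)$-th principal-edge map, via the active-inert pushout of $s^0$ along $\rho_{j+1}$ --- is precisely the fact the paper records in the paragraph on completeness immediately before the lemma. Your remaining image chase, including the correct observation that the degeneracy squares require the unitality theorem of Feller et al.\ rather than the stripped-down inner-face form of the axiom, matches the standard argument.
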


\begin{blanko}{Phi functors.}\label{Phi}
  For each $n$, we define $\Phi_n$ to be the linear functional given by the span
  $$
  X_1 \leftarrow \nondeg X_n \to 1 .
  $$
  The left-hand map sends an $n$-simplex to its long edge.
\end{blanko}
 
\begin{remark}
  The $\Phi$-notation goes back to Leroux~\cite{Leroux:1976} (his 
  {\em \'el\'ements remarquables}), and was preserved by Lawvere and 
  Menni~\cite{LawvereMenniMR2720184}.
\end{remark}

The convolution formula $X_p * X_q = X_{p+q}$ from above restricts to
nondegenerate simplices to give the following fundamental 
formula.

\begin{lemma}\label{Phipq}
  For any complete decomposition space we have
  $$
  \Phi_p * \Phi_q = \Phi_{p+q} .
  $$
\end{lemma}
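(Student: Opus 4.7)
The plan is to compute the apex of the convolution span $\Phi_p * \Phi_q$ directly as a pullback, and identify it with $\nondeg X_{p+q}$ using the inert form of the decomposition-space axiom (Theorem~\ref{thm:decomp}) together with Lemma~\ref{nondeg by edges}.

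First I would expand the convolution. By the pullback formula, the apex of $\Phi_p * \Phi_q$ is
$$
M \;:=\; X_2 \times_{X_1 \times X_1}(\nondeg X_p \times \nondeg X_q),
$$
where the horizontal map takes long edges in each factor. Since completeness makes each $\nondeg X_n \into X_n$ mono, I factor this map through the (monic) inclusion into $X_p \times X_q$. Pasting with the special reduced-cover square for the active injection $\alpha : [2] \actto [p+q]$, $0,1,2 \mapsto 0, p, p+q$ (a pullback by Theorem~\ref{thm:decomp}(3), realising $X_{p+q} \simeq X_2 \times_{X_1\times X_1}(X_p \times X_q)$), the prism lemma yields
$$
M \;\simeq\; X_{p+q} \times_{X_p \times X_q}(\nondeg X_p \times \nondeg X_q).
$$

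The key step is to recognise this pullback as $\nondeg X_{p+q}$. The map $X_{p+q} \to X_p \times X_q$ appearing in the reduced-cover square is induced by the two outer inert injections $\gamma_1^\alpha : [p] \inertto [p+q]$ and $\gamma_2^\alpha : [q] \inertto [p+q]$ (first $p+1$ and last $q+1$ vertices). Consequently the principal edges of the two restrictions $(\gamma_1^\alpha)^*\sigma$ and $(\gamma_2^\alpha)^*\sigma$ together enumerate the $p+q$ principal edges of $\sigma$. Applying Lemma~\ref{nondeg by edges} three times, the pair $((\gamma_1^\alpha)^*\sigma, (\gamma_2^\alpha)^*\sigma)$ belongs to $\nondeg X_p \times \nondeg X_q$ if and only if every principal edge of $\sigma$ is nondegenerate, if and only if $\sigma \in \nondeg X_{p+q}$. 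As the horizontal leg of the pullback is mono, this identifies $M$ with the subspace $\nondeg X_{p+q} \subset X_{p+q}$.

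Finally, under these equivalences the left leg $M \to X_1$ of the span takes $(\omega,\sigma,\tau) \in X_2 \times_{X_1\times X_1}(\nondeg X_p \times \nondeg X_q)$ to $d_1(\omega)$; via the identification with $\nondeg X_{p+q}$ this is the edge from vertex $0$ to vertex $p+q$ of the assembled simplex, which is precisely the long-edge map of $\Phi_{p+q}$. The right legs are trivially the constant maps to $1$, so $\Phi_p * \Phi_q \simeq \Phi_{p+q}$. The main obstacle is the principal-edge accounting in the third paragraph: one must verify carefully that the reduced-cover square provided by Theorem~\ref{thm:decomp}(3) for $\alpha : [2]\actto[p+q]$ really presents $X_p \times X_q$ via the two outer inert injections, so that Lemma~\ref{nondeg by edges} can be invoked symmetrically on both restrictions and on the composite simplex.
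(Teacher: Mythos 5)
Your proof is correct and takes essentially the same route as the paper, which obtains the lemma by noting that the convolution equivalence $X_p * X_q \simeq X_{p+q}$ (an instance of the special reduced-cover square of Theorem~\ref{thm:decomp}) restricts to nondegenerate simplices via Lemma~\ref{nondeg by edges}. You have simply spelled out the prism-lemma pasting and the principal-edge bookkeeping that the paper leaves implicit.
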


\begin{blanko}{M\"obius function.}\label{mu}
  The importance of the Phi functors is that the M\"obius function can be 
  described as
  $$
  \mu = \Phieven - \Phiodd = \sum_{n\in \N} (-1)^n \Phi_n .
  $$
  More precisely, it is the linear functional $\spaces_{/X_1} \to \spaces$
  given by the span
  $$
  X_1 \leftarrow \sum_{n\in \N} (-1)^n \Phi_n \to 1 .
  $$
  The minus signs does not immediately make sense at the objective level,
  but the equation that the M\"obius function is required to satisfy,
  $$
  \mu * \zeta = \epsilon
  $$
  can be rewritten by spelling out in terms of Phi functors and then moving
  the negative terms to the other side of the equation. The resulting formula
  $$
  \Phieven * \zeta = \epsilon + \Phiodd * \zeta
  $$
  makes sense at the objective level, and it can be established as an
  explicit homotopy equivalence of
  $\infty$-groupoids~\cite{Galvez-Kock-Tonks:1512.07577}.
\end{blanko}

\section{Ikeo and semi-ikeo maps}
\label{sec:ikeo}

A simplicial map $f:Y \to X$ defines a linear map on incidence algebras
$f\lowershriek : \spaces^{Y_1} \to \spaces^{X_1}$ by sending a linear
functional $Y_1 \leftarrow F \to 1$ to the linear functional $X_1
\leftarrow Y_1 \leftarrow F \to 1$. If $f: Y \to X$ is ikeo, then this
linear map will preserve the convolution product and the unit $\epsilon$ so
as to define an algebra map $\spaces^{Y_1} \to \spaces^{X_1}$. In the
situation of this paper, $f$ will not be ikeo, but it will still be {\em
semi-ikeo} (cf.~below). This condition is enough to ensure that
$f\lowershriek$ preserves the convolution product (although it will not
preserve the algebra unit $\epsilon$).

\subsection{Ikeo maps}

A simplicial map $Y\to X$ is called {\em ikeo} when for every active map 
$\alpha: [k] \actto [n]$ the square
\begin{equation}\label{ikeo alpha-instance}
\begin{tikzcd}[column sep={10em,between origins}]
 Y_{n_1} \times \cdots \times Y_{n_k} \ar[d]& Y_n \ar[d]
 \ar[l, "{(\gamma^\alpha_1,\ldots,\gamma^\alpha_k)\upperstar}"'] \\
 X_{n_1} \times \cdots \times X_{n_k} & X_n 
 \ar[l, "{(\gamma^\alpha_1,\ldots,\gamma^\alpha_k)\upperstar}"]
\end{tikzcd}
\end{equation}
is a pullback.

The following two more economical criteria are useful.

\begin{lemma}\label{ikeo n}
  For a general simplicial map $Y \to X$,
  the ikeo condition is equivalent to demanding that for each $n\geq 
  0$ the square 
  \begin{equation}\label{ikeo n-instance}
  \begin{tikzcd}[column sep={10em,between origins}]
   Y_1 \times \cdots \times Y_1 \ar[d]& Y_n \ar[d]
   \ar[l, "{(
      \rho_{1},\ldots,\rho_{n}
     )\upperstar}"'] \\
   X_1 \times \cdots \times X_1 & X_n 
   \ar[l, "{(
       \rho_{1},\ldots,\rho_{n}
     )\upperstar}"]
  \end{tikzcd}
  \end{equation}
  is a pullback.
\end{lemma}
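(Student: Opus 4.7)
The plan is to prove the forward direction by specialisation and the converse by a pasting argument. For the forward direction, I would instantiate the ikeo condition at $\alpha = \id_{[n]} : [n] \actto [n]$: the active-inert factorisation of $\id_{[n]} \circ \rho_i$ forces $n_i = 1$ and $\gamma^{\id}_i = \rho_i$, so \eqref{ikeo alpha-instance} reduces verbatim to \eqref{ikeo n-instance}.

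For the converse, assume every square \eqref{ikeo n-instance} is a pullback and fix an active map $\alpha : [k] \actto [n]$ with join decomposition $\alpha = \alpha_1 \vee \cdots \vee \alpha_k$ and associated cover $\gamma^\alpha_i : [n_i] \inertto [n]$. The strategy is to form the horizontally pasted rectangle
\[
\begin{tikzcd}[column sep={10em,between origins}]
Y_1 \times \cdots \times Y_1 \ar[d] & Y_{n_1} \times \cdots \times Y_{n_k} \ar[d] \ar[l] & Y_n \ar[d] \ar[l, "{(\gamma^\alpha_i)\upperstar}"'] \\
X_1 \times \cdots \times X_1 & X_{n_1} \times \cdots \times X_{n_k} \ar[l] & X_n \ar[l, "{(\gamma^\alpha_i)\upperstar}"]
\end{tikzcd}
\]
whose right-hand square is the $\alpha$-instance \eqref{ikeo alpha-instance} and whose left-hand square is the product over $i = 1, \ldots, k$ of the $n_i$-instances \eqref{ikeo n-instance}, the left horizontal arrows being taken as the products of the $(\rho_1, \ldots, \rho_{n_i})\upperstar$.

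The crucial identification to make is that the composite horizontal map agrees with $(\rho_1, \ldots, \rho_n)\upperstar$. This reduces to the identity $\gamma^\alpha_i \circ \rho_j = \rho_{\alpha(i-1)+j}$ in $\simplexcategory$, together with the observation that as $(i,j)$ ranges over $\{1, \ldots, k\} \times \{1, \ldots, n_i\}$ in lexicographic order, the index $\alpha(i-1)+j$ enumerates $1, \ldots, n$. Once this is in place, the outer rectangle is the $n$-instance \eqref{ikeo n-instance}, a pullback by hypothesis, while the left-hand square is a pullback as a product of the $n_i$-instance pullbacks. An application of the prism lemma in the form ``left and outer pullbacks imply the right is a pullback'' --- the same form already invoked in the proof of Theorem~\ref{thm:decomp} --- then yields that the right-hand square is a pullback, which is precisely \eqref{ikeo alpha-instance}.

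The main obstacle is the elementary but finicky bookkeeping needed to identify the horizontal composite as $(\rho_1, \ldots, \rho_n)\upperstar$; once that is done, the rest is routine pasting.
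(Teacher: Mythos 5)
Your proposal is correct and follows essentially the same route as the paper: the forward direction by specialising to $\alpha=\id_{[n]}$, and the converse by pasting the $\alpha$-instance against the product of the $n_i$-instances so that the outer rectangle is the $n$-instance, then applying the prism lemma. The index bookkeeping $\gamma^\alpha_i\circ\rho_j=\rho_{\alpha(i-1)+j}$ that you spell out is left implicit in the paper but is exactly the identification needed.
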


\begin{proof}
  Since square~\eqref{ikeo n-instance} is a special case of 
  square~\eqref{ikeo alpha-instance} where $\alpha$ is the identity map, it is clear that ikeo implies
  the condition of the lemma. Conversely suppose the condition of the lemma is satisfied, and 
  consider a general square, as on the right in this diagram:
  \[
\begin{tikzcd}[column sep=5.5em]
 (Y_1 \!\times\!\cdots\!\times\! Y_1) \times \cdots \times (Y_1 \!\times\!\cdots\!\times\! Y_1) 
 \ar[d] 
 & \ar[l] Y_{n_1} \times \cdots \times Y_{n_k} \ar[d]& Y_n \ar[d]
 \ar[l, "{(\gamma^\alpha_1,\ldots,\gamma^\alpha_k)\upperstar}"'] \\
 (X_1 \!\times\!\cdots\!\times\! X_1) \times \cdots \times (X_1 \!\times\!\cdots\!\times\! X_1) 
 & \ar[l] X_{n_1} \times \cdots \times X_{n_k} & X_n   .
 \ar[l, "{(\gamma^\alpha_1,\ldots,\gamma^\alpha_k)\upperstar}"]
\end{tikzcd}
\]
The outer rectangle is the $n$-instance of square~\eqref{ikeo n-instance}, so 
it is a pullback.
The left-hand square is the product of $k$ squares, which are the 
$n_i$-instances of \eqref{ikeo n-instance}, so it is a pullback too.
Therefore the right-hand-square is a pullback, by the prism lemma.
\end{proof}

\begin{lemma}\label{lem:0+2}
To check that a simplicial map $Y \to X$ is ikeo, it is enough to check 
  it for active maps $[0]\actto [0]$ and $[2] \actto [n]$. In other words, 
  it is enough to check that
  the squares
  \[
  \begin{tikzcd}
 1 \ar[d]& Y_0 \ar[d] \ar[l] \\
 1 & X_0 \ar[l]
\end{tikzcd}
\qquad\quad
\begin{tikzcd}[column sep={6em,between origins}]
 Y_{n_1} \times  Y_{n_2} \ar[d]& Y_n \ar[d]
 \ar[l] \\
 X_{n_1} \times X_{n_2} & X_n 
 \ar[l]
\end{tikzcd}
\]
are pullbacks for all $n=n_1+n_2$.
\end{lemma}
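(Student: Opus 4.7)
The plan is to reduce the statement via Lemma~\ref{ikeo n} to verifying that the $n$-instance square~\eqref{ikeo n-instance} is a pullback for every $n \geq 0$, and then to establish this by induction on $n$, drawing only on the two families of hypotheses available in the lemma.

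By Lemma~\ref{ikeo n}, the ikeo condition amounts to the square~\eqref{ikeo n-instance} being a pullback for every $n$. The case $n = 0$ is precisely the first hypothesis of the lemma (both top-left and bottom-left being the empty product $1$), and the case $n = 1$ is trivial since both horizontal maps are identities. For $n \geq 2$ I would argue by induction. Fix any splitting $n = n_1 + n_2$ with $n_1, n_2 \geq 1$; then the $n$-instance square arises as the horizontal composite
\[
\begin{tikzcd}[column sep={6.5em,between origins}]
Y_1^{n_1} \times Y_1^{n_2} \ar[d] & Y_{n_1} \times Y_{n_2} \ar[d] \ar[l] & Y_n \ar[d] \ar[l] \\
X_1^{n_1} \times X_1^{n_2} & X_{n_1} \times X_{n_2} \ar[l] & X_n \ar[l]
\end{tikzcd}
\]
in which the right-hand square is precisely an instance of the $[2]\actto[n]$ hypothesis, while the left-hand square is the external product of the $n_1$- and $n_2$-instances of~\eqref{ikeo n-instance}, both pullbacks by the inductive hypothesis (since $n_i < n$). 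Products of pullbacks are pullbacks, so the left square is a pullback, and the Prism Lemma, applied with the right-hand pullback square, then gives that the whole rectangle is a pullback. This rectangle is exactly the $n$-instance of~\eqref{ikeo n-instance}, closing the induction.

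I anticipate no genuine obstacle; the main content is a bookkeeping observation worth one sentence of justification, namely that the principal-edge inert cover $(\rho_1,\ldots,\rho_n)\colon [1]^{\sqcup n}\inertto [n]$ factors through any two-chart cover $[n_1]\sqcup[n_2]\inertto [n]$ by further subdivision, so that the outer principal-edge square really is the pasting of the two-chart square with the product of the two smaller principal-edge squares. Everything else is routine.
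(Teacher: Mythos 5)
Your proof is correct and takes essentially the same approach as the paper's: both arguments reduce the general ikeo square to a pasting of binary $[2]\actto[n]$ instances together with products of pullbacks. The only cosmetic difference is that the paper verifies the square for an arbitrary active $\alpha:[k]\actto[n]$ directly by peeling off one tensor factor at a time, whereas you first invoke Lemma~\ref{ikeo n} to reduce to the principal-edge squares and then induct on $n$ via a single binary split; both routes are valid and of the same length.
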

\begin{proof}
  Assuming the indicated pullback squares for $k=0$ and $k=2$, we need to consider the corresponding square for a general active map
  $\alpha: [k] \actto [n]$.
  For $k=1$ the square is a pullback since its horizontal maps are identities.
  For $k\geq 2$, the square can be decomposed as the
  pasting of squares
  \[
  \begin{tikzcd}[column sep={9em,between origins}]
 Y_{n_1} \times \cdots \times Y_{n_k} \ar[d] & 
 \phantom{xxx}\cdots\phantom{xxx} \ar[l] & \ar[l]
 Y_{n_1} \times Y_{n_2+\cdots+n_k}  \ar[d]& Y_{n_1+\cdots+n_k} \ar[d]
 \ar[l] \\
 X_{n_1} \times \cdots \times X_{n_k} & \phantom{xxx}\cdots\phantom{xxx}  \ar[l] &  X_{n_1} \times X_{n_2+\cdots+n_k} 
 \ar[l]& X_{n_1+\cdots+n_k}
 \ar[l]
\end{tikzcd}
\]
  Here the rightmost square is a ($k\!=\!2$)-instance, and the remaining
  squares to the left are products of
  ($k\!=\!1$)-instances with a ($k\!=\!2$)-instance.
  (The case $k\!=\!0$
  is not covered by this argument, which is why it has to be listed 
  separately in the lemma.)
\end{proof}

Note that the identity map $[0] \actto [0]$ gives the square
\begin{equation}\label{eq:0-square}
\begin{tikzcd}
 1 \ar[d]& Y_0 \ar[d] \ar[l] \\
 1 & X_0 \ar[l]
\end{tikzcd}
\end{equation}
which is a pullback if and only if $Y_0 \to X_0$ is an
equivalence, that is,
if the simplicial map is an `equivalence on objects'.

Note also that the identity map $[2] \actto [2]$ gives the square
\begin{equation}\label{eq:2-square}
\begin{tikzcd}
 Y_1\times Y_1 \ar[d]& Y_2 \ar[d] \ar[l, "{(d_2,d_0)}"'] \\
 X_1 \times X_1 & X_2 . \ar[l, "{(d_2,d_0)}"]
\end{tikzcd}
\end{equation}

These two squares are common to both the previous lemmas, and in fact we
have:

\begin{lemma}\label{lem:ikeo2}
If $X$ and $Y$ are decomposition spaces, then to check that a simplicial map $Y\to X$ 
  is ikeo, it is enough to check the two squares \eqref{eq:0-square} and 
  \eqref{eq:2-square}.
\end{lemma}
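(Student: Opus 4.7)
The plan is to apply Lemma~\ref{lem:0+2} as a first reduction: the ikeo condition amounts to checking \eqref{eq:0-square} (given by hypothesis) and, for every active map $\alpha : [2] \actto [n]$ with $n = n_1 + n_2$, the corresponding $(n_1, n_2)$-square appearing in that lemma. The goal is thus to deduce every such $(n_1, n_2)$-square starting from the single known case $(n_1, n_2) = (1, 1)$, which is precisely \eqref{eq:2-square}.

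For the generic case $n_1, n_2 \geq 1$, the map $\alpha$ is an active injection with $k = 2 \neq 0$, so Theorem~\ref{thm:decomp}, Condition~(3), provides special reduced-cover squares for $\alpha$ that are pullbacks in both $X$ and $Y$. I would paste the $(n_1, n_2)$-square above the SRCS for $X$ to form a tall rectangle, and observe that the same outer rectangle decomposes alternatively as the SRCS for $Y$ pasted above \eqref{eq:2-square}. Both squares of this alternate decomposition are pullbacks, so the outer rectangle is a pullback. Since the bottom square (SRCS for $X$) is itself a pullback, the prism lemma forces the top square---the desired $(n_1, n_2)$-square---to be a pullback.

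The degenerate cases $n_1 = 0$ (and symmetrically $n_2 = 0$) must be handled separately, as Condition~(3) only covers active injections. Here the map $Y_n \to Y_0 \times Y_n$ in the $(0, n)$-square is $(v_0, \id)$, where $v_0$ picks out the $0$th vertex. I would split this square into two halves by inserting the intermediate row $X_0 \times Y_n$: the top half is a pullback iff $f_0 : Y_0 \to X_0$ is an equivalence, which is the content of \eqref{eq:0-square}, while the bottom half is a tautological pullback (in $X_0 \times Y_n \to X_0 \times X_n$ the first coordinate is fixed by the second via $v_0$, so the pullback reduces to $Y_n$ on the nose).

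The main obstacle I anticipate is the bifurcation of the argument based on whether $\alpha$ is an active injection. The SRCS-based prism argument, which genuinely uses the decomposition-space structure of both $X$ and $Y$, is the natural tool for the generic case but becomes unavailable at the corners $n_i = 0$; those corners require a separate elementary decomposition relying on \eqref{eq:0-square}. Once this case split is organized, the remainder is standard prism-lemma bookkeeping, with the substantive content concentrated in invoking Condition~(3) at the right moment.
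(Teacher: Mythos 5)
Your proposal is correct and, for the generic case, is essentially the paper's own argument: the paper reduces via Lemma~\ref{lem:0+2} and then assembles the special reduced-cover squares for $Y$ and $X$ together with \eqref{eq:2-square} and the desired $(n_1,n_2)$-square into a single cube, concluding by the prism lemma; your two alternative vertical pastings of the same outer rectangle are just that cube unfolded. Where you genuinely add something is in flagging the degenerate cases $n_1=0$ or $n_2=0$: there $\alpha\colon[2]\actto[n]$ is not injective, so Condition~(3) of Theorem~\ref{thm:decomp} as stated does not literally supply the top and bottom faces, whereas the paper simply cites Condition~(3) without comment. (These faces are still pullbacks for any decomposition space, since the reduced-cover squares are pullbacks for \emph{all} active maps, not just injections --- this is the older, weaker form of the equivalence recorded in the remark after Theorem~\ref{thm:decomp} --- so the paper's proof is reparable either that way or by your direct argument.) Your separate treatment of the $(0,n)$-square is also sound: inserting the intermediate row $X_0\times Y_n$, the upper half is a pullback because $Y_0\to X_0$ is an equivalence by \eqref{eq:0-square}, and the lower half is a pullback by a projection-prism argument (compose horizontally with $\operatorname{pr}_2$ and note the outer rectangle becomes an identity square). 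So the proposal is complete, and slightly more careful than the printed proof.
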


\begin{proof}
  By Lemma~\ref{lem:0+2} it is enough to establish for each 
  $\alpha:[2]\actto [n]$ (the join of $\alpha_1:[1] \actto [n_1]$ and 
  $\alpha_2:[1] \actto [n_2]$) that the following 
  back face is a pullback:
  \[
  \begin{tikzcd}[column sep={6em,between origins}]
 Y_{n_1} \times  Y_{n_2}
 \ar[d]& Y_n \ar[rrd, pos=0.2, "{\!\alpha\upperstar}"] \ar[d]
 \ar[l] \\
 X_{n_1} \times X_{n_2} \ar[rrd, pos=0.3, 
 "{(\alpha_1\times\alpha_2)\upperstar\!\!}"']  & X_n \ar[rrd, pos=0.2, "{\!\alpha\upperstar\!\!}"] \ar[l] & Y_1 \times  Y_1& Y_2 \ar[l]
 \ar[d] \\
  & & X_1 \times X_1 & X_2  \ar[l]
  \ar[l]
  \arrow[from=1-1, to=2-3, crossing over, pos=0.3,
 "{(\alpha_1\times\alpha_2)\upperstar}"']
  \arrow[from=2-3, to=3-3, crossing over]
\end{tikzcd}
\]
  But this follows by a prism-lemma argument 
  from the fact that the front face is a pullback by 
  assumption. Indeed, the top and bottom faces are pullbacks since $Y$ and 
  $X$ are decomposition spaces (by Condition (3) in
  Theorem~\ref{thm:decomp}).
\end{proof}

The word {\em ikeo} is an acronym standing for `inner Kan and equivalence 
on objects', but these two notions
have a meaning individually, and it is actually a lemma that the notions 
match up.

Recall that a simplicial map is called {\em inner Kan} (or {\em relatively Segal})
if for each $n\geq 2$ the square
\begin{equation}\label{inner-kan-n-instance}
\begin{tikzcd}
Y_1 \times_{Y_0} \cdots \times_{Y_0} Y_1 \ar[d] & Y_n \ar[d] \ar[l] \\
X_1 \times_{X_0}\cdots \times_{X_0} X_1 & X_n \ar[l]
\end{tikzcd}
\end{equation}
is a pullback.

\bigskip

Note that if both $X$ and $Y$ are Segal spaces, then every simplicial map 
$Y \to X$ is relatively Segal.

\begin{lemma}
  A map is ikeo if and only if it is inner Kan and an equivalence on 
  objects.
\end{lemma}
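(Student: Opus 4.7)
The plan is to use Lemma~\ref{ikeo n} to reduce ikeo to the squares \eqref{ikeo n-instance} for each $n\geq 0$, and then pass between these squares and the inner Kan squares \eqref{inner-kan-n-instance} via the prism lemma, interpolating through the iterated fibre product $Y_1\times_{Y_0}\cdots\times_{Y_0}Y_1$.

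First I would handle the easy cases. The $n=0$ instance of \eqref{ikeo n-instance} is precisely the square \eqref{eq:0-square}, which is a pullback if and only if $Y_0\to X_0$ is an equivalence. The $n=1$ instance has identities for its horizontal maps and is therefore automatically a pullback. So the content of the lemma concerns the squares for $n\geq 2$.

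For each $n\geq 2$, factor the ikeo square \eqref{ikeo n-instance} as the horizontal pasting
\[
\begin{tikzcd}[column sep={5.5em,between origins}]
Y_1 \times \cdots \times Y_1 \ar[d] & Y_1 \times_{Y_0} \cdots \times_{Y_0} Y_1 \ar[d] \ar[l, hookrightarrow] & Y_n \ar[d] \ar[l] \\
X_1 \times \cdots \times X_1 & X_1 \times_{X_0} \cdots \times_{X_0} X_1 \ar[l, hookrightarrow] & X_n \ar[l]
\end{tikzcd}
\]
where the right-hand square is the inner Kan square \eqref{inner-kan-n-instance}. The key subsidiary observation is that the left-hand square is a pullback whenever $Y_0\to X_0$ is an equivalence. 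This is because the inclusion of the iterated fibre product into the iterated product is itself a pullback against the diagonal $Y_0^{n-1}\to (Y_0\times Y_0)^{n-1}$ (and similarly for $X$), and one can then compute directly that equivalence on objects forces this pasting to be a pullback; alternatively, one pastes the left square against the pullback square expressing the iterated fibre product and uses the prism lemma together with the equivalence $Y_0\simeq X_0$.

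With this observation in hand, both directions follow by prism-lemma pasting. If $Y\to X$ is inner Kan and an equivalence on objects, both squares in the factorisation above are pullbacks, so the outer rectangle (the ikeo $n$-instance) is a pullback; together with the trivial $n=1$ case and the $n=0$ case given by equivalence on objects, Lemma~\ref{ikeo n} yields ikeo. Conversely, if $Y\to X$ is ikeo, the $n=0$ case gives equivalence on objects, which makes the left square a pullback; since the outer rectangle (the ikeo $n$-instance) is also a pullback, the prism lemma forces the right square, i.e.~the inner Kan square \eqref{inner-kan-n-instance}, to be a pullback for each $n\geq 2$. The main obstacle, though a minor one, is the subsidiary observation about the left square: once it is checked carefully in terms of diagonals and fibre products, the rest is a mechanical application of the prism lemma.
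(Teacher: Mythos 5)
Your proposal is correct and follows essentially the same route as the paper: interpolate the ikeo square through the iterated fibre product $Y_1\times_{Y_0}\cdots\times_{Y_0}Y_1$, observe that the left-hand comparison square is a pullback (this is exactly the paper's Lemma~\ref{prod and fib prod}, which in fact only needs $Y_0\to X_0$ mono rather than an equivalence), and conclude in both directions by the prism lemma. The paper writes out only the $n=2$ case and leaves the rest to the reader, whereas you treat all $n$ uniformly, but the argument is the same.
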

\begin{proof}
  The $n=0$ case of the ikeo condition says that the map is an equivalence
  on objects. We show that the $n=2$ instance of \eqref{ikeo n-instance} is
  a pullback if and only if the $n=2$ instance of
  \eqref{inner-kan-n-instance} is a pullback, and leave the rest to the
  reader.
  In the prism  diagram
\[
\begin{tikzcd}
Y_1 \times  Y_1 \ar[d] &Y_1 \times_{Y_0} Y_1 \ar[l] \ar[d] & Y_2 \ar[d] \ar[l] \\
X_1 \times X_1 &X_1 \times_{X_0} X_1 \ar[l] & X_2 \ar[l]
\end{tikzcd}
\]
the left-hand square is a pullback because $Y_0 \to X_0$ is mono, by
Lemma~\ref{prod and fib prod} below. By the prism lemma the right-hand
square is a pullback if and only if the outer rectangle is a pullback.
\end{proof}

In fact the key argument in the proof gives more generally:
\begin{lemma}\label{ikeo<=>inner Kan n instance}
  Let $Y \to X$ be a simplicial map such that $Y_0 \to X_0$ is mono,
  then 
  \[
  \begin{tikzcd}
  Y_1 \times_{Y_0}  Y_1 \ar[d] & Y_2 \dlpullback \ar[d] \ar[l] \\
  X_1 \times_{X_0} X_1 & X_2 \ar[l]
  \end{tikzcd} \qquad \Leftrightarrow \qquad
  \begin{tikzcd}
  Y_1 \times  Y_1 \ar[d] & Y_2 \dlpullback \ar[d] \ar[l] \\
  X_1 \times X_1 & X_2  , 
  \ar[l]
  \end{tikzcd}
  \]
  and similarly for all $n\geq 2$.
\end{lemma}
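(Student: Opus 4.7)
The plan is to extend the two-factor prism argument from the proof of Lemma~\ref{lem:ikeo2} to $n$ factors. I would consider the prism
\[
\begin{tikzcd}[column sep={9em,between origins}]
Y_1^{\times n} \ar[d] & Y_1 \times_{Y_0} \cdots \times_{Y_0} Y_1 \ar[l] \ar[d] & Y_n \ar[d] \ar[l] \\
X_1^{\times n} & X_1 \times_{X_0} \cdots \times_{X_0} X_1 \ar[l] & X_n \ar[l]
\end{tikzcd}
\]
whose right-hand square is the fibre-product square of the statement (the left one in the displayed equivalence) and whose outer rectangle is the plain-product square (the right one in the displayed equivalence). Once the left-hand square is shown to be a pullback, the Prism Lemma immediately yields the asserted equivalence: the right-hand square is a pullback if and only if the outer rectangle is.

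The key remaining step is thus to prove that the left-hand square is a pullback. This is an $n$-factor generalisation of Lemma~\ref{prod and fib prod}. Unpacking, the pullback of $Y_1^{\times n} \to X_1^{\times n}$ along the inclusion $X_1 \times_{X_0} \cdots \times_{X_0} X_1 \to X_1^{\times n}$ is the iterated fibre product $Y_1 \times_{X_0} \cdots \times_{X_0} Y_1$ formed over the composites $Y_1 \to Y_0 \to X_0$. Since $Y_0 \to X_0$ is mono, each binary fibre product $Y_1 \times_{X_0} Y_1$ is canonically equivalent to $Y_1 \times_{Y_0} Y_1$, and the iterated version is then $Y_1 \times_{Y_0} \cdots \times_{Y_0} Y_1$, as required.

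The main (mild) obstacle is to handle this iterated fibre product cleanly at the $\infty$-groupoid level; I would do it by induction on $n$, using the binary case of Lemma~\ref{prod and fib prod} together with the observation that pulling back a mono is again a mono, so the hypothesis is preserved at each step of the induction. Once this identification of the two iterated fibre products is in hand, the Prism Lemma closes the argument, and the same argument plainly adapts to the general $n\geq 2$ stated in the lemma.
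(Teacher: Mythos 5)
Your proposal is correct and follows essentially the same route as the paper: the paper proves the $n=2$ case of the preceding lemma by exactly this prism (plain products, fibre products over $Y_0$ resp.\ $X_0$, and $Y_2\to X_2$), with the left-hand square being a pullback by Lemma~\ref{prod and fib prod} (itself proved via Lemma~\ref{pbk-change}, replacing $\times_{Y_0}$ by $\times_{X_0}$ using that $Y_0\to X_0$ is mono), and then invokes the Prism Lemma, leaving the $n$-factor generalisation to the reader just as you sketch it.
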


\subsection{Semi-ikeo maps}

The importance of ikeo maps is that they induce algebra homomorphisms at
the level of incidence algebras.
In our situation we will not have ikeo maps but only
something weaker, where the convolution product is preserved but
the convolution unit $\epsilon$ is not.

Provisionally we call a
simplicial map $f: Y \to X$ {\em semi-ikeo} when for every
active injection $\alpha: [k] \actto [n]$ between nonzero
ordinals, the square~\eqref{ikeo alpha-instance} is a pullback.

Observe that there are semi-ikeo versions of Lemma~\ref{ikeo n},
characterising semi-ikeo maps in terms of $n\geq 1$,
of Lemma~\ref{lem:0+2}, referring only to active injections $[2]\actto [n]$,
and of Lemma~\ref{lem:ikeo2}, saying that if both $X$ and $Y$ are
already known to be decomposition spaces then the semi-ikeo
condition can be checked on the single square
\[
\begin{tikzcd}
  Y_1\times Y_1 \ar[d]& Y_2 \ar[d] \ar[l, "{(d_2,d_0)}"'] \\
  X_1 \times X_1 & X_2 . \ar[l, "{(d_2,d_0)}"]
\end{tikzcd}
\]

\begin{prop}\label{semiikeo / decomp is decomp}
  Given a semi-ikeo simplicial map between simplicial spaces $Y \to X$, if 
  $X$ is a decomposition space, then also $Y$ is a decomposition space.
\end{prop}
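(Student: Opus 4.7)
The plan is to invoke Theorem~\ref{thm:decomp}(3): it suffices to show that for every active injection $\alpha: [k]\actto[n]$ with $k\neq 0$, the special reduced-cover square (SRCS) for $\alpha$ in $Y$ is a pullback. Fix such an $\alpha$. Three pullback squares are available: the semi-ikeo square $A$ for $\alpha$ itself (a pullback by hypothesis, since $\alpha$ is an active injection with $k\neq 0$); the semi-ikeo square $B$ for the identity $\id: [k]\actto[k]$, which is itself an active injection between nonzero ordinals and whose horizontal maps are precisely the reduced-cover maps $(\rho_1,\ldots,\rho_k)\upperstar$; and the $X$-SRCS $C$ for $\alpha$, which is a pullback because $X$ is a decomposition space.

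The argument then proceeds by a vertical pasting. Paste $A$ above $C$: both pieces are pullbacks, so the outer rectangle $R$ is a pullback. Its top row is $Y_n\to\prod_i Y_{n_i}$, its bottom row is $X_k\to X_1\times\cdots\times X_1$, and its outer verticals are $\alpha\upperstar\circ f$ and $(\alpha_1\times\cdots\times\alpha_k)\upperstar\circ f$. Now paste the target $Y$-SRCS above $B$: the outer rectangle has the same top and bottom rows as $R$, while its outer verticals are $f\circ\alpha\upperstar$ and $f\circ(\alpha_1\times\cdots\times\alpha_k)\upperstar$. Simplicial naturality of $f$ identifies these with the outer verticals of $R$, so the two outer rectangles coincide and the second is therefore a pullback. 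Since $B$ is a pullback, the prism lemma applied vertically forces the $Y$-SRCS on top to be a pullback, establishing Condition~(3) for $Y$.

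The main point to recognise is that the identity $\id: [k]\actto[k]$ qualifies as a semi-ikeo input, yielding the crucial square $B$; once this is observed, the rest of the argument is a single application of the prism lemma, with no further combinatorial bookkeeping about the decomposition of $\alpha$ into its join components.
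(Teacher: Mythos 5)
Your proof is correct and takes essentially the same route as the paper: the paper performs exactly the same double pasting (the semi-ikeo square for $\alpha$ against the $X$-SRCS on one side, the semi-ikeo square for $\id:[k]\actto[k]$ against the target $Y$-SRCS on the other, the two outer rectangles agreeing by naturality of $f$) and concludes with the prism lemma via Condition~(3) of Theorem~\ref{thm:decomp}. Your identified key point --- that the identity $[k]\actto[k]$ is a legitimate semi-ikeo input supplying the square $B$ --- is precisely the crux of the paper's argument as well.
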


\begin{proof}
  By Theorem~\ref{thm:decomp},
  it is enough to establish that the special reduced-cover square
    \[
  \begin{tikzcd}[column sep={10em,between origins}]
  Y_1 \times \cdots \times Y_1 & \ar[l]   Y_k   \\
  Y_{n_1} \times \cdots \times Y_{n_k} \ar[u] & Y_n \ar[u, 
  "{\alpha\upperstar}"'] \ar[l, 
  "{(\gamma^\alpha_1,\ldots,\gamma^\alpha_k)\upperstar}"]
  \end{tikzcd}
  \]
  is a pullback for every active injection $\alpha : [k] \actto [n]$ with 
  $k\neq 0$. We have
      \[
  \begin{tikzcd}[column sep={9em,between origins}]
  X_1 \times \cdots \times X_1 & \ar[l]   X_k   \\
  Y_1 \times \cdots \times Y_1 \ar[u] & \ar[l]   Y_k \ar[u]   \\
  Y_{n_1} \times \cdots \times Y_{n_k} \ar[u] & Y_n \ar[u, 
  "{\alpha\upperstar}"'] \ar[l, 
  "{(\gamma^\alpha_1,\ldots,\gamma^\alpha_k)\upperstar}"]
  \end{tikzcd}
  \qquad
  =
  \qquad
  \begin{tikzcd}[column sep={9em,between origins}]
  X_1 \times \cdots \times X_1 & \ar[l]   X_k   \\
  X_{n_1} \times \cdots \times X_{n_k} \ar[u] &   \ar[l, "{(\gamma^\alpha_1,\ldots,\gamma^\alpha_k)\upperstar}"]
   X_n \ar[u, "{\alpha\upperstar}"']   \\
  Y_{n_1} \times \cdots \times Y_{n_k} \ar[u] & Y_n .
  \ar[u] \ar[l, "{(\gamma^\alpha_1,\ldots,\gamma^\alpha_k)\upperstar}"]
  \end{tikzcd}
  \]
  On the right, the top square is a pullback since $X$ is a decomposition 
  space, and the bottom square is a pullback since $Y\to X$ is semi-ikeo.
  So the outer rectangle (either on the left or on the right) is a 
  pullback. But on the left, the top square is a pullback since $Y \to X$ 
  is semi-ikeo. So it follows from the prism lemma that also the bottom 
  square is a pullback, which is what we needed to prove.
\end{proof}

Note also that Lemma \ref{ikeo<=>inner Kan n instance} actually establishes
the following result.

\begin{lemma}\label{if mono, semiikeo<=>ikeo}
  If $Y \to X$ is mono on objects, then semi-ikeo is equivalent to
  relatively Segal.
\end{lemma}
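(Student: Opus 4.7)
The plan is to reduce both conditions to the same family of pullback squares via the economy characterisations already at hand. First I would invoke the semi-ikeo analogue of Lemma~\ref{ikeo n} (the prism-lemma argument given there carries over verbatim, except that $n = 0$ is excluded): the map $f\colon Y \to X$ is semi-ikeo if and only if for every $n \geq 1$ the square
\[
\begin{tikzcd}[column sep={10em,between origins}]
 Y_1 \times \cdots \times Y_1 \ar[d] & Y_n \ar[d] \ar[l, "{(\rho_1,\ldots,\rho_n)\upperstar}"'] \\
 X_1 \times \cdots \times X_1 & X_n \ar[l, "{(\rho_1,\ldots,\rho_n)\upperstar}"]
\end{tikzcd}
\]
is a pullback. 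The case $n = 1$ is automatic, since then both horizontal maps are identities, so the real content lies in the cases $n \geq 2$.

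Next I would apply Lemma~\ref{ikeo<=>inner Kan n instance}, whose displayed statement was only written for $n = 2$ but explicitly claims to hold ``similarly for all $n \geq 2$'': under the hypothesis that $Y_0 \to X_0$ is mono, the square above is a pullback if and only if its fibered-product analogue
\[
\begin{tikzcd}[column sep={10em,between origins}]
 Y_1 \times_{Y_0} \cdots \times_{Y_0} Y_1 \ar[d] & Y_n \ar[d] \ar[l] \\
 X_1 \times_{X_0} \cdots \times_{X_0} X_1 & X_n \ar[l]
\end{tikzcd}
\]
is a pullback. But this last square is exactly the $n$-instance of the relatively Segal condition~\eqref{inner-kan-n-instance}, so the two conditions coincide for all $n \geq 2$, and the equivalence of semi-ikeo and relatively Segal follows.

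There is essentially no obstacle here: the lemma is a direct repackaging of its two immediate predecessors. The only point requiring verification is that the $n = 2$ case of Lemma~\ref{ikeo<=>inner Kan n instance} extends to all $n \geq 2$, which one sees by iterating the same argument --- writing $Y_1 \times_{Y_0} \cdots \times_{Y_0} Y_1$ as a pullback of $Y_1 \times \cdots \times Y_1$ along the diagonal and using that $Y_0 \to X_0$ being mono makes the comparison square between the two diagonals a pullback, so the prism lemma transfers the pullback condition between the two squares.
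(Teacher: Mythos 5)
Your argument is correct and is essentially the paper's own: the paper simply observes that Lemma~\ref{ikeo<=>inner Kan n instance} already establishes the result, and your write-up spells out the same two reductions (the semi-ikeo analogue of Lemma~\ref{ikeo n} to pass to the $(\rho_1,\ldots,\rho_n)\upperstar$ squares for $n\geq 1$, then the plain-product versus fibre-product comparison for $n\geq 2$ under the mono-on-objects hypothesis). Your closing remark correctly identifies and fills the only detail left implicit in the paper, namely the extension of the $n=2$ comparison to all $n\geq 2$.
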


\begin{remark}
  Without the mono condition, it is not true that relatively Segal implies 
  semi-ikeo. For example, any simplicial map between Segal spaces is 
  relatively Segal. Now take a map from a Segal space $Y$ to the terminal
  simplicial set, then the semi-ikeo condition says that $Y_1 \times Y_1 
  \leftarrow Y_2$ is an equivalence, or equivalently $Y_1\times Y_1 
  \leftarrow Y_1 \times_{Y_0} Y_1$ is an equivalence. Of course this is 
  not generally true (but is clearly true if $Y_0=1$).
\end{remark}

\begin{example}
  A morphism of posets  $f:Y \to X$ is ikeo if and only if it is a 
  bijection on objects. (It does not have to be an isomorphism: for 
  example, $Y$ could be the discrete poset of objects of $Y$.) To be 
  semi-ikeo, it is enough to be injective on objects.
\end{example}

%

\section{Full inclusions and convexity}

\subsection{A few standard facts about monomorphisms of spaces}

Recall that a map of spaces $f:T \to S$ is called a {\em monomorphism} (or just {\em mono}, 
for short) when it is $(-1)$-truncated. That is, its fibres are $(-1)$-truncated, meaning they are each either
contractible or empty. We denote monomorphisms by $\into$.
Alternatively, $f$ is a mono when
\[
\begin{tikzcd}
T \ar[r, "="] \ar[d, "="'] & T \ar[d, "f"]  \\
T \ar[r, "f"'] & S
\end{tikzcd}
\]
is a pullback. 
This last characterisation is just a reformulation of the standard fact that
\begin{lemma}\label{mono if diag}
  $f:T \to S$ is mono if and only if the diagonal map $T \to T \times_S 
T$ is an equivalence.
\end{lemma}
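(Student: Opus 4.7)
The plan is to observe that the lemma is essentially a direct translation, via the universal property of the pullback, of the characterisation of mono recalled just above (that the square with identities on two sides and $f$ on the other two is a pullback). No substantial obstacle arises; the argument is purely formal.

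I would proceed as follows. First note that the square in question commutes tautologically---both composites to $S$ equal $f$---so by the universal property of the pullback $T \times_S T$ of $f$ against itself, the square determines a canonical map $\delta : T \to T \times_S T$ whose two projections to $T$ are both equal to $\id_T$. By definition, $\delta$ is the diagonal map. Conversely, the stated square being a pullback is precisely the assertion that $T$ itself already enjoys the universal property of $T \times_S T$, which holds if and only if the canonical comparison map from $T$ to the pullback is an equivalence; but this comparison map is exactly $\delta$. Thus the two characterisations---the square being a pullback, and the diagonal being an equivalence---are literally the same statement, proving the lemma.

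The only point to verify carefully is the identification of the map induced by the universal property with the diagonal map; this is immediate, since both are characterised uniquely by the property that their two projections to $T$ are identities.
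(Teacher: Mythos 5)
Your argument correctly establishes the purely formal equivalence between two statements: \emph{the self-pullback square is a pullback} and \emph{the diagonal $T \to T\times_S T$ is an equivalence}. Indeed, the diagonal is by construction the comparison map from $T$ to the pullback of $f$ against itself, and a commuting square is a pullback precisely when its comparison map is an equivalence. This identification is correct and is exactly what the paper means when it says the square characterisation ``is just a reformulation of'' the lemma.

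The problem is which statement you take as the definition of \emph{mono}. The paper defines a monomorphism of spaces as a $(-1)$-truncated map, i.e.\ one whose fibres are each empty or contractible; the square-is-pullback condition is introduced only as an ``alternative'' characterisation, and the paper's stated justification for that alternative is this very lemma (which it in turn grounds in Lurie's general fact that $f$ is $n$-truncated if and only if its diagonal is $(n-1)$-truncated, applied at $n=-1$). So by starting from the square characterisation you have proved that the two \emph{reformulations} agree, but you have not connected either of them to the actual definition; within the paper's logical order your argument is circular. To close the gap you would need one further step tying the diagonal condition to the fibrewise definition --- for instance: a map of spaces over $S$ is an equivalence if and only if it is so on all fibres, the fibre of the diagonal over $s\in S$ is the diagonal $f^{-1}(s)\to f^{-1}(s)\times f^{-1}(s)$, and the diagonal of a space $F$ is an equivalence if and only if $F$ is empty or contractible. (This is essentially the argument the paper runs in the proof of the subsequent Lemma~\ref{lem:diag pbk}, where it reduces \emph{to} the present lemma.) Alternatively, simply cite the truncation fact from Lurie as the paper does.
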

This in turn is a special case of the general fact that a map
$f:T \to S$ is $n$-truncated if and only if its 
diagonal map $T \to T\times_S T$ is $(n-1)$-truncated (see 
Lurie~\cite[5.5.6.15]{Lurie:HTT}).

\begin{lemma}\label{lem:diag pbk}
  A map of spaces $f:T \to S$ is mono if and only if the square
  \[
  \begin{tikzcd}
  T \times T \ar[d, "f\times f"'] & T \ar[l, "\text{\rm diag}"'] \ar[d, "f"]  \\
  S \times S & S \ar[l, "\text{\rm diag}"]
  \end{tikzcd}
  \]
  is a pullback.
\end{lemma}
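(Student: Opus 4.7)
The plan is to reduce the claim to Lemma~\ref{mono if diag} by explicitly identifying the pullback of the cospan $T\times T \xrightarrow{f\times f} S\times S \xleftarrow{\text{diag}} S$. Denote this pullback by $P$.

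First I would show that $P \simeq T\times_S T$ in a canonical way. By definition a point of $P$ is a triple $(t_1,t_2,s)$ together with an equivalence $(f(t_1),f(t_2)) \simeq (s,s)$ in $S\times S$, which amounts to a pair of paths $f(t_1)\simeq s$ and $f(t_2)\simeq s$. Composing these paths gives a path $f(t_1)\simeq f(t_2)$, and the map $P \to T\times_S T$ that forgets $s$ has contractible fibres (the fibre over $(t_1,t_2,\gamma\colon f(t_1)\simeq f(t_2))$ is the space of factorisations of $\gamma$ through a chosen $s$, which is contractible). Under this identification, the comparison map $T \to P$ induced by $\text{diag}\colon T\to T\times T$ and $f\colon T\to S$ corresponds precisely to the diagonal $T \to T\times_S T$.

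With this identification in hand, the square in the statement is a pullback if and only if the comparison map $T\to P$ is an equivalence, which in turn is equivalent to the diagonal $T\to T\times_S T$ being an equivalence. By Lemma~\ref{mono if diag}, this last condition is exactly the condition that $f$ be mono, completing the proof.

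There is no genuine obstacle here; the only thing to be careful about is the identification $P \simeq T\times_S T$, which is a standard manipulation of iterated (homotopy) pullbacks: the square factors as
\[
\begin{tikzcd}
T\times_S T \ar[r] \ar[d] & S \ar[d,"\text{diag}"] \\
T\times T \ar[r,"f\times f"'] & S\times S
\end{tikzcd}
\]
so that $P$ literally is $T\times_S T$ by one of the standard presentations of the latter (pulling back the diagonal of $S$ along $f\times f$). Once this is recorded, the rest is immediate from Lemma~\ref{mono if diag}.
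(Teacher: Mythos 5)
Your proof is correct, but it takes a different route from the paper's. You identify the pullback of the cospan $T\times T \xrightarrow{f\times f} S\times S \xleftarrow{\mathrm{diag}} S$ globally with $T\times_S T$ (pulling back the diagonal of $S$ along $f\times f$ is indeed a standard presentation of the fibre product), observe that under this identification the comparison map $T\to P$ is the diagonal $T\to T\times_S T$, and then apply Lemma~\ref{mono if diag} once, directly to $f$. The paper instead argues fibrewise over $S$: a square is a pullback if and only if it induces equivalences on fibres, and the induced map on fibres over $s\in S$ is $f^{-1}(s)\to f^{-1}(s)\times f^{-1}(s)$, i.e.\ the diagonal of $f^{-1}(s)\to 1$; Lemma~\ref{mono if diag} is then applied to each fibre, and ranging over all $s$ recovers the condition that $f$ is mono. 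The two arguments are close cousins --- each is ultimately an instance of the diagonal characterisation of truncatedness --- but they invoke different background facts: yours needs the identification of the iterated pullback (which you rightly flag as the only point requiring care), while the paper's needs the fibrewise criterion for a square of spaces to be a pullback. Your version is slightly more uniform and avoids quantifying over points of $S$; the paper's is arguably more in the spirit of the "pleasant" elementary pullback manipulations it favours elsewhere. Both are complete.
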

\begin{proof}
The square is a pullback if and only if, for each $s\in S$, the induced map on fibres
$$
(f\times f)^{-1}(s,s) \leftarrow f^{-1}(s)
$$
is an equivalence. But this map is the diagonal of $f^{-1}(s)\to 1$, so it
is an equivalence if and only if $f^{-1}(s)\to 1$ is mono, by
Lemma~\ref{mono if diag}. This condition for each $s\in S$ is the condition
for $f$ to be mono.
\end{proof}

The following easy lemma is standard; we state it since it is used several 
times. (We include the proof because it is pleasant.)
\begin{lemma}\label{pbk-change}
  In the situation
  \[
  \begin{tikzcd}
   & Y \ar[d] & \\
  X \ar[r] & T \ar[rd, into, "f"] & \\
  && S ,
  \end{tikzcd}
  \]
  when $f$ is mono, then the canonical map $X \times_T Y \to X\times_S Y$ is an equivalence.
\end{lemma}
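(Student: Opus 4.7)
The plan is to realise the canonical comparison $X \times_T Y \to X \times_S Y$ as the outer rectangle of a prism whose two subsquares are pullbacks, and then invoke the Prism Lemma. The key input is the standard rewriting of a fibre product as a pullback along a diagonal: for any cospan $X \to T \leftarrow Y$ the square
\[
\begin{tikzcd}
X \times_T Y \ar[r] \ar[d] & T \ar[d] \\
X \times Y \ar[r] & T \times T
\end{tikzcd}
\]
is a pullback, where the right vertical is the diagonal of $T$ and the bottom map is the product of the two structure maps; and likewise with $T$ replaced by $S$ (using the composites $X \to T \xrightarrow{f} S$ and $Y \to T \xrightarrow{f} S$).

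I would then paste this against the diagonal square provided by Lemma~\ref{lem:diag pbk}, which states precisely that
\[
\begin{tikzcd}
T \ar[r, "f"] \ar[d] & S \ar[d] \\
T \times T \ar[r, "f\times f"'] & S \times S
\end{tikzcd}
\]
is a pullback because $f$ is mono. In the resulting prism
\[
\begin{tikzcd}
X \times_T Y \ar[r] \ar[d] & T \ar[r, "f"] \ar[d] & S \ar[d] \\
X \times Y \ar[r] & T \times T \ar[r, "f\times f"'] & S \times S
\end{tikzcd}
\]
both subsquares are pullbacks, so by the Prism Lemma the outer rectangle is a pullback. But the outer rectangle exhibits its apex as the pullback of the cospan $X \times Y \to S \times S \leftarrow S$, which by the same diagonal rewriting applied over $S$ is exactly $X \times_S Y$. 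The equivalence produced by the prism is then the canonical comparison of the statement.

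I do not anticipate any serious obstacle: everything reduces to one application of the Prism Lemma together with Lemma~\ref{lem:diag pbk}. The only piece of bookkeeping is to verify that the equivalence $X \times_T Y \simeq X \times_S Y$ produced by the prism coincides with the canonical map under which the comparison is formed, but this is immediate from unwinding the universal properties of the two pullbacks.
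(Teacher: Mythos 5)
Your argument is correct, but it follows a different route from the paper. The paper's proof pastes a $2\times 2$ grid of squares whose top-left square defines $P = X\times_T Y$ and whose outer rectangle computes $X\times_S Y$: three of the four small squares are trivial because they contain identities, and the only genuine input is that the square with two identities and two copies of $f$ is a pullback precisely because $f$ is mono (the characterisation stated just before Lemma~\ref{mono if diag}). You instead rewrite each fibre product as a pullback of the plain product along a diagonal, $X\times_T Y \simeq (X\times Y)\times_{T\times T} T$, and paste that against the diagonal-versus-$f\times f$ square of Lemma~\ref{lem:diag pbk}. Both proofs come down to a single application of the prism lemma; yours costs the formation of $T\times T$ and $S\times S$ and invokes the slightly heavier mono characterisation of Lemma~\ref{lem:diag pbk}, but in exchange it makes the identification of the resulting equivalence with the canonical comparison map especially transparent, since both fibre products are exhibited by the same universal property over $S\times S$. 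Either argument is a complete proof.
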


\begin{proof}
  In the diagram
  \[
  \begin{tikzcd}
  P \drpullback \ar[d] \ar[r] & Y \drpullback\ar[d] \ar[r, "="] & Y \ar[d]  \\
  X \drpullback\ar[d, "="'] \ar[r] & T \drpullback\ar[d, "="'] \ar[r, "="] 
  & T \ar[d, into, "f"] \\
  X \ar[r] & T \ar[r, into, "f"'] & S
  \end{tikzcd}
  \]
  we see that $P$ is both the pullbacks.
\end{proof}

\begin{lemma}\label{prod and fib prod}
  If a simplicial map $Y \to X$ is mono on objects, then the square
  \[
  \begin{tikzcd}
  Y_1 \times Y_1 \ar[d] & Y_1 \times_{Y_0} Y_1 \ar[d] \ar[l]  \\
  X_1 \times X_1 & X_1 \times_{X_0} X_1 \ar[l]
  \end{tikzcd}
  \]
  is a pullback.
\end{lemma}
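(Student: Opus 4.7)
The plan is to reduce the claim to Lemma~\ref{pbk-change} via a pullback manipulation. What needs to be shown is that the canonical comparison map
\[
Y_1 \times_{Y_0} Y_1 \longrightarrow (Y_1 \times Y_1) \times_{X_1 \times X_1} (X_1 \times_{X_0} X_1)
\]
is an equivalence of spaces; this is just the assertion that the square in the statement is a pullback. So the first step is to compute the right-hand iterated pullback. An element of it is the data of a pair $(y_1,y_2) \in Y_1 \times Y_1$ together with the data of an element $(x_1,x_2) \in X_1 \times_{X_0} X_1$ matching their images under $f: Y_1 \to X_1$. Absorbing the identifications, this right-hand pullback is canonically equivalent to the fiber product $Y_1 \times_{X_0} Y_1$ whose two structure maps $Y_1 \to X_0$ are the composites $Y_1 \xrightarrow{d_i} Y_0 \xrightarrow{f_0} X_0$ for $i=1,0$ respectively (this is straightforward from the universal property, and can be seen by pasting the pullback square defining $X_1 \times_{X_0} X_1$ with the outer span).

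The second step is to compare the two fiber products $Y_1 \times_{X_0} Y_1$ and $Y_1 \times_{Y_0} Y_1$. Both structure maps $Y_1 \to X_0$ factor through $Y_0$ via $f_0: Y_0 \to X_0$, so we are exactly in the setup of Lemma~\ref{pbk-change}, with the mono being $f_0: Y_0 \into X_0$ (mono by hypothesis) and with $X := Y_1$, $Y := Y_1$ mapping to $Y_0$ via $d_1$ and $d_0$ respectively. That lemma immediately yields
\[
Y_1 \times_{Y_0} Y_1 \isopil Y_1 \times_{X_0} Y_1 ,
\]
and composing with the identification of the previous step shows the comparison map is an equivalence, as required.

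I do not expect a serious obstacle here. The only thing to be careful about is the bookkeeping of which face maps are used to form each fiber product, to ensure that the two occurrences of $Y_1 \to X_0$ really do factor through $Y_0$ (so that Lemma~\ref{pbk-change} applies to \emph{both} legs). Once this is observed, the proof is essentially a single invocation of Lemma~\ref{pbk-change}, and the argument adapts verbatim to the analogous statement for $n$-fold fiber products $Y_1 \times_{Y_0} \cdots \times_{Y_0} Y_1$ versus $Y_1 \times_{X_0} \cdots \times_{X_0} Y_1$, which is what is needed in the proof of Lemma~\ref{ikeo<=>inner Kan n instance} for general $n\geq 2$.
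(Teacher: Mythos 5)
Your proof is correct and follows essentially the same route as the paper: both arguments combine a pullback-pasting step identifying the relevant fibre product over $X_0$ with an application of Lemma~\ref{pbk-change} (using that $Y_0\to X_0$ is mono) to pass between $Y_1\times_{X_0}Y_1$ and $Y_1\times_{Y_0}Y_1$; you merely perform the two steps in the opposite order. Your closing remark about the $n$-fold generalisation is also consistent with how the paper uses the lemma.
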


\begin{proof}
  We can use $Y_1 \times_{X_0} Y_1$ instead of $Y_1 \times_{Y_0} Y_1$,
  by Lemma~\ref{pbk-change}.
  Now write the prism
    \[
  \begin{tikzcd}
  Y_1 \times Y_1 \ar[d] & Y_1 \times_{X_0} Y_1  \ar[d] \ar[l]  \\
  X_1 \times X_1 \ar[d] & X_1 \times_{X_0} X_1 \ar[l] \ar[d]  \\
  X_0 \times X_0 & X_0   .
  \ar[l, "\text{diag}"]
  \end{tikzcd}
  \]
  Here both the bottom square and the outer rectangle are pullbacks,
  so it follows (by the prism lemma) that the top square is a pullback.
\end{proof}

\subsection{Full inclusions}

A simplicial map $Y \to X$ is called {\em fully faithful} when for each $n\geq 
0$ the square
\[
\begin{tikzcd}
Y_0 \times \cdots \times Y_0 \ar[d] & Y_n \ar[l] \ar[d]  \\
X_0 \times \cdots \times X_0 & X_n \ar[l]
\end{tikzcd}
\]
is a pullback. The horizontal maps send an $n$-simplex to the 
$(n+1)$-tuple of vertices.

Note that in the case where $X$ and $Y$ are Segal spaces, this condition 
is equivalent to the $n=2$ case, so for Segal spaces the definition agrees
with the usual definition of fully faithful.

\bigskip

A {\em full inclusion} of simplicial sets is by definition a fully faithful 
simplicial map which is furthermore a monomorphism in simplicial degree 
$0$.

\bigskip

Recall (from~\cite{Galvez-Kock-Tonks:1512.07577}) that a simplicial map is 
called {\em conservative} if it is cartesian on all degeneracy maps. (Note that for 
simplicial maps between decomposition spaces, this can be measured on
the first degeneracy map $s_0 : X_0 \to X_1$ alone.)

\begin{lemma}
  A full inclusion is conservative.
\end{lemma}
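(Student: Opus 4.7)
The plan is to verify that for each degeneracy $s_i \colon X_n \to X_{n+1}$, the naturality square
\[
\begin{tikzcd}
Y_n \ar[d] \ar[r, "s_i"] & Y_{n+1} \ar[d]  \\
X_n \ar[r, "s_i"'] & X_{n+1}
\end{tikzcd}
\]
is a pullback, and the idea is to detect pullbacks by passing to the space of vertex tuples. I would paste the naturality square to the fully-faithful square for $Y_{n+1}$:
\[
\begin{tikzcd}
Y_n \ar[d] \ar[r, "s_i"] & Y_{n+1} \ar[d] \ar[r] & Y_0^{n+2} \ar[d]  \\
X_n \ar[r, "s_i"'] & X_{n+1} \ar[r] & X_0^{n+2}.
\end{tikzcd}
\]
The right square is a pullback by the fully-faithful hypothesis, so by the prism lemma it suffices to show that the outer rectangle is a pullback.

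Next, I would observe that the composite top map $Y_n \to Y_0^{n+2}$ is the vertex map $Y_n \to Y_0^{n+1}$ followed by the diagonal insertion $\sigma_i \colon Y_0^{n+1} \to Y_0^{n+2}$ at position $i$ (since $s_i$ repeats the $i$-th vertex), and similarly downstairs. Thus the outer rectangle breaks up vertically as
\[
\begin{tikzcd}
Y_n \ar[d] \ar[r] & Y_0^{n+1} \ar[d] \ar[r, "\sigma_i"] & Y_0^{n+2} \ar[d]  \\
X_n \ar[r] & X_0^{n+1} \ar[r, "\sigma_i"'] & X_0^{n+2}.
\end{tikzcd}
\]
The left-hand square here is a pullback, again by fully faithfulness applied to $Y_n$. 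The right-hand square is a product of identities on $n+1$ factors with the single diagonal square for $Y_0 \to X_0$, which is a pullback by Lemma~\ref{lem:diag pbk} because $Y_0 \to X_0$ is mono (the second condition in the definition of full inclusion). Hence the right-hand square is a pullback, the rectangle on the bottom is a pullback, and the prism lemma gives the desired conclusion.

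The main obstacle I anticipate is purely notational rather than mathematical: carefully writing $\sigma_i$ as a factor of the vertex map and confirming that its pullback reduces to the plain diagonal square for $Y_0 \to X_0$. Once that identification is made, everything else is the prism lemma applied twice.
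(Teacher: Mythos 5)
Your proof is correct and is essentially the paper's own argument: the paper arranges the same data as a cube (naturality square in back, two fully-faithful squares on the sides, the diagonal square of Lemma~\ref{lem:diag pbk} in front) and applies the prism lemma once, which is exactly your two flattened prism applications. The only quibble is a harmless miscount in "identities on $n+1$ factors" --- the insertion $\sigma_i$ is the product of identities on $n$ factors with one diagonal $Y_0 \to Y_0\times Y_0$ --- which does not affect the argument.
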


\begin{proof}
  Let $f: Y \to X$ be a full inclusion. In the cube diagram (for $0 \leq i 
  < n$)
  \[
  \begin{tikzcd}
    & Y_n \ar[ldd]\ar[d] & Y_{n-1} 
    \ar[d]\ar[l, pos=0.7, "s_i"']
    \\
  & X_n \ar[ldd] & X_{n-1} \ar[ldd] \ar[l,pos=0.7, "s_i"'] \\
  Y_0 \cdots Y_0 \ar[d] & Y_0 \cdots Y_0 \ar[d] \ar[l,crossing over] & \\
  X_0 \cdots X_0 & X_0 \cdots X_0 \ar[l] &
\arrow[from=1-3,to=3-2,crossing over]
\end{tikzcd}
  \]
  the sides are pullbacks since $f$ is fully faithful. In the front square,
  there are $n+1$ factors on the left and $n$ factors on the right, and the
  horizontal maps are given by a diagonal in position $i$. So this square 
  is a pullback by Lemma~\ref{lem:diag pbk} since $f$ is mono on 
  objects. Therefore by the prism lemma, the back square is a 
  pullback, and since this holds for all $0 \leq i 
  < n$, this is precisely to say that $f$ is conservative.
\end{proof}

\begin{cor}\label{full=>complete}
  If $f:Y\to X$ is a full inclusion and if $X$ is complete, then also $Y$ is 
  complete.
\end{cor}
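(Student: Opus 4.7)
The plan is to unpack completeness to the single condition that $s_0\colon Y_0\to Y_1$ is mono, and then derive this from $X$ being complete by pulling back along the conservative map $f$.

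First I would invoke the previous lemma, which says that any full inclusion $f\colon Y\to X$ is conservative, i.e.\ cartesian on every degeneracy map. In particular, the naturality square
\[
\begin{tikzcd}
Y_0 \ar[d] \ar[r, "s_0"] & Y_1 \ar[d] \\
X_0 \ar[r, "s_0"'] & X_1
\end{tikzcd}
\]
is a pullback. Since $X$ is complete, the bottom map $s_0\colon X_0\to X_1$ is mono. Now I would use the standard fact that monomorphisms of spaces are stable under pullback: this is immediate from Lemma~\ref{mono if diag}, since pulling back preserves the diagonal characterisation of being $(-1)$-truncated (alternatively, fibres of the top map $s_0\colon Y_0\to Y_1$ are pullbacks of fibres of $s_0\colon X_0\to X_1$, hence $(-1)$-truncated). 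Therefore $s_0\colon Y_0\to Y_1$ is mono, which is exactly the completeness condition for $Y$.

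There is no real obstacle: the work has already been done in proving that a full inclusion is conservative, and the remaining step is just the pullback-stability of monos. One could even say the corollary is essentially a two-line consequence of the previous lemma together with the definition of completeness.
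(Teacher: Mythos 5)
Your argument is correct and is exactly the paper's: the paper's proof is the one-liner ``conservative over complete is complete,'' and you have simply unpacked that line into the pullback square for $s_0$ plus pullback-stability of monomorphisms.
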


\begin{proof}
  This follows since clearly conservative over complete is complete.
\end{proof}

\begin{prop}\label{ff=>ik}
   A full inclusion $f:Y \to X$ is relatively Segal (inner Kan), and so
  semi-ikeo.
\end{prop}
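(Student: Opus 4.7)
The plan is to establish the relatively Segal condition directly from full faithfulness, and then deduce semi-ikeo by invoking Lemma~\ref{if mono, semiikeo<=>ikeo}, which applies because a full inclusion is mono on objects by definition.

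For relatively Segal, I would fix $n \ge 2$ and factor the vertex map through the fiber product of principal edges, forming the prism
\[
\begin{tikzcd}
Y_n \ar[r] \ar[d] & Y_1 \times_{Y_0} \cdots \times_{Y_0} Y_1 \ar[r] \ar[d] & Y_0^{n+1} \ar[d] \\
X_n \ar[r] & X_1 \times_{X_0} \cdots \times_{X_0} X_1 \ar[r] & X_0^{n+1}.
\end{tikzcd}
\]
The outer rectangle is the $n$-instance of the full-faithfulness pullback, so by the prism lemma it suffices to show the right-hand square is a pullback.

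To do so, I would write $P_n^Y := Y_1 \times_{Y_0} \cdots \times_{Y_0} Y_1$ as the pullback $Y_1^n \times_{Y_0^{2n}} Y_0^{n+1}$, where $Y_0^{n+1} \to Y_0^{2n}$ is the diagonal that duplicates each of the $n-1$ interior vertices, and write $P_n^X$ analogously. The $n$-fold cartesian product of the $n=1$ full-faithfulness square gives $Y_1^n \simeq X_1^n \times_{X_0^{2n}} Y_0^{2n}$, and two pullback-cancellations then yield
\[
P_n^Y \;\simeq\; X_1^n \times_{X_0^{2n}} Y_0^{n+1} \;\simeq\; P_n^X \times_{X_0^{n+1}} Y_0^{n+1},
\]
which is exactly the pullback statement required for the right-hand square.

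The main difficulty is the bookkeeping for the several copies of $Y_0$ and $X_0$ and making sure the diagonal and duplicating maps line up correctly; once that cube-and-prism diagram is laid out, the rest is iterated use of the prism lemma, and no completeness or finiteness hypothesis enters. (In fact this shows that full faithfulness alone gives relatively Segal; the mono-on-objects part of a full inclusion is used only in the final appeal to Lemma~\ref{if mono, semiikeo<=>ikeo}.)
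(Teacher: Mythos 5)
Your proposal is correct, and its skeleton is the same as the paper's: factor the vertex map $Y_n \to Y_0^{n+1}$ through the fibre product of principal edges, observe that the outer rectangle of the resulting prism is the $n$-instance of full faithfulness, and conclude by the prism lemma once the auxiliary square comparing $Y_1\times_{Y_0}\cdots\times_{Y_0}Y_1$ with $X_1\times_{X_0}\cdots\times_{X_0}X_1$ is known to be a pullback. Where you differ is in how that auxiliary square is handled. The paper (which writes out only $n=2$) identifies it as the fibre product over $X_0$ of two copies of the $n=1$ full-faithfulness square, and to do so it must first replace $\times_{Y_0}$ by $\times_{X_0}$ via Lemma~\ref{pbk-change}, which is exactly where the mono-on-objects hypothesis enters its proof of relative Segalness. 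You instead present the iterated fibre product as $Y_1^n\times_{Y_0^{2n}}Y_0^{n+1}$ and chain pullback pastings through the $n$-fold product of the $n=1$ square; this bookkeeping is valid (I checked the two cancellations: pasting the defining square of $P_n^Y$ onto the product square gives $P_n^Y\simeq X_1^n\times_{X_0^{2n}}Y_0^{n+1}$, and factoring $Y_0^{n+1}\to X_0^{2n}$ through $X_0^{n+1}$ then exhibits $P_n^Y$ as $P_n^X\times_{X_0^{n+1}}Y_0^{n+1}$), and it genuinely avoids the mono hypothesis. So your parenthetical observation is a small but real strengthening over the paper's argument: full faithfulness alone gives relatively Segal, with monomorphy on objects needed only for the passage to semi-ikeo via Lemma~\ref{if mono, semiikeo<=>ikeo} --- which is also how the paper's ``and so'' is to be read. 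The trade-off is that the paper's route is shorter to state, while yours is hypothesis-minimal and uniform in $n$.
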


\begin{proof}
  We do the $n=2$ case. We need to show that the square
  \[
  \begin{tikzcd}
  Y_1 \times_{Y_0}  Y_1 \ar[d] & Y_2 \ar[d] \ar[l] \\
  X_1 \times_{X_0} X_1 & X_2 \ar[l]
  \end{tikzcd}
  \]
  is a pullback.
  Consider the prism diagram
  \[
  \begin{tikzcd}
  Y_0 \times Y_0 \times Y_0 \ar[d] \ar[r, phantom, "=" description] &  \ar[d] (Y_0 \times Y_0) \times_{Y_0} (Y_0 \times Y_0) & \ar[l]
  Y_1 \times_{Y_0}  Y_1 \ar[d] & Y_2 \ar[d] \ar[l] \\
  X_0 \times X_0 \times X_0 \ar[r, phantom, "=" description] & (X_0 \times X_0) \times_{X_0} (X_0 \times X_0) & \ar[l]
  X_1 \times_{X_0} X_1 & X_2  .
  \ar[l]
  \end{tikzcd}
  \]
  The middle square is a pullback since it is the fibre
  product over $X_0$ of two copies of the pullback square
  \[
  \begin{tikzcd}
  Y_0 \times Y_0 \ar[d] & Y_1 \ar[l] \ar[d] \dlpullback  \\
  X_0 \times X_0 & X_1 \ar[l]
  \end{tikzcd}
  \]
  expressing that $f$ is fully faithful. Note that this is where we use
  that $Y_0 \to X_0$ is mono, so that pullbacks over $Y_0$ can be computed
  over $X_0$ (cf.~Lemma~\ref{pbk-change}). The outer rectangle is a
  pullback since $Y \to X$ is fully faithful. The prism lemma now tells us
  that the right-hand square is a pullback.
\end{proof}

\begin{blanko}{Full hull.}
  Generally, for a subset $T$ of points of $X_0$, let $Y_0$ denote the 
  full subspace of $X_0$ spanned by $T$, to 
  get a monomorphism $Y_0 \to X_0$. Now consider all simplices of $X$ that have 
  vertices in $Y_0$. Formally define $Y_n$ to be the pullback
  \[
  \begin{tikzcd}
  Y_0 \times \cdots \times Y_0 \ar[d] & Y_{n} \ar[l] \ar[d] \dlpullback  \\
  X_0 \times \cdots \times X_0 & X_n .
  \ar[l]
  \end{tikzcd}
  \]
  The $Y_n$ assemble into a simplicial space, where the face and 
  degeneracy maps are induced from those of $X$.
\end{blanko}

\subsection{Convexity}

A simplicial map $Y \to X$ is called {\em convex} if it is a full 
inclusion which is also culf.

Recall that culf means cartesian on active maps. For decomposition spaces,
this can be measured on the single square
\[
\begin{tikzcd}
Y_1  \ar[d] & Y_2 \ar[l, "d_1"'] \ar[d] \dlpullback  \\
X_1 & X_2 .
\ar[l, "d_1"] 
\end{tikzcd}
\]

\begin{blanko}{Non-example.}
  The full inclusion of simplicial spaces $\Delta^{\{0,2\}} \subset \Delta^2$ is not
  convex, as it is not culf.
\end{blanko}

\begin{blanko}{Non-example.}
  The inclusion of simplicial spaces $\{0,1\} \subset \Delta^1$ is culf but
  not convex as it is not full.
\end{blanko}

\begin{blanko}{Convex hull.}
  Let $X$ be a decomposition space. Any collection of points (subset
  $S\subset \pi_0 X_0$) defines a unique convex hull $Y\subset X$. To form
  it, first consider
  all simplices whose zeroth and last vertex belong to
  $S$, and add all their vertices to the collection.
  This gives us $\overline
  S$. Now take the full hull of $\overline S$. This defines a
  simplicial space $Y$, and we claim it is convex in $X$.

It is thanks to the decomposition-space axiom that the
convex-hull construction stabilises
after one step: if we start with 
points $x$ and $z$, and a new point $y$ is introduced between them, then 
one could ask if there is a new simplex from $x$ to $y$ which will then 
introduce further points between $x$ and $y$. This does not happen because
these points would have been introduced already in the first step: indeed,
if there is a simplex from $x$ to $y$, and since $x$ and $y$ already form 
the short edge of a simplex in $X$, there is also a simplex obtained by
gluing these two simplices.  So anything between $x$ and $y$ will have 
been introduced already in the first step.
\end{blanko}

\begin{lemma}
  Suppose $K \subset X$ is convex. If $\sigma\in X_n$ is an $n$-simplex
  whose last vertex belongs to $K$, then there is a unique index $0\leq j 
  \leq n$ such that vertex $j$ belongs to $K$,
  every face after $j$ belongs to $K$ and no face before $j$ belongs to $K$.
\end{lemma}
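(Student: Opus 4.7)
The plan is to take $j$ to be the smallest index $0 \le j \le n$ such that the $j$-th vertex of $\sigma$ lies in $K_0$. Such a $j$ exists because vertex $n$ lies in $K_0$ by hypothesis, and its minimality immediately gives both the uniqueness of $j$ and the clause that no vertex in a position $<j$ lies in $K_0$.

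The substantive content is then to show that every vertex of $\sigma$ in position between $j$ and $n$ (inclusive) lies in $K_0$; once this is known, the ``every face after $j$'' and ``no face before $j$'' clauses follow from the fully faithful property, since a face is in $K$ if and only if all its vertices are. I would form the face $\tau := d_0^{\,j}\sigma \in X_{n-j}$ (obtained by iterating the bottom face map $j$ times), whose ordered vertices are the vertices of $\sigma$ in positions $j, j+1, \ldots, n$. By hypothesis and by the choice of $j$, the first and last vertex of $\tau$ lie in $K_0$.

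The argument now proceeds in two lifting steps, which together formalise the convex-hull stabilisation sketched informally just before the lemma. First, fully faithfulness at $n=1$ provides the pullback
\[
\begin{tikzcd}
K_0 \times K_0 \ar[d] & K_1 \dlpullback \ar[l] \ar[d] \\
X_0 \times X_0 & X_1 \ar[l]
\end{tikzcd}
\]
so the long edge of $\tau$, being its image under the active map $[1]\actto[n-j]$ and having both endpoints in $K_0$, lifts uniquely into $K_1$. Second, the culf condition applied to this same long-edge active map yields the pullback
\[
\begin{tikzcd}
K_1 \ar[d] & K_{n-j} \dlpullback \ar[l] \ar[d] \\
X_1 & X_{n-j} \ar[l]
\end{tikzcd}
\]
from which $\tau$ itself lifts to $K_{n-j}$. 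The vertex maps of $K$ then exhibit the vertices $v_j,\ldots,v_n$ as belonging to $K_0$.

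The main obstacle is that fully faithfulness alone cannot place $\tau$ in $K_{n-j}$, since the intermediate vertices are not yet known to lie in $K_0$; it is precisely the culf hypothesis that allows one to lift a simplex once its long edge has been placed in $K_1$. Beyond this two-step lifting, the remaining assertions of the lemma are bookkeeping flowing from the minimality of $j$.
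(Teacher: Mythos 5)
Your proposal is correct and follows essentially the same route as the paper: choose $j$ minimal with $x_j\in K_0$, use fullness to place the edge $x_jx_n$ in $K_1$, then use culfness to lift the whole $(n-j)$-simplex spanned by vertices $j,\dots,n$ into $K$, with minimality handling the remaining clauses. The paper's proof is just a terser statement of your two lifting steps, so there is nothing to add.
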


\begin{proof}
  Denote by $x_{0},x_{1},\dots,x_{n}$ the vertices of $\sigma$.
  Let $j$ be minimal such that $x_j$ belongs to $K$.
  Since both $x_j$ and $x_n$ belong to $K$, it follows from fullness that 
  the $1$-simplex $x_j x_n$ belongs to $K$. But $x_j x_n$ is the long 
  edge of an $(n-j)$-simplex, and this whole $(n-j)$-simplex must belong 
  to $K$ since the inclusion is culf. By minimality of $j$, no earlier
  faces can belong to $K$.
\end{proof}

\section{Crapo complementation formula}

		\newcommand{\conv}{\!*\!}

Let $X$ be a complete decomposition space, and let $K \subset X$ be a
convex subspace. In particular, the full inclusion map $f: K \to X$ is
semi-ikeo (by Proposition~\ref{ff=>ik}), and therefore $K$ is again a
complete decomposition space. Observe that the complement $X \shortsetminus
K$ is the full hull on the complement $X_0 \shortsetminus K_0$. So the
inclusion map $g: X \shortsetminus K \to X$ is also semi-ikeo, so the
complement $X \shortsetminus K$ is again a decomposition space, by
Proposition~\ref{semiikeo / decomp is decomp}, and it is complete by
Corollary~\ref{full=>complete}. With these arguments, we have everything
prepared, and the symbols in the following all make sense. (Culfness of the
inclusion $K \to X$ is not required for the statement, but it will be
crucial for the proof.)

Recall from \ref{mu} that the M\"obius function is defined as
the formal difference $\mu =\Phieven - \Phiodd$, and that its defining 
equation
$$
\mu*\zeta = \epsilon = \zeta*\mu 
$$
should be interpreted as
$$
\Phieven * \zeta = \epsilon + \Phiodd * \zeta
$$
(for the left-hand equation), which is now an explicit
homotopy equivalence (established in 
\cite{Galvez-Kock-Tonks:1512.07577}).

\subsection{Symbolic version}

In this subsection we state the Crapo formula in symbolic form, meaning 
that we employ the symbol $\mu$ for the M\"obius function.
This is shorthand for something that is not exactly a linear functional but only
a formal difference of linear functionals, and therefore it does not directly
have an objective meaning. To interpret it, we should first expand each 
$\mu$-symbol in terms 
of $\Phi$-symbols, and then move all negative terms to the other side of 
the
equation. Once this is done we have an equation that we can aspire to 
establish as an explicit homotopy equivalence. This expansion procedure
is a bit cumbersome,
but quite routine. Once we have the objective statement, involving 
various 
$\Phieven$- and $\Phiodd$-functionals, we can further break it down to an
equivalence involving only individual $\Phi_n$-functionals. These we finally
establish as explicit homotopy equivalences; the global ones are then 
obtained by summing over all $n$ in a suitable way.

We shall do all that in the next subsection, but it is enlightening first 
to see the symbolic proof.
Here is the formula, symbolic version:

\begin{theorem}\label{crapo-symbolic}
  For $K \subset X$ convex we have
  $$
  \mu^X = \mu^{X\shortsetminus K} + \mu^X *\zeta^K *\mu^X .
  $$
\end{theorem}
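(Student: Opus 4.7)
The plan is to verify the symbolic identity by the standard convolution-algebra manipulation, reducing it to an equivalent ``positive'' form that is the decomposition-space analogue of the classical Bj\"orner--Walker identity. The core of the argument relies on the uniqueness of the M\"obius function, encoded in the two-sided convolution identities $\mu^X * \zeta^X = \epsilon^X = \zeta^X * \mu^X$ (which also express that $\zeta^X$ is invertible in the convolution algebra, with inverse $\mu^X$).

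First I would convolve the proposed identity $\mu^X = \mu^{X\shortsetminus K} + \mu^X * \zeta^K * \mu^X$ on the right with $\zeta^X$. Using $\mu^X * \zeta^X = \epsilon^X$ on both occurrences, together with the fact that $\epsilon^X$ is the convolution unit, this yields the equivalent identity
\[
\epsilon^X = \mu^{X\shortsetminus K} * \zeta^X + \mu^X * \zeta^K.
\]
Convolving further on the left with $\zeta^X$ (now using $\zeta^X * \mu^X = \epsilon^X$) gives the purely positive form
\[
\zeta^X = \zeta^K + \zeta^X * \mu^{X\shortsetminus K} * \zeta^X,
\]
in which $\mu^X$ no longer appears on the right-hand side. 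Since $\zeta^X$ is invertible in the convolution algebra, these three identities are all equivalent, so the original symbolic statement follows from this last one.

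The combinatorial content of this positive form matches the classical Bj\"orner--Walker identity: an edge of $X$ either lies entirely inside $K$ (contributing the $\zeta^K$ term), or ventures outside $K$, in which case the convexity lemma at the end of Section~4 furnishes a canonical ``maximal excursion into $X \shortsetminus K$'' flanked by two pieces in $X$. The triple convolution $\zeta^X * \mu^{X\shortsetminus K} * \zeta^X$ accounts precisely for this decomposition.

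The main obstacle is to make this combinatorial decomposition rigorous at the objective level as an explicit homotopy equivalence of $\infty$-groupoids; this is the content of the next subsection. There one would expand $\mu^{X\shortsetminus K} = \Phieven^{X\shortsetminus K} - \Phiodd^{X\shortsetminus K}$, move the negative terms across the equation, and exhibit the resulting equivalence of positive spans degree by degree in the length of the excursion, then sum. At the symbolic level treated here, however, the three rearrangements above constitute the proof.
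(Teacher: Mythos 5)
Your three rearrangements are correct as formal algebra in the convolution algebra, but they do not constitute a proof: they only show that the Crapo formula is \emph{equivalent} to the identity $\zeta^X = \zeta^K + \zeta^X * \mu^{X\shortsetminus K} * \zeta^X$, and that identity is never established. The entire content of the theorem lies in proving one member of this equivalence class, and for that you offer only the sentence about a ``canonical maximal excursion into $X\shortsetminus K$ flanked by two pieces in $X$'', which is not proved and is not what the paper's Subsection 5.2 supplies. The paper's route is different: it splits $\mu^X = \mu^{X\shortsetminus K} + \mu^{\cap K}$ (a nondegenerate simplex does or does not have a vertex in $K$), and then processes the second summand through four precisely stated propositions --- the meet proposition $\mu^{\cap K} = \mu^{\notin K} * \mu^K * \mu^{\notin K}$ (which encodes the convexity analysis: a simplex meeting $K$ decomposes as a part before $K$, a part in $K$, and a part after $K$), the K-proposition $\mu^K = \mu^K * \zeta^K * \mu^K$, and the S- and T-propositions $\mu^{\notin K} * \mu^K = \mu * \Phi_0^K$ and $\mu^K * \mu^{\notin K} = \Phi_0^K * \mu$. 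Each of these is then proved degree by degree as a sign-free homotopy equivalence. If you want to pursue your positive form instead, you would have to formulate and prove your own degree-by-degree lemmas for it; the existing ones do not apply directly.

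There is also an objective-level subtlety you pass over. Convolving with $\zeta^X$ and cancelling via $\mu^X * \zeta^X = \epsilon = \zeta^X * \mu^X$ is harmless in a $\Q$-algebra, but here $\mu^X$ is only a formal difference $\Phieven - \Phiodd$, and the inversion identity itself only exists in the de-signed form $\Phieven * \zeta = \epsilon + \Phiodd * \zeta$. Using it \emph{inside} a larger expression, and then cancelling equal terms on both sides, requires checking that every step survives the expansion into $\Phi$-functionals and the sorting by sign (including infinite sums of such terms). This can probably be made to work, but it is exactly the kind of bookkeeping the paper's proof is structured to avoid: there, every symbolic substitution is backed directly by an explicit equivalence of positive spans, and no cancellation is ever needed. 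As written, your argument defers all of the actual mathematical content and mislabels a reformulation as a proof.
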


First of all, the equation takes place for linear functionals on $X$. When
we write $\zeta^K$, we mean $f\lowershriek (\zeta^K)$. Here it should be
stressed that since $f:K\to X$ is semi-ikeo, $f\lowershriek$ preserves the
convolution product $*$, but is not unital since $f$ is not an equivalence
on objects. One of the ingredients in the proof is deduced from a formula
in $K$, so this is where we need that $f\lowershriek$ preserves $*$.

Intuitively, the formula says that the nondegenerate simplices in $X$ are 
either nondegenerate simplices 
that do not meet $K$ (the summand $\mu_{X\shortsetminus K}$) or
they are nondegenerate simplices that meet $K$  --- that is the summand
$\mu^X *\zeta^K *\mu^X$, which is less obvious.

We can derive the formula from four auxiliary propositions, which we list next. Each 
of these propositions will be proved (in Subsection~\ref{sub:lemmas}) by expanding the $\mu$ symbols into $\Phi$
symbols, and then sorting by sign. The `nicknames' listed for these propositions 
serve to stress 
the correspondence between them and the lemmas of the next 
subsection:
there will be, in each case, a
lemma explaining the homotopy equivalence for a fixed $\Phi_n$.

\bigskip

First we have a proposition only about $K$ (not about $X$):

\begin{prop}[The K-proposition]\label{K-prop}
  
  $$
  \mu^K = \mu^K * \zeta^K * \mu^K  .
  $$
\end{prop}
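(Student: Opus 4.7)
The plan is to realise the one-line algebraic argument $\mu^K = \mu^K \conv \zeta^K \conv \mu^K = \epsilon^K \conv \mu^K = \mu^K$, which uses only $\mu^K \conv \zeta^K = \epsilon^K$ together with the unit law, as an explicit homotopy equivalence of $\infty$-groupoids. The starting input is the objective defining equivalence
$$\Phieven^K \conv \zeta^K \ \simeq\ \epsilon^K + \Phiodd^K \conv \zeta^K$$
already established in \cite{Galvez-Kock-Tonks:1512.07577}, together with associativity of the convolution product (standard two-square pullback pasting) and the unit identity $\epsilon^K \conv \Phi_n^K \simeq \Phi_n^K$ (immediate from the span $X_0 \to X_1$ representing $\epsilon^K$).

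First I expand the claimed identity in $\Phi$-functionals and sort by sign. Each factor of $\mu^K$ contributes either a $\Phieven^K$ or a $\Phiodd^K$, so four triple-convolutions arise on the right, and after moving all negative terms across the objective statement to establish becomes
$$\Phieven^K \conv \zeta^K \conv \Phieven^K + \Phiodd^K \conv \zeta^K \conv \Phiodd^K + \Phiodd^K \ \simeq\ \Phieven^K \conv \zeta^K \conv \Phiodd^K + \Phiodd^K \conv \zeta^K \conv \Phieven^K + \Phieven^K.$$

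Next I convolve the input equivalence on the right separately with $\Phieven^K$ and with $\Phiodd^K$. Using associativity of convolution and the unit property, this yields the two per-parity equivalences
$$\Phieven^K \conv \zeta^K \conv \Phieven^K \ \simeq\ \Phieven^K + \Phiodd^K \conv \zeta^K \conv \Phieven^K,$$
$$\Phieven^K \conv \zeta^K \conv \Phiodd^K \ \simeq\ \Phiodd^K + \Phiodd^K \conv \zeta^K \conv \Phiodd^K.$$
Substituting the first into the left-hand side of the target equation and the second (read right-to-left) into the right-hand side produces the desired homotopy equivalence.

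There is no conceptual obstacle here; the work is entirely in shuffling existing ingredients. The one step requiring attention is the parity bookkeeping: with two factors of $\mu^K$ one must simultaneously track the parities of the inner and outer $\mu^K$ so that the four triple-convolution terms land on the correct sides of the objective equation. This per-parity technique, and the trick of convolving the defining $\mu \conv \zeta \simeq \epsilon$ equivalence against $\Phi$-functionals, will presumably recur in the proofs of the remaining three auxiliary propositions announced for the Crapo formula.
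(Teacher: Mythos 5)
Your argument is correct, but it takes a genuinely different route from the paper's. You derive the identity formally from the already-established objective M\"obius inversion equivalence $\Phieven^K * \zeta^K \simeq \epsilon^K + \Phiodd^K * \zeta^K$ of \cite{Galvez-Kock-Tonks:1512.07577}, convolving it on the right with $\Phieven^K$ and with $\Phiodd^K$ and invoking associativity and the unit law; your parity bookkeeping checks out (substituting the two per-parity equivalences into the two sides of the sign-sorted target makes both sides equal to $\Phieven^K + \Phiodd^K + \Phiodd^K*\zeta^K*\Phieven^K + \Phiodd^K*\zeta^K*\Phiodd^K$). The paper instead first proves a \emph{graded} statement, the K-lemma: for each fixed $m$,
$$\Phi_m + \sum_{j=0}^{m-1}\Phi_j*\Phi_1*\Phi_{m-(j+1)} = \sum_{k=0}^{m}\Phi_k*\Phi_{m-k},$$
established by an explicit term-by-term matching using only the fundamental equivalence $\Phi_p*\Phi_q\simeq\Phi_{p+q}$, and then obtains the K-proposition by summing over even and odd $m$. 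The paper's route is more elementary in its inputs (it does not need the M\"obius inversion formula, only $\Phi_p*\Phi_q\simeq\Phi_{p+q}$), and, more to the point, the degree-$m$ K-lemma is reused later: the Key Lemma in the proof of the Crapo theorem is precisely the K-lemma convolved with $\Phi^{\notin K}_p$ on the left and $\Phi^{\notin K}_q$ on the right and summed over $p+m+q=n$. Your parity-level argument does not yield that graded refinement, so on your route the K-lemma would still have to be proved separately for the main theorem. What your approach buys is brevity and the observation that the K-proposition is a purely formal consequence of M\"obius inversion, associativity, and unitality, valid in any complete decomposition space.
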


All the following lemmas amount to analysing how a simplex of $X$ lies with respect to $K$. 
For example, the following `meet proposition' says that if a simplex of $X$ has a vertex in $K$,
then by convexity a whole middle part of the simplex must lie in $K$, and
altogether the simplex must be composed of three parts: a first part 
with edges outside (before) $K$, then a middle part wholly inside $K$, and 
finally a part with edges outside (after) $K$. The convolutions 
are the formal expression of these descriptions.

Define $\mu^{\notin K}$ to be the space of nondegenerate 
$n$-simplices of $X$ for any $n\geq 0$ (with sign $(-1)^n$)
such that no edges belong to $K$. (Note that a vertex is allowed to belong 
to $K$.)
Define $\mu^{\cap K}$ to be the space of nondegenerate 
$n$-simplices of $X$ for any $n\geq 0$ such that at least one vertex belongs to 
$K$.
\begin{prop}[The meet proposition]\label{meet-prop}
  $$
  \mu^{\cap K} = \mu^{\notin K} * \mu^K * \mu^{\notin K}  .
  $$
\end{prop}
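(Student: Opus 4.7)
The plan follows the standard procedure signalled in the paragraph above the statement: expand each $\mu$-symbol as the formal sum $\sum_n (-1)^n \Phi_n$ and reorganise the resulting alternating sum by total dimension. On the left, $\mu^{\cap K} = \sum_n (-1)^n \Phi_n^{\cap K}$, where $\Phi_n^{\cap K}$ denotes the span $X_1 \leftarrow (\nondeg X_n)^{\cap K} \to 1$ whose apex is the space of nondegenerate $n$-simplices having at least one vertex in $K$. On the right, using that $f\colon K\to X$ is semi-ikeo (Proposition~\ref{ff=>ik}) and therefore $f\lowershriek$ preserves the convolution product, one gets
$$
\mu^{\notin K} \conv \mu^K \conv \mu^{\notin K} \;=\; \sum_{n_1,n_2,n_3}(-1)^{n_1+n_2+n_3}\,\Phi^{\notin K}_{n_1}\conv f\lowershriek\Phi^K_{n_2}\conv\Phi^{\notin K}_{n_3}.
$$
Since the signs agree when $n=n_1+n_2+n_3$, it is enough to establish, for each fixed $n\geq 0$, an explicit homotopy equivalence
$$
\Phi_n^{\cap K} \;\simeq\; \sum_{n_1+n_2+n_3=n}\Phi^{\notin K}_{n_1}\conv f\lowershriek\Phi^K_{n_2}\conv\Phi^{\notin K}_{n_3},
$$
which is the underlying ``meet lemma'' referred to in the preceding paragraph.

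The geometric content is a unique decomposition of any nondegenerate $n$-simplex $\sigma$ of $X$ having at least one vertex in $K$. Let $i$ be the position of its first $K$-vertex and $j$ that of its last. By fullness of $f$, the edge $x_i x_j$ is a simplex of $K$; by culfness (the convexity argument used in the preceding lemma), the whole subface $\sigma|_{[i,j]}$ therefore lies in $K$, so in particular all intermediate vertices $x_i,\ldots,x_j$ lie in $K$. Consequently the subfaces $\sigma|_{[0,i]}$ and $\sigma|_{[j,n]}$ each contain exactly one $K$-vertex, at the far end, and fullness forbids either of them to contain an edge in $K$ (such an edge would have both endpoints in $K$). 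Setting $(n_1,n_2,n_3)=(i,j-i,n-j)$, these three pieces contribute respectively to $\Phi^{\notin K}_{n_1}$, $f\lowershriek\Phi^K_{n_2}$ (using that $f$ is conservative so that nondegeneracy is unambiguous), and $\Phi^{\notin K}_{n_3}$. Nondegeneracy of all three pieces follows from Lemma~\ref{nondeg by edges}.

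Conversely, elements of the multi-indexed summand on the right are, by iterating Lemma~\ref{Phipq}, precisely nondegenerate $n$-simplices of $X$ equipped with cuts at positions $n_1$ and $n_1+n_2$ such that the three resulting pieces have the properties $\notin K$, $\in K$, $\notin K$ respectively; any such $n$-simplex has at least one vertex in $K$ (the $n_2+1$ vertices of the middle piece). The two intermediate $2$-simplices appearing in the convolution pullback are canonically recovered from $\sigma$ via the decomposition-space axiom, and the two assignments are mutually inverse by construction. The main obstacle is to package this bijection at the level of pullback diagrams, so that the correspondence is an honest homotopy equivalence of $\infty$-groupoids rather than a bijection on points: one must identify $\Phi_n^{\cap K}$ with an explicit coproduct of pullbacks indexed by the ordered partitions $n=n_1+n_2+n_3$ and match it with the iterated convolution pullbacks on the right. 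Culfness of $K\subset X$ enters crucially here, since it is precisely what guarantees that the set of $K$-vertices of $\sigma$ forms a contiguous interval, making the decomposition unique.
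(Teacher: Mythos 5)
Your proposal is correct and follows essentially the same route as the paper: expand each $\mu$ into $\Phi$-functionals and sort by sign, reducing to the fixed-$n$ ``meet lemma'' $\Phi^{\cap K}_n = \sum_{p+m+q=n}\Phi^{\notin K}_p * \Phi^K_m * \Phi^{\notin K}_q$, which the paper likewise proves by locating the minimal and maximal $K$-vertices, using fullness to put their connecting edge in $K$ and culfness to put the whole middle face in $K$, invoking Lemma~\ref{nondeg by edges} for nondegeneracy of the three pieces, and concluding via $\Phi_p * \Phi_q = \Phi_{p+q}$. The extra care you take over the converse direction and the pullback packaging is consistent with, though slightly more explicit than, the paper's own treatment.
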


\begin{prop}[The S-proposition]\label{S-prop}
  $$
  \mu^{\notin K} * \mu^K = \mu * \Phi_0^K  .
  $$
\end{prop}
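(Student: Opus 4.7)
The plan is to follow the same template as the preceding propositions. First expand every M\"obius symbol into $\Phi$-functionals, writing $\mu = \sum_{n}(-1)^n\Phi_n$ and similarly for $\mu^K$ and $\mu^{\notin K}$, abbreviating by $\Phi_n^{\notin K}$ the space of nondegenerate $n$-simplices of $X$ none of whose edges lies in $K$, so that $\mu^{\notin K}=\sum_n(-1)^n\Phi_n^{\notin K}$. Bilinearity of the convolution then turns the statement into
$$
\sum_{p,q\geq 0}(-1)^{p+q}\,\Phi_p^{\notin K}\conv \Phi_q^K
\;\simeq\;
\sum_{n\geq 0}(-1)^n\,\Phi_n\conv \Phi_0^K .
$$
Since $(-1)^{p+q}=(-1)^n$ whenever $p+q=n$, signs match up level by level without any need to move terms across the equation, so it suffices to establish for each $n\geq 0$ an explicit homotopy equivalence of linear functionals over $X_1$,
$$
\bigsqcup_{p+q=n} \Phi_p^{\notin K}\conv \Phi_q^K
\;\isopil\;
\Phi_n\conv \Phi_0^K ,
\qquad(\star)
$$
to be phrased as a lemma at fixed $n$; summing over $n$ with signs then recovers the full statement.

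Both sides of $(\star)$ admit concrete descriptions. The right-hand side identifies with the space of nondegenerate $n$-simplices $\sigma$ of $X$ whose last vertex lies in $K_0$: by the pullback formula for convolution, $\Phi_n\conv \Phi_0^K$ lives over a $2$-simplex whose $d_0$-face is the degenerate edge at a point $k\in K_0$, and completeness of $X$ (hence $s_1$ mono) forces such a $2$-simplex to be degenerate, so the datum collapses to $\sigma\in\nondeg X_n$ together with the constraint $d_\top^n\sigma =k\in K_0$. Likewise, the composition formula $\Phi_p\conv \Phi_q\simeq \Phi_{p+q}$ of Lemma~\ref{Phipq}, restricted to the relevant subspaces, identifies $\Phi_p^{\notin K}\conv \Phi_q^K$ with the space of nondegenerate $(p+q)$-simplices whose front $p$-face has no edge in $K$ and whose back $q$-face lies entirely in $K$ (nondegeneracy of the whole simplex being automatic from Lemma~\ref{nondeg by edges}).

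With these unpackings in hand, the bijection underlying $(\star)$ is governed by the splitting index of the convexity lemma: given a nondegenerate $\sigma\in X_n$ with $x_n\in K$, let $j=j(\sigma)$ be the unique minimal index with $x_j\in K$; by convexity the back $(n-j)$-face lies in $K$, while every edge of the front $j$-face has its lower endpoint outside $K_0$ and hence by fullness does not lie in $K$. Conversely, any $(\sigma^-,\sigma^+)$ in a summand $\Phi_p^{\notin K}\conv \Phi_q^K$ glues back to such a simplex, the shared middle vertex being in $K_0$ as the initial vertex of $\sigma^+\in K$, and the splitting index of the glued simplex is precisely $p$ because the presence of any other vertex of $\sigma^-$ in $K$ would, by fullness, produce an edge of $\sigma^-$ in $K$, contradicting $\sigma^-\in\Phi_p^{\notin K}$.

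The main obstacle is to render this bijection as a genuine equivalence of $\infty$-groupoids over $X_1$ using only pullback manipulations. The intended strategy is to define, for each $0\le j\le n$, a pullback subspace $W_n^j \subset \nondeg X_n$ cut out by ``$x_0,\ldots,x_{j-1}\notin K_0$ and $x_j,\ldots,x_n\in K_0$'' (which makes sense since $K_0\into X_0$ is mono and hence $X_0 \simeq K_0 \sqcup (X_0\shortsetminus K_0)$), and then to verify two equivalences. First, $\bigsqcup_j W_n^j \simeq \{\sigma\in\nondeg X_n:x_n\in K_0\}$, which is precisely the content of the convexity lemma, to the effect that every nondegenerate simplex with last vertex in $K$ has a monotone vertex pattern; this is where culfness of $K\subset X$ is indispensable, as without it a simplex could enter and leave $K$ repeatedly. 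Second, $W_n^j\simeq \Phi_j^{\notin K}\conv \Phi_{n-j}^K$, obtained from the inert characterisation of decomposition spaces (Theorem~\ref{thm:decomp}) together with the Prism Lemma, after noting that a back $(n-j)$-face with all vertices in $K_0$ is automatically a simplex of $K$ by fullness.
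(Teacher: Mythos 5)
Your proposal is correct and follows essentially the same route as the paper: expand the $\mu$'s into $\Phi$'s, observe that the signs match degreewise, reduce to the fixed-$n$ identity $\Phi_n * \Phi_0^K = \sum_{p+q=n}\Phi_p^{\notin K}*\Phi_q^K$ (the paper's ``S-lemma''), and establish that via the convolution pullback formula together with the unique splitting index supplied by the convexity lemma. In fact you supply rather more detail than the paper, which leaves the S-lemma at the level of the same intuitive justification; the only quibble is that ruling out earlier vertices of $\sigma^-$ in $K$ needs culfness as well as fullness (fullness alone gives a long edge in $K$, and culfness then forces a \emph{principal} edge of $\sigma^-$ into $K$), but since you invoke the convexity lemma, which packages both, this does not affect correctness.
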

\begin{prop}[The T-proposition]\label{T-prop}
  $$
  \mu^K * \mu^{\notin K} = \Phi_0^K * \mu  .
  $$
\end{prop}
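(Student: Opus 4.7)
The plan is to follow the same pattern as the S-proposition: expand the Möbius functions into sums of $\Phi$-functionals, sort by sign, and reduce to one homotopy equivalence per total degree. Writing $\mu^K=\sum_p(-1)^p\Phi^K_p$ and $\mu^{\notin K}=\sum_q(-1)^q\Phi^{\notin K}_q$, and bundling terms by $n=p+q$, it suffices to establish, for each $n\geq 0$, a homotopy equivalence of spans over $X_1$
$$
\sum_{p+q=n} \Phi^K_p * \Phi^{\notin K}_q \;\simeq\; \Phi^K_0 * \Phi_n,
$$
since on both sides the sign attached to the $n$-part is $(-1)^n$.

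A routine calculation with the convolution pullback and the inert form of the decomposition-space axiom (Theorem~\ref{thm:decomp}) identifies the right-hand side with the space of nondegenerate $n$-simplices $\sigma$ of $X$ whose first vertex $x_0$ lies in $K_0$, and identifies each summand $\Phi^K_p*\Phi^{\notin K}_q$ on the left with the space of nondegenerate $(p+q)$-simplices of $X$ whose initial $p$-face lies in $K$ and whose terminal $q$-face has no edge in $K$. The key combinatorial input is a statement dual to the lemma about last vertices stated just before this section: for $\sigma\in \nondeg X_n$ with $x_0\in K_0$, the indices $i$ with $x_i\in K_0$ form an initial interval $\{0,1,\ldots,p\}$. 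The argument is the mirror of the one given there: if $x_0, x_j\in K_0$ with $0<j\leq n$, fullness places the edge $x_0 x_j$ in $K_1$, and culfness of $K\subset X$ then lifts the entire $j$-subsimplex on $x_0,\ldots,x_j$ into $K$, so every intermediate $x_i$ lies in $K_0$. Taking $p$ maximal with $x_p\in K_0$ produces the unique decomposition $\sigma=\sigma_1\vee\sigma_2$ with $\sigma_1\in \nondeg K_p$ and $\sigma_2\in \nondeg X_q$ having no edge in $K_1$ (by fullness again, since $x_{p+1},\dots,x_n\notin K_0$). Nondegeneracy of $\sigma$ is equivalent to joint nondegeneracy of $\sigma_1$ and $\sigma_2$ by Lemma~\ref{nondeg by edges}, since the principal edges of $\sigma$ partition into those of the two factors.

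The remaining step, and the main bookkeeping obstacle, is to upgrade this pointwise bijection to an honest homotopy equivalence of $\infty$-groupoids over $X_1$. I would handle this exactly as in the S-proposition: assemble both sides as iterated pullbacks using the general reduced-cover squares of Theorem~\ref{thm:decomp}(4) together with the defining pullback squares for fullness and for culfness of $K\subset X$, and invoke the prism lemma repeatedly to paste them into a single equivalence commuting with the long-edge maps. Summing over the decomposition index $p$, and then over $n$, returns the symbolic identity, completing the proof.
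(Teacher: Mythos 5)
Your proof is correct and follows essentially the same route as the paper: expanding by sign reduces the T-proposition to the per-degree identity $\Phi_0^K * \Phi_t = \sum_{j+q=t}\Phi^K_j * \Phi^{\notin K}_q$ (the paper's T-lemma), which is then established by the same convexity argument — fullness plus culfness force the vertices lying in $K$ to form an initial interval when the zeroth vertex is in $K$, giving the split into an initial part in $K$ and a terminal part with no principal edge in $K$, with nondegeneracy controlled by Lemma~\ref{nondeg by edges}. The paper is merely terser, dualising the discussion given for the S-lemma; the content is identical.
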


Note here that $\Phi_0^K$ is the convolution unit for the decomposition 
space $K$, but since $f$ is does not 
preserve the unit ($f$ is not an equivalence on objects, semi-ikeo, not ikeo)
the pushforwarded linear functional $f\lowershriek (\Phi_0^K)$ is not the
convolution unit in $X$. It is the linear functional
$$
X_1 \leftarrow K_0 \to 1  .
$$
Convolving with it from the right (resp.~from the left) has the effect of
imposing the condition that the last (resp.~the zeroth) vertex is in $K$.

\begin{proof}[Proof of Theorem~\ref{crapo-symbolic} using
  Propositions~\ref{K-prop}--\ref{T-prop}.]
  We clearly have
  $$
  \mu = \mu^{X \shortsetminus K} + \mu^{\cap K}:
  $$
  in terms of nondegenerate simplices, either it does or it doesn't have a
  vertex in $K$. Now apply Proposition~\ref{meet-prop} (the meet prop) to
  get
  $$
  = \mu^{X \shortsetminus K} + \mu^{\notin K} * \mu^K * \mu^{\notin K}.
  $$
  Now apply Proposition~\ref{K-prop} (the K-proposition) to the middle factor
  $\mu^K$ to get
  $$
  = \mu^{X \shortsetminus K} + \mu^{\notin K} * \mu^K * \zeta^K * \mu^K * 
  \mu^{\notin K} .
  $$  
  Now apply Proposition~\ref{S-prop} and Proposition~\ref{T-prop} to get
  $$
  \mu^{X \shortsetminus K} + \mu * \zeta^K * \mu
  $$
  (where we suppressed two instances of $\Phi_0^K$, since they are next to
  $\zeta^K$ anyway, and within $K$, the linear functional $\Phi_0^K$ is the
  neutral element for convolution).
\end{proof}

\subsection{Explicit homotopy equivalences}
\label{sub:lemmas}

Here are the individual pieces.

\begin{lemma}[The K-lemma]\label{K-lemma}
  For any complete decomposition space $K$, and for each $m\geq 0$, we have
  $$
  \Phi_m + \sum_{j=0}^{m-1} \Phi_j * \Phi_1 * \Phi_{m-(j+1)} \ = \
  \sum_{k=0}^m \Phi_k * \Phi_{m-k}  .
  $$
\end{lemma}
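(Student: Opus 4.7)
The plan is to show that both sides are equivalent to a coproduct of $m+1$ copies of $\Phi_m$, indexed respectively by ``an $m$-simplex together with a choice of vertex.'' The key input is the convolution formula $\Phi_p * \Phi_q \simeq \Phi_{p+q}$ from Lemma~\ref{Phipq}, which is moreover an equivalence of spans over $X_1$ (that is, compatible with the long-edge leg), so all the matchings below automatically respect the linear-functional structure.

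First I would treat the right-hand side: applying Lemma~\ref{Phipq} termwise gives $\Phi_k * \Phi_{m-k} \simeq \Phi_m$ for each $k \in \{0,1,\ldots,m\}$, so the right-hand side is equivalent over $X_1$ to $\bigsqcup_{k=0}^{m} \Phi_m$, i.e.\ $m+1$ copies of $\Phi_m$. For the left-hand side, iterating Lemma~\ref{Phipq} twice yields
$$
\Phi_j * \Phi_1 * \Phi_{m-j-1} \ \simeq\  \Phi_{j+1} * \Phi_{m-j-1} \ \simeq\  \Phi_m
$$
for each $j \in \{0,1,\ldots,m-1\}$; together with the solitary $\Phi_m$ summand, the left-hand side is also a coproduct of $m+1$ copies of $\Phi_m$, now indexed by $\{*\} \sqcup \{0,1,\ldots,m-1\}$, where $*$ labels the unmarked summand.

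The final step is to match the two indexing sets by the bijection $* \mapsto 0$ and $j \mapsto j+1$, and to compose the chain of span equivalences provided by Lemma~\ref{Phipq} through this matching. The combinatorial picture is transparent: the unmarked $\Phi_m$ on the left records an $m$-simplex whose ``split point'' is the initial vertex, while the summand with a middle $\Phi_1$ at position $j+1$ records an $m$-simplex split at the interior vertex $j+1$; on the right, $\Phi_k * \Phi_{m-k}$ records the split at vertex~$k$. These are the $m+1$ ways of distinguishing a vertex in a nondegenerate $m$-simplex.

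I do not expect a genuine obstacle here: the lemma is essentially a counting identity made objective by Lemma~\ref{Phipq}. The only point requiring care is that each equivalence be over $X_1$ and match the long-edge maps, but this is automatic because gluing an $m$-simplex out of shorter pieces at an intermediate vertex preserves the overall long edge, which is precisely what Lemma~\ref{Phipq} asserts.
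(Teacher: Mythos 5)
Your proposal is correct and matches the paper's proof: both arguments match the $m+1$ summands on each side via the same index bijection ($\Phi_m$ to the $k=0$ term, and the $j$th term to $k=j+1$), with Lemma~\ref{Phipq} supplying the equivalence $\Phi_j*\Phi_1\simeq\Phi_{j+1}$ over $X_1$. The only cosmetic difference is that you collapse every term all the way down to $\Phi_m$, whereas the paper stops at the pairwise identifications.
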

\begin{proof}
  There are $m+1$ terms on each side, and they match up precisely, 
  once we identify $\Phi_j * \Phi_1 = \Phi_{j+1}$. In detail, the separate 
  term 
  $\Phi_m$ on the LHS is the $k=0$ term on the RHS, $\Phi_0 * \Phi_m$;
  the remaining terms on the LHS correspond to the terms on the RHS 
  by sending the $j$th 
  term to the term indexed by $k:=j+1$: indeed
  $\Phi_j * \Phi_1 * \Phi_{m-(j+1)} \simeq \Phi_{j+1} * \Phi_{m-(j+1)}
  = \Phi_k * \Phi_{m-k}$ by Lemma~\ref{Phipq}.
\end{proof}

\begin{blanko}{Variant.}\label{variant}
  There is another equivalence, where $\Phi_m$ on the LHS is matched with 
  the last summand on the RHS instead of the zeroth.
\end{blanko}

\begin{cor}\label{eqaddedupevenandodd in thm}
  For any complete decomposition space $K$, we have
  $$
  \Phieven + \Phieven\conv\Phi_1\conv\Phiodd + \Phiodd\conv\Phi_1\conv\Phieven = 
  \Phieven\conv\Phi_0\conv\Phieven + \Phiodd\conv\Phi_0\conv\Phiodd  ,
  $$
  and
  $$
  \Phiodd + \Phieven\conv\Phi_1\conv\Phieven + \Phiodd\conv\Phi_1\conv\Phiodd = 
  \Phieven\conv\Phi_0\conv\Phiodd + \Phiodd\conv\Phi_0\conv\Phieven  .
  $$
  
\end{cor}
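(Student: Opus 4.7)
The plan is to obtain each of the two equations by summing the K-lemma (Lemma~\ref{K-lemma}) respectively over all even $m \geq 0$ and over all odd $m \geq 0$, then regrouping terms by parity. For each fixed $m$, Lemma~\ref{K-lemma} supplies an explicit homotopy equivalence
$$
\Phi_m + \sum_{j=0}^{m-1} \Phi_j \conv \Phi_1 \conv \Phi_{m-j-1} \ \simeq \ \sum_{k=0}^m \Phi_k \conv \Phi_{m-k},
$$
and since coproducts of $\infty$-groupoids preserve equivalences, summing these equivalences over any index set again yields a homotopy equivalence. Throughout I would use Lemma~\ref{Phipq} in the form $\Phi_p \conv \Phi_0 \simeq \Phi_{p+0} = \Phi_p$ to freely insert or suppress a middle $\Phi_0$ factor, which is what converts the natural output of the summation into the form stated in the corollary.

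For the first equation, sum the above equivalence over all even $m$. The isolated term $\Phi_m$ on the LHS contributes $\Phieven$. In the LHS double sum the pair $(j,r)$ with $r := m - j - 1$ satisfies $j + r = m - 1$ odd, so $j$ and $r$ have opposite parities; regrouping by the parity of $j$ gives $\Phieven \conv \Phi_1 \conv \Phiodd + \Phiodd \conv \Phi_1 \conv \Phieven$. On the RHS the pair $(k, m-k)$ satisfies $k + (m-k) = m$ even, so the two factors share parity, yielding $\Phieven \conv \Phieven + \Phiodd \conv \Phiodd$. Inserting a middle $\Phi_0$ in each product puts it in the stated form.

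The second equation is derived identically by summing over odd $m$, with the two parity cases swapped: the LHS double sum contributes $\Phieven \conv \Phi_1 \conv \Phieven + \Phiodd \conv \Phi_1 \conv \Phiodd$, and the RHS contributes $\Phieven \conv \Phiodd + \Phiodd \conv \Phieven$, again with the cosmetic $\Phi_0$ inserted. The only real work is careful parity bookkeeping in the indexing, so I do not anticipate any genuine obstacle; nothing beyond Lemmas~\ref{K-lemma} and~\ref{Phipq} is needed.
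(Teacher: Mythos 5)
Your proof is correct and matches the paper's: the paper's proof is exactly to sum the K-lemma over all even $m$ and over all odd $m$, with the same parity regrouping and the same use of Lemma~\ref{Phipq} to insert the middle $\Phi_0$. You have simply written out the bookkeeping that the paper leaves implicit.
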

\begin{proof}
  This is just to add up instances of Lemma~\ref{K-lemma} for all $m$ even and for all $m$ odd.
\end{proof}

In Proposition~\ref{K-prop} we stated the following:
  $$
  \mu^K = \mu^K * \zeta^K * \mu^K  .
  $$
  This is shorthand for an explicit homotopy equivalence of 
  $\infty$-groupoids. To expand, use first $\mu= \Phieven-\Phiodd$:
  $$
  \Phieven - \Phiodd = (\Phieven - \Phiodd) * (\Phi_0+\Phi_1) * (\Phieven 
  - \Phiodd) ,
  $$
  and then expand and move all minus signs to the other side of the equation to
  obtain finally the sign-free meaning of the proposition:
  $$
  \begin{array}{rr}
  \Phieven\! &+ \ \Phieven \conv \Phi_1 \conv \Phiodd + 
  \Phiodd\conv\Phi_1\conv\Phieven \\[8pt]
  &+ \
  \Phieven\conv\Phi_0\conv\Phiodd + \Phiodd\conv\Phi_0\conv\Phieven
  \end{array}
  \ \
  =
  \ \
  \begin{array}{rr}
  &+ \ \Phieven\conv\Phi_0\conv\Phieven + \Phiodd\conv\Phi_0\conv\Phiodd
   \\[8pt]
  \Phiodd\! &+ \
  \Phieven\conv\Phi_1\conv\Phieven + \Phiodd\conv\Phi_1\conv\Phiodd  .
  \end{array}
  $$
  This is the explicit homotopy equivalence we establish. The equation
  has been arranged so that the first line of the equation is the 
  even equation in Corollary~\ref{eqaddedupevenandodd in thm} and the second line is the
  odd equation in Corollary~\ref{eqaddedupevenandodd in thm}.
  This is the objective proof of Proposition~\ref{K-prop}.

\bigskip

Let now $f: K \to X$ be a convex inclusion. All linear functionals pertaining to
$K$ are decorated with a superscript $K$ (such as in $\zeta^K$, $\mu^K$, 
$\Phi_n^K$), but we use those symbols also for their pushforth along 
$f$, so that the symbols occurring really stand for $f\lowershriek(\zeta^K)$, 
$f\lowershriek(\mu^K)$, $f\lowershriek(\Phi_n^K)$, and so on.
By multiplicativity of $f\lowershriek$ (the fact that $f$ is semi-ikeo), 
the equation for $K$ of 
Lemma~\ref{K-lemma} holds also in $X$. We use the same convention for 
the full inclusion $g: X\shortsetminus K \to X$. 

Finally we shall use two more decorations, ${\notin}K$ and ${\cap}K$, such
as in $\Phi_n^{\cap K}$ and $\Phi_n^{\notin K}$. These linear functionals
on $X$ are not a pushforth, and the symbols will be defined formally along
the way.

\bigskip

Define $\nondeg X^{\notin K}_r$ to be the space of nondegenerate $r$-simplices 
of $X$
such that no edges belong to $K$. (Note that a vertex is allowed to belong 
to $K$.)
Formally, this is defined as a pullback:
\[
\begin{tikzcd}
\nondeg X_r^{\notin K} \drpullback \ar[d] \ar[r] & (\nondeg X_1\shortsetminus 
\nondeg K_1) \times \cdots \times (\nondeg X_1\shortsetminus 
\nondeg K_1) \ar[d]  \\
\nondeg X_r \ar[r] & \nondeg X_1 \times \cdots \times \nondeg X_1  .
\end{tikzcd}
\]
(On the right-hand side there are $r$ factors.)
Now  $\Phi^{\notin K}_r$ is defined to be the linear functional given
by the span
$$
X_1 \leftarrow \nondeg X_r^{\notin K} \to 1  .
$$
(Note that the $r=0$ case is $\Phi_0^{\notin K} = \Phi_0$.)

\bigskip

Denote by $\Phi^{\cap K}_n$ the space of nondegenerate $n$-simplices of $X$
for which there exists a vertex in $K$.

\begin{lemma}[The meet lemma]\label{meet-lemma}
  $$
  \Phi^{\cap K}_n \ = \ \sum_{p+m+q=n} \Phi^{\notin K}_p * \Phi^K_m * 
  \Phi^{\notin K}_q  .
  $$
\end{lemma}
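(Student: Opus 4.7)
The plan is to realise both sides of the stated equation as explicit subspaces of $\nondeg X_n$ (viewed as spans $X_1 \leftarrow (\cdot) \to 1$ via the long-edge map) and exhibit their equivalence.

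First I would unfold the right-hand side. By Lemma~\ref{Phipq} together with the inert-cover form of the decomposition-space axiom (Theorem~\ref{thm:decomp}, Condition (4)), iterating two convolutions shows that $\Phi^{\notin K}_p * \Phi^K_m * \Phi^{\notin K}_q$ is represented by the subspace of $\nondeg X_n$ (with $n = p+m+q$) consisting of $n$-simplices whose first $p$ and last $q$ principal edges lie in $\nondeg X_1 \shortsetminus \nondeg K_1$ and whose middle $m$ principal edges lie in $\nondeg K_1$. Identifying $\Phi^K_m$ inside $\Phi_m$ of $X$ uses fullness of $K \subset X$ together with Lemma~\ref{nondeg by edges}: a nondegenerate $m$-simplex of $X$ lies in $K$ iff its principal edges do. The right-hand-side summands are then manifestly disjoint, since the triple $(p, m, q)$ is recovered from the edge partition.

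Next I would construct the inverse map by establishing the key combinatorial fact: if $\sigma \in \nondeg X_n$ has at least one vertex in $K$, then its $K$-vertices form a nonempty contiguous block $\{v_p, v_{p+1}, \dots, v_{p+m}\}$. For any two $K$-vertices $v_i, v_k$ of $\sigma$ with $i<k$, the edge $(v_i, v_k)$ lies in $K$ by fullness, and being the long edge of the $(k{-}i)$-sub-simplex $\sigma'$ of $\sigma$, culfness of $K \subset X$ forces $\sigma'$ entirely into $K$, so every intermediate vertex $v_j$ lies in $K$. This is a two-sided version of the convexity lemma already in the excerpt. Once the block is identified, the middle $m$ principal edges lie in $K$ by fullness, while the $p$ edges before and the $q$ edges after cannot, as each has an endpoint outside $K_0$. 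The inert-cover pullback of Theorem~\ref{thm:decomp} then factorises $\sigma$ canonically into the required triple, yielding the inverse.

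The main obstacle is not conceptual but bookkeeping: the two constructions must be shown mutually inverse as maps of spans over $X_1$, for which Theorem~\ref{thm:decomp} together with a handful of prism-lemma manipulations suffice. The conceptual substance lies entirely in the contiguous-block argument above, which is precisely where both ingredients of convexity (fullness and culfness) are used.
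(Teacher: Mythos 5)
Your proposal is correct and follows essentially the same route as the paper: identify the contiguous block of $K$-vertices via the minimal and maximal such vertex (fullness puts the edge between them in $K$, culfness then pulls in the whole intermediate sub-simplex), observe that the edges before and after the block cannot lie in $K$, and conclude via Lemma~\ref{nondeg by edges} and the convolution equivalence $\Phi_p * \Phi_q = \Phi_{p+q}$ of Lemma~\ref{Phipq}. The paper's proof is just a more compressed version of the same argument, so no further comparison is needed.
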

\begin{proof}
  If an $n$-simplex $\sigma\in X_n$ has some vertex in $K$, then there is 
  a minimal vertex $x_p$ in $K$ and a maximal vertex $x_{p+m}$ in $K$. 
  (They might coincide, which would be the case $m=0$.) Since $K \to X$ is 
  full, the edge from $x_p$ to $x_{p+m}$ is contained in $K$, and since 
  $K\to X$ is also culf,  all intermediate vertices and faces belong to 
  $K$ too. So the simplex $\sigma$ necessarily has first $p$ edges not 
  belonging to $K$, then $m$ edges that belong to $K$, and finally $q$
  edges not belonging to $K$. So far we have referred to arbitrary 
  simplices, but we know from Lemma~\ref{nondeg by edges} that $\sigma$ 
  is nondegenerate if and only if its three parts are. Now we get the 
  formula at the level of the $\Phi$-functionals from the fundamental
  equivalence $\Phi_{p+q} = \Phi_p * \Phi_q$ (Lemma~\ref{Phipq}).
\end{proof}

\begin{lemma}[The S-lemma]\label{S-lemma}
  $$
  \Phi_s * \Phi_0^K = \sum_{p+i=s} \Phi^{\notin K}_p * \Phi^K_i  .
  $$
\end{lemma}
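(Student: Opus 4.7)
The plan is to exhibit both sides as explicit spans over $X_1$ and construct a direct homotopy equivalence between the total spaces, reading the decomposition off from the structural lemma about how a simplex meeting a convex subspace looks.

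First I would unpack the left-hand side. By the inert characterisation of decomposition spaces (Theorem~\ref{thm:decomp}), together with the discussion preceding Lemma~\ref{Phipq} explaining how convolutions of $\Phi$-type functionals are computed, $\Phi_s * \Phi_0^K$ is represented by the span
$$
X_1 \longleftarrow \nondeg X_s \times_{X_0} K_0 \longrightarrow 1 ,
$$
where the fibre product glues the last vertex of the nondegenerate $s$-simplex to an object of $K$. In short, it parameterises nondegenerate $s$-simplices of $X$ whose last vertex belongs to $K$. By the same reasoning, each summand on the right is the span
$$
X_1 \longleftarrow \nondeg X^{\notin K}_p \times_{X_0} \nondeg K_i \longrightarrow 1 ,
$$
gluing the last vertex of the outside-$K$ piece to the first vertex of the in-$K$ piece. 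Since $K_0 \to X_0$ is mono, Lemma~\ref{pbk-change} lets me replace the fibre product over $X_0$ by one over $K_0$, so the shared vertex automatically lies in $K_0$.

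Next I would construct the equivalence at the objective level. Given a nondegenerate $s$-simplex $\sigma$ whose last vertex is in $K$, the lemma closing Section~4 provides a unique first vertex $x_p$ of $\sigma$ lying in $K$, such that all later faces belong to $K$ and no earlier face does. Splitting $\sigma$ at $x_p$ produces an initial $p$-simplex and a terminal $i$-simplex ($i = s-p$) sharing the vertex $x_p \in K_0$. None of the first $p$ edges belongs to $K$ (by the uniqueness of $p$ and fullness: an edge of $\sigma$ lying in $K$ would force both its endpoints into $K$), so the initial piece lies in $\nondeg X^{\notin K}_p$, while the terminal piece lies in $\nondeg K_i$; each is nondegenerate by Lemma~\ref{nondeg by edges} applied to $\sigma$. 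Conversely, any such pair can be joined along the shared vertex to yield an $s$-simplex whose last vertex is in $K$, and Lemma~\ref{nondeg by edges} again certifies nondegeneracy of the joined simplex from the nondegeneracy of its principal edges. Summing over $p+i=s$ yields the required homotopy equivalence.

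The main obstacle is bookkeeping at the level of pullbacks: justifying cleanly that $\Phi_s * \Phi_0^K$ really is "nondegenerate $s$-simplices with last vertex in $K$'' requires combining the inert-pullback form of the decomposition-space axiom with the nondegeneracy condition, and the same kind of pullback-juggling (invoking the prism lemma and Lemma~\ref{pbk-change}) has to be done for each right-hand summand. Once these identifications are in place, the stratification of the LHS by the index $p$ of the first vertex in $K$ is dictated directly by the convexity lemma, and the equivalence follows.
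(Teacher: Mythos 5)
Your overall strategy is the intended one (the paper itself records only the intuitive gloss following the statement of the S-lemma, so the real content is exactly the identification of the two sides as spans and the stratification by the first vertex lying in $K$), and your computation of the left-hand side as $X_1\leftarrow \nondeg X_s\times_{X_0}K_0\to 1$ is correct: since one of the two factors is a $0$-simplex, the relevant instance of Theorem~\ref{thm:decomp} really does reduce the convolution to the condition that the last vertex of a nondegenerate $s$-simplex lies in $K_0$. The gap is in your model of the right-hand summands. In a general decomposition space the convolution $\Phi^{\notin K}_p*\Phi^K_i$ is \emph{not} represented by $\nondeg X^{\notin K}_p\times_{X_0}\nondeg K_i$: the identity $X_p\times_{X_0}X_i\simeq X_{p+i}$ is a Segal-space identity which the paper explicitly warns is unavailable here (see the discussion preceding Lemma~\ref{Phipq}). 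Consequently your converse step --- ``any such pair can be joined along the shared vertex to yield an $s$-simplex'' --- is precisely the move that fails: a pair of simplices sharing only a vertex need not underlie any $s$-simplex, and when it does, not uniquely.

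The repair is to use the correct span for the convolution. By Theorem~\ref{thm:decomp} one has $X_s\simeq X_2\times_{X_1\times X_1}(X_p\times X_i)$, so $\Phi^{\notin K}_p*\Phi^K_i$ is the subspace of $X_s$ consisting of those simplices whose initial $p$-part lies in $\nondeg X^{\notin K}_p$ and whose terminal $i$-part lies in $\nondeg K_i$; the $2$-simplex mould needed for ``composition'' is already supplied by $\sigma$ itself, so no gluing is ever required. With this description both sides of the S-lemma are subspaces of $X_s$ over $X_1$ via the long edge, and your forward argument --- split $\sigma$ at the minimal vertex $x_p\in K$ supplied by the convexity lemma, use fullness to see that no earlier edge lies in $K$ and convexity to see that the terminal part lies wholly in $K$, and invoke Lemma~\ref{nondeg by edges} to transfer nondegeneracy between $\sigma$ and its two parts --- already exhibits the stratification and hence the desired equivalence. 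Everything else in your write-up stands.
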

Note again that convolution from the right with $\Phi_0^K$ serves to 
impose the condition that the last vertex belongs to $K$. So intuitively 
the equation 
says that an $s$-simplex whose last vertex is in $K$ must 
have $p$ edges outside $K$ and then $i$ edges inside $K$. (It is because 
of convexity that there are no other possibilities.) Note also that specifying an 
$s$-simplex by imposing conditions on  specific edges like this is 
precisely what the convolution product expresses.

\begin{lemma}[The T-lemma]\label{T-lemma}
  $$
  \Phi_0^K * \Phi_t = \sum_{j+q=t} \Phi^K_j * \Phi^{\notin K}_q   .
  $$
\end{lemma}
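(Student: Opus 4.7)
The T-lemma is the formal mirror of the S-lemma (Lemma~\ref{S-lemma}), obtained by reversing the direction of all simplices, so the plan is to run the same style of argument but on the opposite side. The first step is to prove the mirror of the convexity lemma stated just before Section~5: if $\sigma\in X_n$ is an $n$-simplex whose zeroth vertex belongs to $K$, then there is a unique index $0\leq j\leq n$ such that the vertices $0,1,\dots,j$ all belong to $K$, the initial $j$-face belongs to $K$, and no face spanning vertices $\geq j$ other than a single vertex belongs to $K$. The proof is verbatim the same as before: fullness of $K\subset X$ gives that the edge from $x_0$ to $x_j$ lies in $K$ for $j$ the maximal such index, culfness then promotes this to the whole initial $j$-face, and maximality of $j$ excludes any later face from $K$.

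The second step is to identify the two sides explicitly as spans $X_1\leftarrow(-)\to 1$. By Lemma~\ref{Phipq} (in the form that $\Phi_0$ is the convolution unit, localised by pulling back along $s_0\colon K_0\to X_1$), the convolution $\Phi_0^K*\Phi_t$ is carried by the homotopy pullback
\[
\begin{tikzcd}
\Phi_0^K*\Phi_t \drpullback \ar[r]\ar[d] & \nondeg X_t \ar[d, "d_\top^t"]\\
K_0 \ar[r, into] & X_0,
\end{tikzcd}
\]
i.e.\ the space of nondegenerate $t$-simplices whose zeroth vertex lies in $K$. On the other hand, applying the inert version of the decomposition space axiom (Theorem~\ref{thm:decomp}(3) with $\alpha=d^\top\vee d^\bot\colon [2]\actto[j+q]$), together with Lemma~\ref{nondeg by edges}, the summand $\Phi_j^K*\Phi_q^{\notin K}$ on the right-hand side is carried by the space of nondegenerate $(j+q)$-simplices of $X$ whose first $j$ principal edges belong to $K$ and whose last $q$ principal edges do not belong to $K$.

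The third step is to assemble the explicit homotopy equivalence. Sending a nondegenerate $t$-simplex $\sigma$ with zeroth vertex in $K$ to the index $j$ provided by the mirror convexity lemma and then to the pair $(d_\bot^{q}\sigma,\, d_\top^{j}\sigma)$ of its initial $j$-face and terminal $q$-face gives a map from $\Phi_0^K*\Phi_t$ to $\coprod_{j+q=t}\Phi_j^K*\Phi_q^{\notin K}$. The inverse glues a $j$-simplex in $K$ and a $q$-simplex with no $K$-edges along their common vertex using the special reduced-cover square for $[2]\actto[j+q]$ (again Theorem~\ref{thm:decomp}(3)). The decomposition-space axiom ensures these two constructions are mutually inverse homotopy equivalences, and nondegeneracy is preserved in both directions by Lemma~\ref{nondeg by edges}.

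The main obstacle is of the same flavour as in the S-lemma: one has to make sure that the index $j$ obtained from the mirror convexity lemma really does produce a decomposition with all subsequent edges outside $K$ (not merely the last one). This is exactly the place where culfness of the inclusion $K\into X$ is used: without it, an $n$-simplex could have a later face re-entering $K$ while an earlier edge at the boundary is outside. Once the mirror convexity lemma is in place, the pullback manipulations of the remaining two steps are essentially forced by Theorem~\ref{thm:decomp} and Lemma~\ref{Phipq}.
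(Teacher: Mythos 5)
Your proposal is correct and follows essentially the same route as the paper, which disposes of the T-lemma as the mirror image of the S-lemma: impose "zeroth vertex in $K$" by left convolution with $\Phi_0^K$, split the simplex at the maximal $K$-vertex using the (mirrored) convexity lemma, and conclude by the fundamental formula $\Phi_j * \Phi_q = \Phi_{j+q}$ — you merely spell out the details the paper leaves implicit. One cosmetic slip: the initial $j$-face and terminal $q$-face of a $t$-simplex $\sigma$ are $d_\top^{q}\sigma$ and $d_\bot^{j}\sigma$ respectively, so your pair should read $(d_\top^{q}\sigma,\,d_\bot^{j}\sigma)$.
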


This is the same, but for $t$-simplices whose zeroth vertex is in $K$.

\subsection{Crapo formula as a homotopy equivalence}

\begin{theorem}\label{crapo}
  For $K \subset X$ convex we have
$$
\mu^X = \mu^{X\shortsetminus K} + \mu^X *\zeta^K *\mu^X  .
$$

\end{theorem}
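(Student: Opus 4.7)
The plan is to lift the symbolic proof of Theorem~\ref{crapo-symbolic} to an explicit homotopy equivalence of $\infty$-groupoids, using the lemma-level equivalences of Subsection~\ref{sub:lemmas} in place of the four symbolic propositions. First I would expand every $\mu$-symbol in the claimed identity as the formal difference $\Phieven-\Phiodd$, and move all negative contributions to the opposite side so that both sides become sign-free sums of $\Phi$-functionals indexed by configurations of nondegenerate simplices. This yields a concrete sign-free equation of linear functionals on $X$, which is the objective statement to establish.

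Next I would derive the objective version of each of the four supporting propositions from its corresponding lemma. The K-proposition follows from Lemma~\ref{K-lemma} by summing over all $m$ and separating parities, precisely as spelled out after Corollary~\ref{eqaddedupevenandodd in thm}; the meet proposition follows from Lemma~\ref{meet-lemma} by summing over all $n$ and separating parities of $p+m+q$; and the S- and T-propositions follow analogously from Lemmas~\ref{S-lemma} and~\ref{T-lemma}. Since $f:K\to X$ and $g:X\shortsetminus K\to X$ are semi-ikeo (by Proposition~\ref{ff=>ik}), $f\lowershriek$ and $g\lowershriek$ preserve the convolution product, so equivalences established inside $K$ or inside $X\shortsetminus K$ push forward to genuine equivalences of linear functionals on $X$.

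Then I would reproduce the chain of manipulations from the proof of Theorem~\ref{crapo-symbolic} at the objective level. The starting decomposition $\mu^X = \mu^{X\shortsetminus K} + \mu^{\cap K}$ is tautologically a disjoint-union decomposition of spaces of nondegenerate simplices: a nondegenerate simplex of $X$ either has no vertex in $K$, or it has at least one. Applying the meet proposition to $\mu^{\cap K}$, then the K-proposition to the middle $\mu^K$ factor, and finally the S- and T-propositions, one obtains a sequence of explicit homotopy equivalences. At each step, the relevant substitution is performed inside a larger convolution, and the pullback characterisation of convolution (combined with preservation of $*$ by $f\lowershriek$ and $g\lowershriek$) ensures that pasting a lemma-level equivalence into an outer convolution again gives a pullback, hence a homotopy equivalence.

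The main obstacle is bookkeeping parities across the long chain of substitutions: each $\mu$ expands into two terms, so a triple convolution expands into eight, whose signs must be arranged so that the even and odd parts on each side match up. A related subtlety is the treatment of $\Phi_0^K$, which is the convolution unit in $K$ but not in $X$ since $f$ is not ikeo. As in the symbolic proof, the two $\Phi_0^K$-factors introduced by the S- and T-propositions sit adjacent to $\zeta^K$, and at the objective level the absorption of $\Phi_0^K$ into $\zeta^K$ amounts to the observation that the long edge of any simplex of $K$ is automatically in $K$, so the pullback defining $\Phi_0^K\conv\zeta^K$ (or $\zeta^K\conv\Phi_0^K$) reduces to the pullback defining $\zeta^K$. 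Once these parity and unit issues are settled, assembling the four equivalences in order gives the desired homotopy equivalence underlying the Crapo formula.
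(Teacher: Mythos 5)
Your proposal is correct and follows essentially the same route as the paper: the paper packages your chain of substitutions (expand $\Phi^{\cap K}_n$ by the meet lemma, rewrite the $s$- and $t$-indexed terms by the S- and T-lemmas, and reduce to the K-lemma convolved with $\Phi^{\notin K}_p$ on the left and $\Phi^{\notin K}_q$ on the right) into a single fixed-$n$ ``Key Lemma'', adds the tautological decomposition $\Phi^X_n = \Phi^{X\shortsetminus K}_n + \Phi^{\cap K}_n$, and only then sums over parities. The only organizational difference is that you parity-sum each proposition before chaining rather than once at the end, which multiplies the sign bookkeeping but changes nothing essential.
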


What it really means is
$$
\Phieven^X - \Phiodd^X =   (\Phieven^{X\shortsetminus K} - \Phiodd^{X\shortsetminus K})  
\ + \ (\Phieven^X - \Phiodd^X) * (\Phi_0^K+\Phi^K_1) * (\Phieven^X 
- \Phiodd^X)  ,
$$
and then expand and move all minus signs to the other side of the equation to
obtain finally the sign-free meaning of the theorem:
$$
\begin{array}{rr}
\Phieven^X\! &+ \ \Phieven^X \conv \Phi^K_1 \conv \Phiodd^X + 
\Phiodd^X\conv\Phi^K_1\conv\Phieven^X \\[8pt]
\Phiodd^{X\shortsetminus K}\! &+ \
\Phieven^X\conv\Phi^K_0\conv\Phiodd^X + 
\Phiodd^X\conv\Phi^K_0\conv\Phieven^X
\end{array}
\ \
=
\ \
\begin{array}{rr}
\Phieven^{X\shortsetminus K} \! &+ \ \Phieven^X\conv\Phi^K_0\conv\Phieven^X + 
\Phiodd^X\conv\Phi^K_0\conv\Phiodd^X
 \\[8pt]
\Phiodd^X\! &+ \
\Phieven^X\conv\Phi^K_1\conv\Phieven^X + 
\Phiodd^X\conv\Phi^K_1\conv\Phiodd^X  .
\end{array}
$$
This is the explicit homotopy equivalence we establish.

\begin{blanko}{Scholium.}
  $$
  \Phi^X_n = \Phi^{X\shortsetminus K}_n + \Phi^{\cap K}_n  .
  $$
\end{blanko}

From the viewpoint of $X$, this is clear: a nondegenerate $n$-simplex in $X$ either has a 
vertex in $K$ or it does not have a vertex in $K$. A simplex in $X$ without a vertex 
in $K$ is the same thing as a simplex in $X\shortsetminus K$.  We can 
therefore interpret the symbol as $g\lowershriek (\Phi^{X\shortsetminus 
K}_n)$.

\begin{lemma}[Key Lemma]
  $$
  \Phi^{\cap K}_n \ + \ \sum_{s+1+t=n} \Phi^X_s * \Phi^K_1 * \Phi^X_t
  \ = \
  \sum_{s+t=n} \Phi^X_s * \Phi^K_0 * \Phi^X_t  .
  $$
\end{lemma}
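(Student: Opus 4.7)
The plan is to realize both sides as $\infty$-groupoids of nondegenerate $n$-simplices of $X$ equipped with extra marked data, and to exhibit the equivalence via a case-split governed by the convexity of $K\subset X$. The underlying combinatorial identity is: for any $\sigma$ whose $K$-vertices form a block of size $m+1$, both sides count $m+1=1+m$ --- on the right, a marked $K$-vertex of $\sigma$; on the left, either the simplex itself once (in $\Phi^{\cap K}_n$) or one of its $m$ marked $K$-edges.

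First I would unfold the convolution products as pullbacks. The formula $\Phi^X_p\conv\Phi^X_q\simeq\Phi^X_{p+q}$ (Lemma~\ref{Phipq}), combined with the fact that $f\colon K\to X$ is semi-ikeo (Proposition~\ref{ff=>ik}) so that $f\lowershriek$ preserves $*$, and with a base-change along $K_0\into X_0$, identifies each summand $\Phi^X_s\conv\Phi^K_0\conv\Phi^X_t$ of the right-hand side with the pullback $\nondeg X_n\times_{X_0}K_0$ along the $s$-th vertex map $x_s\colon\nondeg X_n\to X_0$. Summing over $s$, the right-hand side is equivalent to the groupoid $R$ of pairs $(\sigma,j)$ with $\sigma\in\nondeg X_n$ and $x_j(\sigma)\in K_0$. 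Using in addition the fullness of $f$ (which gives $K_1\simeq X_1\times_{X_0\times X_0}(K_0\times K_0)$) together with Lemma~\ref{nondeg by edges}, each summand $\Phi^X_s\conv\Phi^K_1\conv\Phi^X_t$ of the left-hand side is likewise identified with the groupoid of pairs $(\sigma,j)$ with $\sigma\in\nondeg X_n$, $j\in\{1,\dots,n\}$, and $x_{j-1}(\sigma),x_j(\sigma)\in K_0$.

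Since $f$ is a full inclusion, $K_0\into X_0$ is mono, so there is a genuine coproduct decomposition $X_0\simeq K_0+(X_0\shortsetminus K_0)$. I would use it to partition $R=R'+R''$, where $R'$ consists of pairs $(\sigma,j)$ with $j=0$ or $x_{j-1}(\sigma)\notin K_0$, and $R''$ of pairs with $j>0$ and $x_{j-1}(\sigma)\in K_0$. By the identifications above, $R''\simeq\sum_{s+1+t=n}\Phi^X_s\conv\Phi^K_1\conv\Phi^X_t$, matching the second summand on the left.

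It remains to identify $R'$ with $\Phi^{\cap K}_n$. The convexity lemma from the previous section (which combines fullness and culfness) implies that the $K$-vertices of any simplex form a contiguous block; in particular, a nondegenerate simplex with at least one $K$-vertex has a unique first $K$-vertex. One formalises this via the coproduct decomposition
\[
R' \ \simeq\ \sum_{p=0}^n \nondeg X_n \times_{X_0^{p+1}} \bigl((X_0\shortsetminus K_0)^{p}\times K_0\bigr),
\]
under which each $\sigma$ with at least one $K$-vertex appears in exactly one summand, indexed by the position $p$ of its first $K$-vertex; this gives $R'\simeq\Phi^{\cap K}_n$. Assembling the three steps yields the stated equivalence. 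The main obstacle is this last identification, which crucially depends on the contiguity of the $K$-block: it is precisely here that the decomposition-space axiom, fullness, and culfness all come together.
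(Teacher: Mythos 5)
Your proof is correct, but it takes a genuinely different route from the paper's. The paper derives the Key Lemma in two lines from its four auxiliary lemmas: it expands $\Phi^{\cap K}_n$ by the meet lemma and the $s$- and $t$-indexed convolutions by the S- and T-lemmas, and observes that the resulting identity is exactly the K-lemma convolved with $\Phi^{\notin K}_p$ from the left and $\Phi^{\notin K}_q$ from the right. You instead argue directly at the level of groupoids of marked nondegenerate $n$-simplices: the right-hand side becomes pairs $(\sigma,j)$ with $x_j(\sigma)\in K_0$, the correction term on the left becomes pairs with $x_{j-1}(\sigma),x_j(\sigma)\in K_0$, and the case split on whether $x_{j-1}(\sigma)$ lies in $K_0$ --- together with contiguity of the $K$-block, which is where fullness and culfness enter --- identifies the remainder with $\Phi^{\cap K}_n$. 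What the paper's route buys is modularity: the meet/S/T/K-lemmas are needed anyway for the symbolic propositions, so the Key Lemma then costs nothing extra, and the $\Phi^{\notin K}$ functionals make the three-part anatomy of a simplex meeting $K$ explicit. What your route buys is transparency: the underlying bijection ($m+1=1+m$ markings per simplex meeting $K$) is visible, and the $\Phi^{\notin K}$ functionals are avoided entirely. The price is that you must inline the convolution computations that the auxiliary lemmas package; in particular, the identification of $\Phi^X_s * \Phi^K_0 * \Phi^X_t$ with the space of nondegenerate $n$-simplices whose $s$-th vertex lies in $K_0$ rests on completeness and unitality of $X$ (the degeneracy squares being pullbacks), not on semi-ikeo-ness of $f$ as you suggest --- that is the one place where your attribution of the needed input is off, although the computational fact itself is the same one the paper asserts when explaining convolution with $\Phi_0^K$.
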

This has the same overall shape as the K-lemma, but note that unlike in the 
K-lemma,
the terms on the LHS do not simply identify with those on the RHS.

\begin{proof}
  We expand the term $\Phi^{\cap K}_n$ using the meet lemma; we expand the 
  $s$-indexed terms using the S-lemma; we expand the $t$-indexed terms 
  using the T-lemma. The claim thus becomes
  $$
  \sum_{\mathclap{p+m+q=n}} \Phi^{\notin K}_p * \Phi^K_m *
  \Phi^{\notin K}_q 
  \ + \ 
  \sum_{\mathclap{p+i+1+j+q=n}} \Phi^{\notin K}_p * \Phi^K_i * \Phi^K_1 * \Phi^K_j *
  \Phi^{\notin K}_q
  \ = \
  \sum_{\mathclap{p+i+j+q=n}} \Phi^{\notin K}_p * \Phi^K_i * \Phi^K_0 * \Phi^K_j *
  \Phi^{\notin K}_q   .
  $$
But this equation is precisely the K-lemma convolved with $\Phi^{\notin K}_p $
from the left and with $\Phi^{\notin K}_q $ from the right.
\end{proof}

The following figure illustrates the relationship among the indices:
\begin{center}
  
  \colorlet{col1}{blue!8}
\colorlet{col2}{green!12}
\colorlet{col3}{red!8}

\begin{tikzpicture}
\footnotesize

	\draw (-3.9, 0.6) --++ (0, 0.1) --++ (3.6, 0) --++ (0, -0.1);
	\node at (-2.1,0.9) {\footnotesize $s$};
	\draw (3.9, 0.6) --++ (0, 0.1) --++ (-3.6, 0) --++ (0, -0.1);
	\node at (2.1,0.95) {\footnotesize $t$};

	\fill[col3] (-2.0,-0.25) rectangle ++(-1.9,0.5);
	\node at (-2.9,0) {\footnotesize $p$};
  	\fill[col1] (-0.3,-0.25) rectangle ++(-1.6,0.5);
	\node at (-1.2,0.04) {\footnotesize $i$};
	\node at (0,0) {\tiny $(0|1)$};
  	\fill[col1] (0.3,-0.25) rectangle ++(1.6,0.5);
	\node at (1.2,0) {\footnotesize $j$};
  	\fill[col3] (2.0,-0.25) rectangle ++(1.9,0.5);
	\node at (2.9,0) {\footnotesize $q$};
	
	\draw (-1.9, -0.6) --++ (0, -0.1) --++ (3.8, 0) --++ (0, 0.1);
	\node at (0.0,-0.9) {\footnotesize $m$};

	\draw (-3.9, -1.2) --++ (0, -0.1) --++ (7.8, 0) --++ (0, 0.1);
	\node at (0.0,-1.6) {\footnotesize $n$};

\end{tikzpicture}
\end{center}

\begin{proof}
  Here is an alternative proof: Start with the K-lemma:
    $$
    \Phi_m + \sum_{i+1+j=m} \Phi_i * \Phi_1 * \Phi_{j} \ = \
  \sum_{i+j=m} \Phi_i * \Phi_j  .
  $$
  Now convolve with $\Phi^{\notin K}_p $
from the left and with $\Phi^{\notin K}_q $ from the right, and sum to $n$ 
to obtain
  $$
  \sum_{\mathclap{p+m+q=n}} \Phi^{\notin K}_p * \Phi^K_m *
  \Phi^{\notin K}_q 
  \ + \ 
  \sum_{\mathclap{p+i+1+j+q=n}} \Phi^{\notin K}_p * \Phi^K_i * \Phi^K_1 * \Phi^K_j *
  \Phi^{\notin K}_q
  \ = \
  \sum_{\mathclap{p+i+j+q=n}} \Phi^{\notin K}_p * \Phi^K_i * \Phi^K_0 * \Phi^K_j *
  \Phi^{\notin K}_q  .
  $$
  The first sum on the left gives $\Phi^{\cap K}_n$ by the meet lemma.
  In the other sums, apply the S-lemma to the $s$-indexed terms and apply 
  the T-lemma to the $t$-indexed terms. Altogether we arrive at
    $$
  \Phi^{\cap K}_n
  \ + \ 
  \sum_{s+1+t=n} \Phi_s * \Phi^K_1 * \Phi_t 
  \ = \
  \sum_{s+t=n} \Phi_s * \Phi^K_0 * \Phi_t   ,
  $$
  which is what we wanted to prove.
\end{proof}

\subsection{Finiteness conditions and cardinality}
\label{sub:fin}

In order to take homotopy cardinality to deduce results at the level of 
$\Q$-algebras, some finiteness conditions must be imposed.
First of all, for the incidence (co)algebra of $X$ to admit a cardinality, 
$X$ should be locally 
finite, meaning that all active maps are finite.
Second, for the general M\"obius inversion formula to admit a cardinality, we must 
ask that for each $1$-simplex $f$, there are only finitely many non-degenerate 
$n$-simplices (any $n$) with long edge $f$. This is the M\"obius 
condition for decomposition spaces~\cite{Galvez-Kock-Tonks:1512.07577}.

We should only remark that if $X$ is a M\"obius decomposition space, 
and if $K\subset X$ is a convex subspace then also $K$ is M\"obius (this 
follows since anything culf over a M\"obius decomposition space is 
M\"obius again. We should also check whether it is true that for any full 
inclusion $Y \to X$, we have that $Y$ is M\"obius again.

\bigskip

Note that for an ikeo map to admit a cardinality (which will then be an 
algebra homomorphism) it must be a finite map. In the present case this is 
OK since the maps are even mono.

\bigskip

Recall that a simplicial space $Y$ is locally finite if all active maps are 
finite. (Note that in \cite{Galvez-Kock-Tonks:1512.07577} it was also 
demanded that $Y_1$ be locally finite, but this has turned out not to be 
necessary. The following results remain true with this extra condition, 
though.)

\begin{lemma}
  If $F: Y \to X$ is a full inclusion of simplicial spaces, and if $X$ is 
  locally finite, then also $Y$ is locally finite.
\end{lemma}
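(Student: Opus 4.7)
The plan is to show that each active map $\alpha^*_Y: Y_n \to Y_m$ induced by an active $\alpha: [m] \actto [n]$ has homotopy finite fibres, given the same for $X$. The key mechanism is that a full inclusion $f: Y \to X$ is level-wise a monomorphism, and comparing the fibres of $\alpha^*_Y$ with those of $\alpha^*_X$ through such monomorphisms transports finiteness from $X$ to $Y$.

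First I would verify that $f_n: Y_n \to X_n$ is mono for every $n$. By fully faithfulness we have a pullback
\[
\begin{tikzcd}
Y_n \ar[r, "f_n"] \ar[d] \drpullback & X_n \ar[d] \\
Y_0 \times \cdots \times Y_0 \ar[r] & X_0 \times \cdots \times X_0.
\end{tikzcd}
\]
Since $Y_0 \to X_0$ is mono by hypothesis, the product $Y_0^{n+1} \to X_0^{n+1}$ is mono, and pulling back preserves monos, so $f_n$ is mono.

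Next, for an active $\alpha: [m] \actto [n]$, form the pullback $P := Y_m \times_{X_m} X_n$. Since $f_m$ is mono, the projection $P \to X_n$ is mono. The commutative square
\[
\begin{tikzcd}
Y_n \ar[r, into, "f_n"] \ar[d, "\alpha^*_Y"'] & X_n \ar[d, "\alpha^*_X"] \\
Y_m \ar[r, into, "f_m"'] & X_m
\end{tikzcd}
\]
induces a canonical map $Y_n \to P$ over $Y_m$; since its composite with the mono $P \to X_n$ equals the mono $f_n$, this canonical map $Y_n \to P$ is itself mono. Therefore, pulling back along any $y: 1 \to Y_m$ gives a mono of homotopy fibres $\alpha^{*-1}_Y(y) \hookrightarrow 1 \times_{Y_m} P$, and by pullback pasting the latter identifies with $\alpha^{*-1}_X(f_m(y))$.

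By local finiteness of $X$, the fibre $\alpha^{*-1}_X(f_m(y))$ is homotopy finite, and since a monomorphism in $\infty$-groupoids is the inclusion of a full sub-$\infty$-groupoid (its fibres are each empty or contractible), this property is inherited by $\alpha^{*-1}_Y(y)$. Hence $\alpha^*_Y$ is a finite map, and $Y$ is locally finite. There is no real obstacle here beyond carefully tracking which maps are mono and invoking the preservation of homotopy finiteness under monomorphisms.
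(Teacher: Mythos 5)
Your proof is correct and follows essentially the same strategy as the paper's: establish that $Y_n\to X_n$ is mono in every degree (via the fully faithful pullback square and the mono $Y_0\to X_0$), exhibit the fibre of the active map on $Y$ as mapping by a mono into the corresponding fibre on $X$, and conclude homotopy finiteness. The paper packages the fibre comparison as a cube plus the prism lemma rather than your intermediate pullback $P$ and mono-cancellation, but this is only a cosmetic difference.
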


\begin{proof}
  Let $g: Y_n \to Y_1$ be the unique active map. We need to show that 
  the fibre over any $a\in Y_1$ is finite. In the cube diagram
    \[
  \begin{tikzcd}[column sep={6em,between origins}, row sep={3.5em,between
	origins}]
  & 1 \ar[ldd]\ar[d, "\name{a}"] & (Y_n)_a \ar[d]\ar[l]  \\
  & Y_1 \ar[ldd] & Y_n \ar[ldd] \ar[l, pos=0.7, "g"'] \\
  1  \ar[d, "\name{F(a)}"'] & (X_n)_{F(a)} \ar[d] \ar[l,crossing over] & \\
  X_1 & \ar[l] X_n \ar[l, "g"] &
  \ar[from=1-3,to=3-2,crossing over]
  \end{tikzcd}
  \]
  the back and front faces are pullbacks by definition of the fibres we 
  are interested in.  The left-hand face is a pullback since $Y_1 \to X_1$ 
  is mono. By the prism lemma it now follows that also the right-hand face is 
  a pullback. Finally we see that $(Y_n)_a \to (X_n)_{F(a)}$ is mono 
  because it is  a pullback of
  $Y_n \to X_n$, which is mono since $F$ is a full inclusion.
  Since $(X_n)_{F(a)}$ is finite,
  it follows that $(Y_n)_a$ is finite.
\end{proof}

Recall (from~\cite[\S 6]{Galvez-Kock-Tonks:1512.07577}) that the {\em
length} of a $1$-simplex $a\in Y_1$ is defined as the dimension of the
biggest {\em effective} $n$-simplex $\sigma$ with long edge $a$. Effective
means that all principal edges are nondegenerate. For decomposition spaces,
and more generally for so-called stiff simplicial
spaces~\cite{Galvez-Kock-Tonks:1512.07577}), this is equivalent to $\sigma$
being nondegenerate.

\begin{lemma}
  Let $Y\to X$ be a conservative simplicial map between locally finite 
  simplicial spaces. If $X$ is (complete and)
  of locally finite length, then also $Y$ is (complete and) of locally 
  finite length.
\end{lemma}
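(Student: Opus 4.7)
The plan is to handle the two parenthetical claims separately. First, to establish completeness of $Y$: conservativeness of $F\colon Y \to X$ gives in particular that
\[
\begin{tikzcd}
Y_0 \ar[d] \ar[r, "s_0"] & Y_1 \ar[d]  \\
X_0 \ar[r, "s_0"'] & X_1
\end{tikzcd}
\]
is a pullback, so completeness of $X$ (that is, $s_0\colon X_0\to X_1$ mono) transfers to $Y$ because monomorphisms are stable under pullback.

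For locally finite length, I fix $a\in Y_1$ and put $b = F_1(a)\in X_1$. The strategy is to show that $F_n\colon Y_n\to X_n$ sends effective simplices with long edge $a$ to effective simplices with long edge $b$; since $X$ has locally finite length, the dimension of such an $X$-simplex is bounded by a finite number depending on $b$, and the same bound then works for $Y$-simplices with long edge $a$. Preservation of the long edge is immediate from functoriality of $F$ applied to the inert map $[1]\inertto [n]$ picking out the long edge. Preservation of effectiveness reduces to showing that $F_1$ preserves nondegeneracy of $1$-simplices, and this is the key step: given a nondegenerate $e\in Y_1$, if $F_1(e) = s_0(x)$ for some $x\in X_0$, then the pullback square above produces a lift $y\in Y_0$ with $s_0(y)=e$, contradicting nondegeneracy of $e$.

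The entire argument rests on this single use of the $s_0$-instance of conservativeness, which is exactly the pullback property needed to transport nondegeneracy from $Y$ to $X$. I do not expect any substantial obstacle: no higher-degeneracy squares are invoked, and the structure is strictly easier than the preceding lemma on local finiteness (where full inclusion and the prism lemma were combined). One point worth noting is that for general simplicial spaces the relevant notion is \emph{effective} rather than \emph{nondegenerate}, but this only simplifies matters, since effectiveness is tested one principal edge at a time via the inert maps $\rho_i\colon[1]\inertto[n]$, and principal edges are preserved by functoriality of $F$.
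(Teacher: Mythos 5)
Your proof is correct and follows essentially the same route as the paper's: completeness transfers because conservativeness makes $s_0^Y$ a pullback of the mono $s_0^X$, and locally finite length transfers because effective simplices (and their long edges) are preserved, which the paper also attributes directly to conservativeness. The only difference is presentational --- you bound the dimension directly where the paper argues by contraposition --- and you usefully spell out the one detail the paper leaves implicit, namely that preservation of effectiveness reduces via the principal-edge maps to the $s_0$-instance of the cartesianness condition.
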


\begin{proof}
  Note first that if $X$ is complete, then so is $Y$, since the map is
  conservative. If $Y$ were not of locally finite length, that would
  mean there is a $1$-simplex $a\in Y_1$ for which $(\nondeg Y_n)_a$ is 
  nonempty for all $n$. (This is not the definition of locally finite 
  length, but for locally finite simplicial spaces this is equivalent.)
  But each $\sigma\in (\nondeg Y_n)_a$ witnessing this nonemptiness
  is sent to $f\sigma \in (\nondeg X_n)_{fa}$ witnessing also infinite
length of $fa$. Note that effective simplices are preserved, as a
consequence of being conservative.
\end{proof}

\begin{cor}
  If $Y \to X$ is a full inclusion of simplicial spaces, and if $X$ is a 
  M\"obius decomposition space, then also $Y$ is a M\"obius decomposition 
  space.
\end{cor}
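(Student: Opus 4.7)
The plan is to assemble the corollary directly from the results that have just been proved in this subsection together with the general theory of semi-ikeo full inclusions developed earlier. Being a Möbius decomposition space means: being a decomposition space, being complete, being locally finite, and being of locally finite length. So I need to verify each of these four properties for $Y$, given that they hold for $X$.

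First I would observe that a full inclusion $Y \to X$ is semi-ikeo, by Proposition~\ref{ff=>ik}, so by Proposition~\ref{semiikeo / decomp is decomp} applied to the decomposition space $X$, we deduce that $Y$ is a decomposition space. Then I would invoke Corollary~\ref{full=>complete} to conclude that $Y$ is complete, since $X$ is. Next, the preceding lemma (full inclusions preserve local finiteness) gives that $Y$ is locally finite, since $X$ is.

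It remains to show that $Y$ is of locally finite length. For this I would use the second preceding lemma: it says that a conservative simplicial map between locally finite simplicial spaces, with target complete and of locally finite length, has source of locally finite length. We have already shown that a full inclusion is conservative (the lemma just before Corollary~\ref{full=>complete}); and we have just shown that both $X$ and $Y$ are complete and locally finite. Since $X$ is Möbius, it is of locally finite length, and the lemma then yields that $Y$ is of locally finite length as well.

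No genuine obstacle arises: the corollary is essentially a bookkeeping exercise combining Propositions~\ref{ff=>ik} and~\ref{semiikeo / decomp is decomp}, Corollary~\ref{full=>complete}, and the two finiteness lemmas of this subsection. The only small point to keep in mind is that to apply the second lemma one needs to have already established local finiteness of $Y$ and completeness of $Y$; so the order of the four steps (decomposition space, completeness, local finiteness, locally finite length) is forced.
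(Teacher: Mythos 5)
Your proof is correct and is essentially the argument the paper intends: the corollary is stated without proof precisely because it is the combination of Proposition~\ref{ff=>ik} with Proposition~\ref{semiikeo / decomp is decomp} (decomposition space), Corollary~\ref{full=>complete} (completeness), and the two finiteness lemmas immediately preceding it (local finiteness, then locally finite length via conservativity of full inclusions). Your remark that the order of the steps is forced, since the last lemma needs $Y$ already known to be locally finite and complete, is a correct and worthwhile observation.
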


With these preparations we see that in the Crapo formula, if just the 
ambient decomposition space $X$ is M\"obius, then also $K$ and 
$X\shortsetminus K$ are M\"obius, so that all the objects in the formula 
admit a cardinality. The formula therefore holds at the level of 
$\Q$-vector spaces.

\footnotesize

\noindent
\noindent
Universidad de M\'alaga, IMTECH-UPC, and Centre de Recerca Matemàtica\\
E-mail address: \texttt{imma.galvez@uma.es}

\bigskip

\noindent
University of Copenhagen, Universitat Aut\`onoma de Barcelona, and Centre de Recerca Matem\`atica  \\
E-mail address: \texttt{kock@math.ku.dk}

\bigskip

\noindent
Universidad de M\'alaga\\
E-mail address: \texttt{at@uma.es}

\begin{thebibliography}{10}

\bibitem{Berger:Adv2002}
{\sc Clemens Berger}.
\newblock {\em A cellular nerve for higher categories}.
\newblock Adv. Math. {\bf 169} (2002), 118--175.

\bibitem{Bjoerner-Walker}
{\sc Anders Bj\"{o}rner {\rm and }James~W. Walker}.
\newblock {\em A homotopy complementation formula for partially ordered sets}.
\newblock European J. Combin. {\bf 4} (1983), 11--19.

\bibitem{Carlier:1801.07504}
{\sc Louis Carlier}.
\newblock {\em Incidence bicomodules, {M}\"{o}bius inversion and a {R}ota
  formula for infinity adjunctions}.
\newblock Algebr. Geom. Topol. {\bf 20} (2020), 169--213.
\newblock ArXiv:1801.07504.

\bibitem{Content-Lemay-Leroux}
{\sc Mireille Content, Fran{\c{c}}ois Lemay, {\rm and }Pierre Leroux}.
\newblock {\em Cat\'egories de {M}\"obius et fonctorialit\'es: un cadre
  g\'en\'eral pour l'inversion de {M}\"obius}.
\newblock J. Combin. Theory Ser. A {\bf 28} (1980), 169--190.

\bibitem{Crapo}
{\sc Henry~H. Crapo}.
\newblock {\em The {M}\"obius function of a lattice}.
\newblock J. Combin. Theory {\bf 1} (1966), 126--131.

\bibitem{Dur:1986}
{\sc Arne D{\"u}r}.
\newblock {\em M\"obius functions, incidence algebras and power series
  representations}, vol. 1202 of Lecture Notes in Mathematics.
\newblock Springer-Verlag, Berlin, 1986.

\bibitem{Dyckerhoff-Kapranov:1212.3563}
{\sc Tobias Dyckerhoff {\rm and }Mikhail Kapranov}.
\newblock {\em Higher {S}egal spaces}, vol. 2244 of Lecture Notes in
  Mathematics.
\newblock Springer, Cham, 2019.
\newblock ArXiv:1212.3563.

\bibitem{Feller-Garner-Kock-Proulx-Weber:1905.09580}
{\sc Matthew Feller, Richard Garner, Joachim Kock, May~U. Proulx, {\rm and
  }Mark Weber}.
\newblock {\em Every 2-Segal space is unital}.
\newblock Commun. Contemp. Math. {\bf 23} (2021), 2050055.
\newblock ArXiv:1905.09580.

\bibitem{Galvez-Kock-Tonks:1612.09225}
{\sc Imma G{\'a}lvez-Carrillo, Joachim Kock, {\rm and }Andrew Tonks}.
\newblock {\em Decomposition spaces in combinatorics}.
\newblock Preprint, arXiv:1612.09225.

\bibitem{Galvez-Kock-Tonks:1512.07573}
{\sc Imma G{\'a}lvez-Carrillo, Joachim Kock, {\rm and }Andrew Tonks}.
\newblock {\em Decomposition spaces, incidence algebras and {M}\"{o}bius
  inversion {I}: {B}asic theory}.
\newblock Adv. Math. {\bf 331} (2018), 952--1015.
\newblock ArXiv:1512.07573.

\bibitem{Galvez-Kock-Tonks:1512.07577}
{\sc Imma G{\'a}lvez-Carrillo, Joachim Kock, {\rm and }Andrew Tonks}.
\newblock {\em Decomposition spaces, incidence algebras and {M}\"{o}bius
  inversion {II}: {C}ompleteness, length filtration, and finiteness}.
\newblock Adv. Math. {\bf 333} (2018), 1242--1292.
\newblock ArXiv:1512.07577.

\bibitem{Galvez-Kock-Tonks:1512.07580}
{\sc Imma G{\'a}lvez-Carrillo, Joachim Kock, {\rm and }Andrew Tonks}.
\newblock {\em Decomposition spaces, incidence algebras and {M}\"{o}bius
  inversion {III}: {T}he decomposition space of {M}\"{o}bius intervals}.
\newblock Adv. Math. {\bf 334} (2018), 544--584.
\newblock ArXiv:1512.07580.

\bibitem{Galvez-Kock-Tonks:1602.05082}
{\sc Imma G{\'a}lvez-Carrillo, Joachim Kock, {\rm and }Andrew Tonks}.
\newblock {\em Homotopy linear algebra}.
\newblock Proc. Royal Soc. Edinburgh A {\bf 148} (2018), 293--325.
\newblock ArXiv:1602.05082.

\bibitem{Hackney-Kock:2210.11191}
{\sc Philip Hackney {\rm and }Joachim Kock}.
\newblock {\em Culf maps and edgewise subdivision}.
 With an appendix coauthored with Jan
  Steinebrunner.
  \newblock Preprint, arXiv:2210.11191.

\bibitem{JoniRotaMR544721}
{\sc Saj-nicole~A. Joni {\rm and }Gian-Carlo Rota}.
\newblock {\em Coalgebras and bialgebras in combinatorics}.
\newblock Stud. Appl. Math. {\bf 61} (1979), 93--139.

\bibitem{LawvereMenniMR2720184}
{\sc F.~William Lawvere {\rm and }Mat{\'\i}as Menni}.
\newblock {\em The {H}opf algebra of {M}\"obius intervals}.
\newblock Theory Appl. Categ. {\bf 24} (2010), No. 10, 221--265.

\bibitem{Leroux:1976}
{\sc Pierre Leroux}.
\newblock {\em Les cat{\'e}gories de {M}{\"o}bius}.
\newblock Cahiers Topologie G{\'e}om. Diff{\'e}rentielle {\bf 16} (1976),
  280--282.

\bibitem{Lurie:HTT}
{\sc Jacob Lurie}.
\newblock {\em Higher topos theory}, vol. 170 of Annals of Mathematics Studies.
\newblock Princeton University Press, Princeton, NJ, 2009.
\newblock ArXiv:math/0608040.

\bibitem{Rota:Moebius}
{\sc Gian-Carlo Rota}.
\newblock {\em On the foundations of combinatorial theory. {I}. {T}heory of
  {M}\"obius functions}.
\newblock Z. Wahrscheinlichkeitstheorie und Verw. Gebiete {\bf 2} (1964),
  340--368.

\end{thebibliography}
\end{document}